\newcommand{\rulesep}{\unskip\ \vrule\ }
\newtheorem{prop}{Proposition}[section]
\newtheorem{conj}[prop]{Conjecture}
\newtheorem{coro}[prop]{Corollary}
\newtheorem{lemm}[prop]{Lemma}
\newtheorem{pf-thm}[prop]{proof of theorem}
\newtheorem{theo}[prop]{Theorem}
\newtheorem{prob}[prop]{Problem}
\newtheorem{mainlemm}[prop]{Main Lemma}
\newtheorem*{ack}{Acknowledgments}
\theoremstyle{definition}
\newtheorem{defi}[prop]{Definition}
\newtheorem{exam}[prop]{Example}
\newtheorem{rema}[prop]{Remark}
\def\RR{\mathbb{R}}
\def\bL{\mathbf{L}}
\def\bx{\mathbf{x}}
\def\diag{\mathrm{diag}}
\def\bP{\boldsymbol{\mathrm{P}}}
\def\bL{\boldsymbol{\mathrm{L}}}
\def\bY{\boldsymbol{\mathrm{Y}}}
\def\bZ{\boldsymbol{\mathrm{Z}}}
\def\PRs{\boldsymbol{\mathrm{P}}(\mathbb{R}_{\mathrm{sym}}^{n\times n})}
\def\RPn{\mathbb{R}\boldsymbol{\mathrm{P}}^n}
\def\RP{\mathbb{R}\boldsymbol{\mathrm{P}}}
\def\Rsn{\mathbb{R}_{\mathrm{sym}}^{n\times n}}
\def\EC{\EuScript{C}}
\def\sym{\mathrm{Sym}}
\def\Om{\Omega}
\DeclarePairedDelimiterX{\norm}[1]{\lVert}{\rVert}{#1}
\numberwithin{equation}{section}
\title{A Decomposition Lemma in Convex Integration via Classical Algebraic Geometry}
\date{}
\author{ Zhitong Su, Weijun Zhang}
\newcommand{\Addresses}{{

  \bigskip
  \footnotesize
  \begin{flushright}Zhitong Su\\ 
  \textsc{MOE-LCSM\\ School of Mathematics and Statistics\\ Hunan Normal University\\ Changsha 410081, P. R. China}\par\nopagebreak
  \textit{E-mail address}:  \texttt{suzht@hunnu.edu.cn}\end{flushright}

 \bigskip
  \footnotesize
  \begin{flushright}Weijun Zhang\\ 
  \textsc{School of Mathematics and Statistics\\ Nanfang College, Guangzhou\\ Guangzhou 510970, P.R. China}\par\nopagebreak
  \textit{E-mail address}:  \texttt{zwj2017@amss.ac.cn}\end{flushright}

}}
\begin{document}

\maketitle
\begin{abstract}

In this paper, we prove a decomposition lemma for symmetric matrix fields on bounded domains:
$ D+\sym\nabla\Phi=\sum_i a_i^2\xi_i\otimes\xi_i$
with uniform control on $\Phi$ and $a_i^2$, using  fewer than the usual $n(n+1)/2$ rank-one symmetric terms. Except possibly in dimensions $n=8,16$, the decomposition is shown to be
optimal through algebraic arguments. This reduces the number of steps in convex integration for a nonlinear PDE system, improving H\"older regularity of flexible solutions in dimension $n\ge3$. This PDE is a partial linearization of the codimension-one local isometric embedding equation in the Nash–Kuiper theorem, and also yields improved regularity for very weak solutions of related $2$D Monge--Amp\'ere and $2$-Hessian systems. The improved H\"older exponent is any $\alpha<(n^2+1)^{-1}$ for $n=2,4,8,16$ and any $\alpha<(n^2+n-2\rho(n/2)-1)^{-1}$ otherwise, where $\rho$ is the Radon--Hurwitz number,  related to Bott periodicity.

The proof involves novel applications of  algebraic geometry and topology that yield the optimality of decomposition,  including Adams' theorem on vector fields on spheres,  intersections of projective
varieties, and projective duality, combined with an elliptic method that avoids loss of differentiability.

\end{abstract}

\setcounter{section}{0}

\section{Introduction}
The interplay between flexibility and rigidity is a theme across multiple disciplines in modern mathematics. The  phenomenon of flexibility in analytic problems was first discovered by Nash in his celebrated work \cite{Nash_C1} on 
$C^1$ isometric embeddings. To provide a better understanding of such phenomena and establish a general framework for solving flexible analytic problems, Gromov introduced the \textit{h-principle} (see \cite{gromov1986partial,Cieliebak_Eliashberg_Mishachev2024introduction}), and reformulated Nash's idea into the method of \textit{convex integration}, which is applicable to a broader class of problems.\par

Motivated by this flexible behavior and the convex integration framework, this paper focuses on the following nonlinear equations on a bounded domain $\Omega\subset\mathbb{R}^n$ with smooth boundary:

\begin{equation}\label{ddagger}
    \begin{aligned}
        &\text{Find}\quad  v:\Omega\longrightarrow \mathbb{R}, \quad w: \Omega \longrightarrow \mathbb{R}^n\quad \text{satisfying}\\
&\quad \frac{1}{2} \nabla v\otimes \nabla v+\sym\nabla w=A,
    \end{aligned}\tag{$\ddagger$}
\end{equation}
with given $A:\Omega\longrightarrow \Rsn$.\par 
Equation \eqref{ddagger} has a close relation with the codimension-one local isometric embedding equation in the famous Nash--Kuiper \cite{Nash_C1,Kuiper1955OnC1_I&II} theorem:
\begin{equation}\label{NK}
\begin{aligned}
   \text{Fi}&\text{nd }u:\Omega\longrightarrow \mathbb{R}^{n+1} \quad \text{ satisfying}\\
   &\quad \nabla u\otimes\nabla u=g \quad\text{on } \Omega 
\end{aligned}
    \tag{NK}
\end{equation}
with $g$ being a Riemannian metric of $\Omega$ viewed as a $(0,2)$ type tensor. In fact, consider a perturbation  $(g_{ij})(t)=I_n+2t^2A+o(t^2)$ as $t\rightarrow 0$, and let $u=[x_1+t^2w_1,\cdots,x_n+t^2w_n,tv]$. Then, the $t^2$ terms in \eqref{NK} reduce to equation \eqref{ddagger}; in this sense, the equation \eqref{ddagger} can be viewed as a partial linearization of local isometric embedding equation \eqref{NK}. See \cite{Daniel_Hornung, lewicka2025monge} for related discussions. \par

One classical example on the rigidity side of isometric embedding is a rigidity result due to Cohn-Vossen and Herglotz  on Weyl's problem:   for $(S^2,g)$ with positive Gaussian curvature, its isometric  embedding into $\mathbb{R}^3$ is unique up to rigid motions.
 In the 1950s, Borisov extended the rigidity results to the embeddings of $C^{1,\alpha}$ for  $\alpha>\frac{2}{3}$ (see  \cite[Section 2.3]{han_isometric_2023}). On the flexibility side, in 2012, Conti, De Lellis, and Sz\'ekelyhidi showed in \cite{conti_h-principle_2012} that for any $\alpha<\frac{1}{n^2+n+1}$, the equation \eqref{NK} admits $C^{1,\alpha}$ flexible solutions in the sense that, for any $u^\flat\in C^0(\overline{\Omega},\mathbb{R}^{n+1})$ and $\epsilon>0$, there exists a solution $u\in C^{1,\alpha}(\overline{\Omega},\mathbb{R}^{n+1})$ to \eqref{NK} such that $\|u-u^\flat\|_0<\epsilon$, as in the Nash--Kuiper theorem. 
In \cite{DeLellis2015ANT}, the H\"older exponent was further improved to $\alpha<\frac{1}{5}$ in the case $n=2$. For detailed discussions, we refer to the monograph of Han and Hong
\cite{hanHong2006isometric} and the article of Han and Lin
\cite{hanlin2008isometric}. Very recently, in \cite{cao2025nashkuipertheoremisometricimmersions}, Cao, Hirsch, and Inauen made a breakthrough to push the highest exponent to $\alpha<\frac{1}{n^2-n+1}$. The above results are obtained thanks to  the modified convex integration method  originating in Nash's work. \par
The precise threshold between flexibility and rigidity in terms of $\alpha$ remains unknown; it was posed as Question 27 in Yau's Open Problems in Geometry \cite{YAU1993OPEN}. Convex integration for $C^{1,\alpha}$ isometric immersions parallels the scheme used in Onsager’s conjecture \cite{DeLellisICCM}, suggesting $\alpha<\frac{1}{3}$ as a possible threshold for flexibility \cite[Section 1]{cao2025nashkuipertheoremisometricimmersions}. By contrast, Gromov \cite[Question 39]{Gromov_on_nash}, as well as De Lellis and Sz\'ekelyhidi \cite[Section 10.1]{DeLellis_Szekelyhidi_on_nash}, conjectured the critical value to be $\alpha=\frac{1}{2}$.\par

Motivated by obtaining  flexible solutions with higher-regularity to \eqref{ddagger} and \eqref{NK}, we use an elliptic system to reduce the number of rank-one matrices in the decomposition, thereby decreasing the steps in each stage of convex integration and improving regularity.

Before stating the main lemma, some necessary notions are introduced.  We denote by $\mathbb{R}^{n\times n}$  the space of all $n\times n$ matrices, and  $\Rsn$  the space of all symmetric ones, and denote by $\sym (A):=\frac{1}{2}(A+A^T)$ for any $A\in \mathbb{R}^{n\times n}$. Recall that any rank-one symmetric matrix is of the form $\xi\otimes\xi$ for $\xi\in\mathbb{R}^n$. More importantly, we introduce the following integer $\Xi_n$, which represents the number of
rank-one terms in the decomposition in our paper:
\begin{equation}\label{defi of Xi}
\Xi_n:=\begin{cases}
\frac{n(n+1)}{2}-\rho(\frac{1}{2}n)    & \text{for }  n=2, 4,8,16,\\
\frac{n(n+1)}{2}-\rho(\frac{1}{2}n)-1 & \text{for other }n\in \mathbb{Z}_{\geq 2},
    \end{cases}    
\end{equation}
where $\rho(\frac{1}{2}n)$ is the Radon--Hurwitz number, defined as follows. 
\begin{defi}\label{R-H number defi}
    For $n\in\mathbb{Z}_{\geq1}$, write $n=2^{4a+b}(2c+1)$ with $a,b,c\in \mathbb{Z}_{\geq 0}$ and $3\geq b\geq 0$. Then \textbf{Radon--Hurwitz number $\rho(n)$} is defined by
    \begin{equation}
        \rho(n):=8a+2^b.
    \end{equation}
    When $\rho(\frac{1}{2} n)$ appears and $n$ is odd, we use the convention $\rho(\frac{1}{2} n)=0$ throughout.
\end{defi}
  
\begin{table}[h!]
    \centering
    \begin{tabular}{ c|c c c c c c c c c c c} 
 $n$&$ 1 $& $2$ & $3$ & $4$& $5$& $6$& $7$& $8$& $\cdots$&$16$& $\cdots$ \\ 
 \hline
$\rho(n)$&$ 1 $& $2$ & $1$ & $4$& $1$& $2$& $1$& $8$& $\cdots$&$9$& $\cdots$ \\ 

 $\frac{n(n+1)}{2}$&  & $3$ & $6$ & $10$& $15$& $21$& $28$& $36$& $\cdots$&$136$& $\cdots$ \\
\hline
$\Xi_n$&$  $& $2$ & $5$ & $8 $& $14$& $19$& $27$& $32$& $\cdots$&$128$& $\cdots$
\end{tabular}
    \caption{First few values of $\rho(n)$ and $\Xi_n$.}
   
\end{table}

It is noteworthy that $\rho(n)$ encodes an $8$-fold periodicity as $$\rho(16n)=\rho(n)+8.$$

The main ingredient of this paper is the following decomposition lemma, which underlies the convex integration scheme.
\begin{mainlemm}[Decomposition Lemma]\label{Decomposition Lemma}
  Let $\Omega\subset\mathbb{R}^n$ be a bounded open domain  with smooth boundary, let $\Xi_n$ be the integer defined in \eqref{defi of Xi}, and let $j\in \mathbb{Z}_{\geq 0}$, $0<\alpha<1$.  Then, there exist unit vectors $\xi_i\in \mathbb{R}^n$ for $1\leq i \leq \Xi_n$ such that, for every $D\in C^{j,\alpha}(\overline{\Omega},\mathbb{R}_{\mathrm{sym}}^{n\times n})$, there exist a vector field $\Phi\in C^{j+1,\alpha}(\overline{\Omega},\mathbb{R}^n)$ and scalar functions $a_i\in C^{j,\alpha}(\overline{\Omega},\mathbb{R})$ satisfying
\begin{equation}\label{decomp1}
D+\sym\nabla\Phi=\sum_{i=1}^{\Xi_n} a_i^2 \xi_i \otimes \xi_i.
\end{equation}
Furthermore, the following estimate holds for a constant $M>0$ depending only on $j,\alpha,n$ and $\Omega$:
\begin{align}\label{equ-main lemma estimate}
 \lVert \sum_{i=1}^{\Xi_n} a_i^2 \xi_i \otimes \xi_i\rVert_{j,\alpha}+ \lVert  \Phi \rVert_{j+1,\alpha} \leq M\lVert D\rVert_{j,\alpha}.
\end{align}
Moreover, for all $n \notin \{8, 16\}$, $\Xi_n$ is the minimal integer for which such  estimates hold.
 
\end{mainlemm}
Nash \cite{Nash_C1}  decomposed a Riemannian metric as $(g_{ij})=\sum_{i=1}^{n(n+1)/2} a_i^2\xi_i\otimes \xi_i$, which corresponds to setting $\Phi=0$ in the preceding lemma. Gromov discussed such decompositions in \cite[Section 3.8]{Gromov_on_nash}, posing the problem of finding minimal decompositions as Question 60.\par
To prove the 
Decomposition Lemma \ref{Decomposition Lemma}, we construct elliptic systems of size at most $n\times n$ and define $\Phi$ through specific derivatives of their solutions (Lemma \ref{elimination lemma}), thereby eliminating certain rank-one symmetric matrices $a_i^2\xi_i\otimes\xi_i$. The utilization of elliptic systems has the major advantage of avoiding loss of differentiability. 

An interesting aspect of minimizing $\Xi_n$ in the lemma is that it involves classical structures from algebraic geometry and algebraic topology. In fact, the minimality of $\Xi_n$ reflects an algebraic obstruction to further elliptic reduction. These include \textit{projective duality} and the $8$-fold periodicity of the Radon--Hurwitz number, which is related to \textit{Bott periodicity}.  These structures also account for the periodic behavior of $\Xi_n$ in the resulting regularity exponent. We discuss this perspective in Section \ref{introduction decomposition}.

Applying the decomposition lemma within the standard convex integration scheme  leads to the following regularity result, which is a direct consequence of Main Lemma \ref{Decomposition Lemma}.

\begin{theo}\label{main theorem}
Let $n\geq 2$, $0<\beta<1$, $\Omega\subset\mathbb{R}^n$ be a bounded domain with smooth boundary. Given a function $v^\flat\in C^0(\overline{\Omega})$, a vector field $w^\flat\in C^0(\overline{\Omega},\mathbb{R}^n)$ and a matrix field $A\in C^{2,\beta}(\overline{\Omega},\Rsn)$,   for every $\epsilon>0$ and every
\begin{equation*}
0<\alpha<\frac{1}{1+2\Xi_n},
\end{equation*}
there exist $v\in C^{1,\alpha}(\overline{\Omega})$ and $w\in C^{1,\alpha}(\overline{\Omega},\mathbb{R}^n)$ such that 
\begin{equation}
    \begin{gathered}
        \|v-v^\flat\|_0\leq \epsilon, \quad\, \|w-w^\flat\|_0\leq \epsilon,\\
        \frac{1}{2}\nabla v\otimes \nabla v+\sym\nabla w=A.
    \end{gathered}
    \tag{$\ddagger$}
\end{equation}
\end{theo}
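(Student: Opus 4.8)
The plan is to run the standard Nash--Kuiper style convex integration scheme for the system $(\ddagger)$, feeding it the Main Lemma \ref{Decomposition Lemma} as the device that performs each ``stage.'' First I would set up the iteration: one builds a sequence of subsolutions $(v_q, w_q)$ together with error fields $D_q := A - \tfrac12\nabla v_q\otimes\nabla v_q - \sym\nabla w_q$, with $D_q$ positive definite (arranging positivity is harmless after absorbing a fixed multiple of the identity into $A$ and adding a preliminary step, exactly as in \cite{conti_h-principle_2012}). At each stage the current error $D_q$ is decomposed, via the Main Lemma applied with $j$ chosen according to the mollification scheme, as $D_q + \sym\nabla\Phi_q = \sum_{i=1}^{\Xi_n} a_i^2\,\xi_i\otimes\xi_i$; the term $\sym\nabla\Phi_q$ is absorbed into $w_q$ at no regularity cost (this is the whole point of the elliptic construction — no loss of derivatives), and then the $\Xi_n$ primitive matrices $a_i^2\xi_i\otimes\xi_i$ are cancelled one at a time by $\Xi_n$ Nash spirals (Kuiper corrugations) inserted at geometrically increasing frequencies $\lambda_q$. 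Because each stage now costs only $\Xi_n$ steps instead of $\tfrac{n(n+1)}{2}$, the frequency growth per stage is governed by $\Xi_n$, and the usual bookkeeping forces the H\"older exponent threshold $\alpha < \tfrac{1}{1+2\Xi_n}$.

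The technical core, which I would only sketch since it is entirely routine in this literature, consists of the per-step estimates. A single corrugation step in direction $\xi$ with amplitude $a$ and frequency $\lambda$ adds to $v$ a term of size $\|a\|_0/\lambda$ in $C^0$ and $\|a\|_0$ in $C^1$, and to $w$ a companion correction; it reduces the error by $a^2\xi\otimes\xi$ up to a quadratic commutator term of size $O(\|a\|_1^2/\lambda)$ plus mollification errors. Iterating the $\Xi_n$ steps of one stage and then passing to the next stage, one chooses the frequencies as $\lambda_{q+1}=\lambda_q^b$ for a base $b>1$ and the mollification parameters correspondingly; the Main Lemma's two estimates $\|\sum a_i^2\xi_i\otimes\xi_i\|_\alpha+\|\Phi\|_{1,\alpha}\le M_1\|D\|_\alpha$ and $[\sum a_i^2\xi_i\otimes\xi_i]_{j,\alpha}+[\Phi]_{j+1,\alpha}\le M_2\|D\|_{j+\alpha}$ are exactly what is needed to propagate the inductive bounds on $\|v_q\|_1$, $\|w_q\|_1$ and $\|D_q\|_0$ (and their higher norms) with constants independent of $q$. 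Summability of the $C^{1,\alpha}$ increments for $\alpha<\tfrac{1}{1+2\Xi_n}$ yields a limit $(v,w)\in C^{1,\alpha}(\overline\Omega)\times C^{1,\alpha}(\overline\Omega,\mathbb{R}^n)$ with $D_q\to 0$, hence an exact solution; the $C^0$-closeness to the prescribed $(v^\flat,w^\flat)$ is obtained, as usual, by making the very first step small and all subsequent $C^0$-corrections summably small.

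One point that genuinely needs care, rather than being pure boilerplate, is the compatibility between the regularity index $j$ demanded by the Main Lemma and the mollification scheme: the decomposition lemma is stated for $D\in C^{j,\alpha}$ and outputs $\Phi\in C^{j+1,\alpha}$ with estimates controlled by $\|D\|_{j+\alpha}$, so in the iteration one applies it to a mollified error $D_q * \varphi_{\ell_q}$, and one must check that the elliptic-solution-derived $\Phi_q$ does not spoil the frequency separation between consecutive stages. This is where I expect the only real obstacle: ensuring that absorbing $\sym\nabla\Phi_q$ into $w_q$ is compatible with the positivity requirement on the next error and with the $C^0$-control, i.e. verifying that $\Phi_q$ is small in $C^1$ relative to the corrugation amplitudes. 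The Main Lemma's first estimate, with $\|D_q\|_0$ already tiny by induction, is precisely designed to give this, so the obstacle is bookkeeping rather than conceptual. Everything else — existence of the limit, the exact equation in the limit, boundary behaviour on the $C^2$ domain $\Omega$ — follows verbatim from \cite{conti_h-principle_2012,DeLellis2015ANT} with $\tfrac{n(n+1)}{2}$ replaced throughout by $\Xi_n$.
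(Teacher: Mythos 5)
Your proposal follows the paper's own route: Nash--Kuiper stages, each consisting of mollification, application of the Main Lemma to rewrite the mollified error as $\sym\nabla\Phi + \sum_{i=1}^{\Xi_n} a_i^2\,\xi_i\otimes\xi_i$, absorption of $\Phi$ into $w$, and $\Xi_n$ corrugation steps; the frequency/amplitude bookkeeping with $\delta_q = a^{-b^q}$, $\lambda_q = a^{cb^q}$ then yields exactly $\alpha < (1+2\Xi_n)^{-1}$. This is the same argument as Section 4 of the paper.

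One remark on the point you flag as the ``real obstacle'': you assume $D_q$ must be kept positive definite and worry that $\sym\nabla\Phi_q$ might spoil positivity at the next stage. In the paper's scheme this issue does not arise, because the nonnegative coefficient lemma (Lemma~\ref{nonnegative coefficient lemma}) is designed precisely to accept an \emph{arbitrary} $\hat D\in C^{j,\alpha}(\overline\Omega,L^\vee)$ and still produce nonnegative coefficients $a_i^2 = \hat a_i + 2\sigma_0 \ge 0$, absorbing the added constant $2\sigma_0\sum_i\xi_i\otimes\xi_i$ into a linear correction $\hat\Phi$ (a globally defined polynomial of degree one). Consequently the paper's $D_q$ in Proposition~\ref{prop:induction} carries no $-\delta_{q+1}\mathrm{Id}$ term and no positivity hypothesis; only the smallness bound $\|D_q\|_0\le\delta_{q+1}$ is needed. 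Your alternative of enforcing positivity by a preliminary rescaling as in \cite{conti_h-principle_2012} would also work, but it is an unnecessary detour; the Main Lemma already removes this constraint, which is one of the (small) simplifications it buys.
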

The previously known  regularity thresholds for such flexible solutions to \eqref{ddagger} are
\begin{equation*}
     \alpha<\begin{cases}
        \frac{1}{n^2+n+1} &\text{ for } n\geq 3\text{ in  \cite{lewicka2025monge}}, \\
        \frac{1}{3}& \text{ for } n=2 \text{ in \cite{cao_hirsch_inauen_2025}}.
    \end{cases}
\end{equation*} 
Thus, our results improve the known regularity for every $n\geq 3$. In fact, the same reduction in the number of rank-one symmetric matrices is
compatible with the standard convex integration scheme for \eqref{NK}; see Remark \ref{rema-on NK} and Remark \ref{rema-on recent Cao} for a discussion.

Equation \eqref{ddagger} moreover attracts interest due to its relation to the \textit{very weak solutions} of several nonlinear equations, a connection recently observed in several works.  In their seminal work \cite{lewicka_convex_2017}, Lewicka and Pakzad first noticed that in dimension $n=2$, by applying $curl\,curl$ to both sides, \eqref{ddagger} is reduced to \textit{Monge--Amp\`ere equation} (see also \cite{Daniel_Hornung}):
\begin{equation}
    \mathfrak{Det}\nabla^2v:=-\frac{1}{2}curl \ curl(\nabla v\otimes\nabla v)=f.
\end{equation}
 Subsequently, it was followed by \cite{cao_very_2019,cao_hirsch_inauen_2025} to improve the regularity of the solutions when $n=2$. Equation \eqref{ddagger} was further studied in various settings, including higher-dimensional ones, by Lewicka in \cite{lewicka2025-2monge,lewicka2025monge,lewicka2025-3monge}, by Inauen and Lewicka in \cite{inauen2025monge}, and by Cao, Hirsch, Inauen, and Lewicka in \cite{cao2026full}.
In particular, it is observed that, when $n=2$, the left-hand side of \eqref{ddagger} corresponds to the \textit{von K\'arm\'an content}; see also \cite{Daniel_Hornung}, where \eqref{ddagger} is treated in dimension $2$ as the \textit{von K\'arm\'an system}.  Applying  $\mathfrak{C}^2$ on both sides, where $\mathfrak{C}^2(A)_{ij,st}:=\partial_i\partial_sA_{jt}+\partial_j\partial_tA_{is}-\partial_i\partial_tA_{js}-\partial_j\partial_sA_{it} $ for any $A:\Omega\rightarrow \Rsn$,
 one obtains what is referred to as the \textit{Monge--Amp\`ere systems}: 
\begin{equation}
    \mathfrak{Det}\nabla^2v:=[\partial_i\partial_jv\cdot\partial_s\partial_tv-\partial_i\partial_tv\cdot\partial_j\partial_sv]_{ij,st:1\cdots n}=-\mathfrak{C}^2(A),
\end{equation}
which is shown to be equivalent to the problem \eqref{ddagger}, up to regularity issues, as discussed in \cite[Section 1.3]{lewicka2025monge}. In \cite{li_Qiu_very_2024}, Li and Qiu applied the operator $\mathfrak{L}(A):=\sum_{i,j}\partial_i\partial_iA_{jj}+\partial_j\partial_jA_{ii}-2\partial_i\partial_jA_{ij}$, defined for any $A:\Omega\rightarrow \Rsn$, to both sides of \eqref{ddagger} in order to study the $2$-Hessian equation in arbitrary dimension $n$ (see also \cite{dinew_very_2024}): 
\begin{equation}
\sigma_2(\nabla^2v):=\sum_{i,j=1}^n[\partial_i\partial_iv\cdot \partial_j\partial_jv-\partial_i\partial_jv\cdot \partial_i\partial_jv]=f.
\end{equation}
Cao and Wang used a similar strategy in \cite{CaoWang2026} to relate \eqref{ddagger} to the two-dimensional Lagrangian mean curvature equation:
\begin{equation}
    curl\ curl (\frac{1}{2}\nabla v\otimes \nabla v+\sym\nabla w-(v \cot \Theta)Id+VId)=-1,
\end{equation}
 where $VId$ is a lower-order error term and $\Theta:\overline\Omega\to(-\pi,\pi)$ is the phase function $\Theta: \overline{\Omega}\rightarrow (-\pi,\pi)$. The solutions to the above equations are very weak in the distributional sense. The improvement in the regularity of $v\in C^{1,\alpha}$ in Theorem \ref{main theorem}
   leads to a corresponding improvement in the regularity of these very weak solutions.\par

\subsection{Convex integration: A review}
We briefly review the convex integration scheme here. The scheme of the iteration was introduced by Nash in \cite{Nash_C1}, modified 
by Conti-De Lellis-Sz\'ekelyhidi in  \cite{conti_h-principle_2012}, and was adapted to equation \eqref{ddagger} in \cite{lewicka_convex_2017} and subsequent works.\par

The general scheme goes as follows. The solution of \eqref{ddagger} is approached by $(V_q,W_q)$ that is constructed by iteration, and the passage from $(V_q,W_q)$ to $(V_{q+1},W_{q+1})$ is called \textbf{one stage}. In each stage, one first mollifies $(V_q,W_q)$, and denotes the deficit in stage $q$ by 
\begin{equation*}
     D_q:=A-\frac{1}{2} \nabla V_q\otimes \nabla V_q-\sym\nabla W_q.
\end{equation*}
The goal of each stage iteration is to make up for the deficit. To achieve that, we first decompose $D_q$ into $M$ rank-one symmetric matrices
\begin{equation}\label{decomposition on M}
    D_q+\sym\nabla \Phi=\sum_{i=1}^M a_i^2\xi_i\otimes\xi_i
\end{equation}
for some $M$; notice that $M$ can  always be taken as
$\frac{n(n+1)}{2}$ with $\Phi=0$ due to Nash \cite{Nash_C1}. Then we divide one stage into \textbf{$M$ steps} $(v_0,w_0),(v_1,w_1),\cdots,(v_M,w_M)$ according to the decomposition, with 
\begin{enumerate}
    \item the initial step $(v_0,w_0)$ being the mollification of $(V_q,W_q)$,
    \item each $ (v_{i},w_{i})$ constructed from $(v_{i-1},w_{i-1})$ 
 by adding specific corrugation functions with small amplitudes (decreasing with respect to stage $q$) and large frequencies (increasing with respect to stage $q$), 
    \item the corrugation in the $i$-th step designed to correct a single term $a_i^2\xi_i\otimes\xi_i$, the term $\Phi$ being absorbed into $w_1$ in the first step, and
    \item the final step $(v_M,w_M)$ set as the next stage $(V_{q+1},W_{q+1})$.
\end{enumerate}
Taking $\delta_q\rightarrow0$ and showing $D_q\rightarrow 0$, a solution of \eqref{ddagger} is obtained as required in Theorem \ref{main theorem}. 
 Crucially, as the proof demonstrates,
 minimizing the number of steps $M$ under suitable estimates directly improves the regularity of the resulting solutions.\par

\begin{rema}\label{rema-on NK}
The Decomposition Lemma \ref{Decomposition Lemma} can be applied within the standard convex integration framework for \eqref{NK} from \cite{conti_h-principle_2012} (cf. \cite{cao2025nashkuipertheoremisometricimmersions}, \cite{DeLellis_Szekelyhidi_on_nash}). 
While we do not explicitly perform the iteration here, the estimates in Lemma \ref{Decomposition Lemma} are compatible with a reduction in the number of steps per stage, which in principle can lead to improving Hölder regularity of solutions to $\alpha < \frac{1}{1+2\Xi_n}$. Related ideas are present in the convex integration frameworks for \eqref{NK};
see, for example, \cite[Section 5.2]{DeLellis_Szekelyhidi_on_nash}. Our formulation
of this point was guided in particular by  \cite[Lemma 3.1, Propositions 3.2 and 3.4]{cao2025nashkuipertheoremisometricimmersions}.
\end{rema}

\subsection{Main ideas of the decomposition lemma}\label{introduction decomposition}

One useful and less standard  point of view in this paper is to use classical algebraic geometry to reduce the number of rank-one symmetric matrices in the decomposition, after allowing the additional term $\sym\nabla\Phi$. To obtain the decomposition in the form of Lemma \ref{Decomposition Lemma}, using the theory of elliptic systems (see Section \ref{preliminaries elliptic}), we must establish: 
\begin{enumerate}
    \item A linear subspace $L \subset \mathbb{R}^{n\times n}_{\mathrm{sym}}$ whose nonzero elements $A$ satisfy that $A+\diag(A)$, obtained by doubling the diagonal entries of $A$, is invertible. This condition enables the construction of an elliptic system for $\Phi$; solving this system ensures that $D+\sym\nabla \Phi$ lies in  $L^\perp$ (see Lemma \ref{elimination lemma});
    \item The assurance that $D+\sym\nabla\Phi$ (or more precisely, $L^\perp$) can be spanned by  $\frac{n(n+1)}{2}-\dim L$ many rank-one symmetric matrices $\xi_1\otimes\xi_1,\cdots,\xi_{\frac{n(n+1)}{2}-\dim L}\otimes\xi_{\frac{n(n+1)}{2}-\dim L}$ (see Lemma \ref{nonnegative coefficient lemma}).
\end{enumerate}
Our objective is to determine the maximal dimension of $L$. This directly minimizes the number of steps $M$ in \eqref{decomposition on M}, thereby yielding the highest regularity achievable by our method.\par
It is natural to projectivize this setup, reformulating the two conditions as \textit{intersection problems} in $\PRs$:
\begin{enumerate}
   \item $\bP(L)\cap \bY=\varnothing$, where $\bY \subset \PRs$ is the hypersurface of matrices whose determinants vanish after scaling the diagonal by $2$; and
\item The Veronese image of $\RP^{n-1}$ in $\PRs^\vee$, denoted by $\bZ$, intersects $\bP(L^\perp)$ in such a way that the intersection spans $\bP(L^\perp)$.
\end{enumerate}
A key observation is that these two requirements are closely coupled: the varieties $\bY$ and $\bZ$ are dual to each other in the sense of \textit{projective duality} (see Section \ref{sec-projective duality}).\par
Notably, the above first condition exhibits a clear $8$-fold periodicity, which was studied by Adams, Lax, and Phillips in the 1960s. In \cite{ALP_1965,ALP_1966_correction}, they determined that the maximum dimension of a real vector space of symmetric $n\times n$ matrices, in which every nonzero element is invertible, is $\rho(\frac{1}{2}n)+1$. This result relies on Adams' theorem concerning the maximum number of linearly independent vector fields on spheres \cite{Adams_1962_vectorfields}.

\begin{theo}[Adams \cite{Adams_1962_vectorfields}]
       The maximum number of linearly independent vector fields on $S^{n-1}$ is $\rho(n)-1$.
\end{theo}
We are now in the position to emphasize that Adams' theorem can be understood through the $K$-theory of real projective spaces and Bott periodicity, which explains the periodicity of $\rho(n)$ and, in turn, of the index $\Xi_n$. Thus, the algebraic obstruction to further elliptic reduction in the convex integration framework is ultimately connected with the  topology underlying the Radon--Hurwitz number.
\par

Sections \ref{subsection-algebraic description} and \ref{section of intersections} adapt these results from algebraic geometry and topology to our setting. We first construct subspaces $L$ with $\bP(L)\cap\bY=\varnothing$ and $\bZ\cap\bP(L^\perp)\neq\varnothing$; a perturbation argument then produces nearby subspaces for which the latter intersection becomes nondegenerate. This yields subspaces satisfying both required conditions, of dimension $\rho(\frac12 n)+1$ for all $n\notin \{2,4,8,16\}$. We state the resulting algebraic theorem in the following, leaving the remaining optimality question in dimensions $n=8,16$ as a conjecture.


\begin{theo}\label{second theorem}
    Let $L\subset\Rsn$, $n\geq 2$ be a subspace satisfying
    \begin{enumerate}
        \item $\bP(L)\cap \bY=\varnothing$, \  and
        \item $\bP(L^\perp)\cap\bZ$ is nonempty and spans $\bP(L^\perp)$.
    \end{enumerate}
   Then the maximum possible dimension of such an $L$ is 
   \begin{equation*}
       \begin{cases}
       \rho(\frac{1}{2}n)  & n=2,4,\\
       \rho(\frac{1}{2}n)+1 &  n\in\mathbb{Z}_{\geq2}-\{2,4,8,16\}.
    \end{cases}
   \end{equation*} 
   For $n=8,16$, the maximal possible dimension lies between $\rho(\frac{1}{2} n)$ and $\rho(\frac{1}{2} n)+1$.
\end{theo}
For  dimensions $n=2,4,8,16$, the existence of normed division algebras $\mathbb{R}$, $\mathbb{C}$, $\mathbb{H}$, and $\mathbb{O}$ provides subspaces $L$ of dimension $\rho(\frac{1}{2}n)+1=\frac{1}{2}n+1$ with $\bP(L)\cap \bY = \varnothing$. However, such large subspaces appear to be incompatible with a nonempty intersection $\bP(L^\perp)\cap \bZ$.  \par
With the $n=2,4$ cases known, see Propositions \ref{n=2}, \ref{n=4}, we propose the following conjecture for the unsettled cases $n=8,16$, which may also be of independent interest.
\begin{conj}[Quadrics base locus conjecture \ref{conjecture}]
    For $n=8,16$, the maximal  dimension of $L$ in Theorem \ref{second theorem} is $\rho(\frac{1}{2}n)$.
\end{conj}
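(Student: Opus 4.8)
\emph{Strategy.} By Theorem \ref{second theorem} it suffices to exclude the possibility $\dim L=\tfrac12 n+1$ when $n=8,16$: condition (1) already forces $\dim L\le\rho(\tfrac12 n)+1=\tfrac12 n+1$ via Adams--Lax--Phillips, and a subspace of dimension $\tfrac12 n$ meeting both conditions is constructed in the proof of Theorem \ref{second theorem}, so ruling out $\tfrac12 n+1$ pins the maximum at $\tfrac12 n$. The plan is to show that for every $L$ of the borderline dimension $\tfrac12 n+1$, condition (2) fails, and more precisely that already $\bP(L^\vee)\cap\bZ=\varnothing$, i.e. there are no real rank-one matrices in $L^\vee$ at all. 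Writing $\mathcal V_L\subset\RP^{n-1}$ for the common real zero locus of the quadratic forms $\xi\mapsto\xi^\top A\xi$ attached to $A\in L$ (modulo the diagonal-doubling twist), the Veronese embedding identifies $\bP(L^\vee)\cap\bZ$ with $\mathcal V_L$, and the spanning clause of (2) with the statement that no quadric outside $L$ vanishes on $\mathcal V_L$; our target is the stronger assertion $\mathcal V_L=\varnothing$, which is exactly what occurs in the settled cases $n=2,4$.

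\emph{Classification of the borderline $L$.} Under the diagonal-doubling isomorphism $T$, condition (1) says $T(L)$ is a subspace of $\Rsn$ of maximal dimension $\rho(\tfrac12 n)+1$ with every nonzero member invertible. I would invoke the structure theory underlying Adams--Lax--Phillips (the representation theory of real Clifford algebras, hence ultimately Adams' vector-fields theorem): for $n=8,16$ one has $k:=\rho(\tfrac12 n)+1=\tfrac12 n+1$ pairwise anticommuting symmetric involutions $S_1,\dots,S_k$ on $\RR^n$, coming from the irreducible $\Cl_{k,0}$-module structure ($\RR^8\cong\HH^2$, and $\RR^{16}\cong\mathbb{O}^2$ with the imaginary octonion multiplications entering for $n=16$). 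The claim to establish is that \emph{every} maximal $L$ is congruent to $T^{-1}\langle S_1,\dots,S_k\rangle$ for such a family, up to the finitely many chirality choices, thereby reducing the conjecture to finitely many explicit $L$.

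\emph{Emptiness of the base locus.} Suppose $\xi\in\RR^n\setminus\{0\}$ satisfies $\xi^\top A\xi=0$ for all $A\in L$. Ignoring the twist for a moment (so $L=\langle S_1,\dots,S_k\rangle$), run over the Clifford group $G$ generated by the $S_a$ and consider the orbit $\{g\xi:g\in G\}$. One computes $\langle g\xi,h\xi\rangle=\xi^\top(g^{-1}h)\xi$, which vanishes whenever the reduced word $g^{-1}h$ is skew-symmetric (the symmetric difference of the supports of $g$ and $h$ has size $\equiv 2,3\pmod 4$) or lies in $L$ itself (supports differing by a single index). Choosing the supports to avoid the ``bad'' symmetric-difference sizes $4,5,8,9,\dots$ yields a family of more than $n$ pairwise orthogonal nonzero vectors $g\xi\in\RR^n$ — for $k=5$, for instance, the $10$ operators indexed by $\varnothing$, the singletons, and the pairs containing a fixed index — forcing $\xi=0$, a contradiction; hence $\mathcal V_L=\varnothing$ and (2) fails, giving $\dim L=\tfrac12 n$ as the maximum.

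\emph{The main obstacle.} Two difficulties are exactly what keep this a conjecture. First, the diagonal-doubling twist is harmless for (1) but not for (2): the quadrics whose common real zero locus is $\mathcal V_L$ are $\xi\mapsto\xi^\top T^{-1}(S_a)\xi$, and $T^{-1}(S_a)$ no longer anticommutes, so the clean orbit-orthogonality computation must be redone while tracking the diagonal corrections $\tfrac12(S_a)_{ii}\xi_i^2$; one would try to arrange by an orthogonal congruence that the $S_a$ have vanishing diagonal, but this may be obstructed. Second, for $n=16$ (hence $k=9$) the required combinatorial input — a binary code of length $9$ with all pairwise distances in $\{1,2,3,6,7\}$ and size exceeding $16$ — is not evidently available, so even without the twist the packing argument may not close, and an octonionic refinement (exploiting the $G_2$-structure of $\xi$), or a genuinely different argument, is needed. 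A fallback, should $\mathcal V_L$ turn out nonempty, is to violate the spanning half of (2) instead, by exhibiting an extra quadric through $\mathcal V_L$ — as already happens in the complexified $n=4$ picture, where $\mathcal V_L$ degenerates to a pair of skew lines cut out by a strictly larger linear system of quadrics.
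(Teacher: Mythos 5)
The paper does not prove this statement. Conjecture \ref{conjecture} --- equivalently, the $n=8,16$ alternative in Theorem \ref{second theorem} --- is explicitly left open there; only $n=2$ (Proposition \ref{n=2}, an elementary orthogonality argument) and $n=4$ (Proposition \ref{n=4}, Galkin's Euler-characteristic count on an incidence variety fibered over $\RP^2$) are established. So there is no proof to measure your attempt against, and your submission, which you yourself present as a strategy with two unresolved difficulties, is not a proof of the statement.

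\textbf{On the substance of your strategy.} Your reduction is correctly aimed, and in fact your target is not ``stronger'' than needed but exactly equivalent to it: by Corollary \ref{Z nondegenerate in L}, once $\bP(L)\cap\bY=\varnothing$, any nonempty intersection $\bP(L^\vee)\cap\bZ$ is automatically nondegenerate, so condition (2) can only fail through emptiness. For the same reason your proposed fallback --- exhibiting a nonempty but degenerate $\bP(L^\vee)\cap\bZ$ by finding an extra quadric through it --- is impossible under condition (1). Your first announced ``obstacle'' is also spurious: under the paper's pairing, $\iota(\xi)\perp[A]$ holds iff $\xi^T(A+\diag(A))\xi=0$, so the quadrics cutting out $\bP(L^\vee)\cap\bZ$ are precisely the diagonal-doubled matrices, i.e.\ the members of the invertible space itself; no inverse-twist correction enters and no ``hollowing'' normalization is required.

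\textbf{The genuine gaps.} Two steps are missing and neither is cosmetic. (i) The classification claim --- that every maximal invertible subspace of $\Rsn$ is congruent to the span of pairwise anticommuting symmetric involutions --- does not follow from Adams--Lax--Phillips, which yields only the dimension bound $\rho(\tfrac12 n)+1$, nor from Adams' theorem; without it your orbit argument covers only Clifford-type $L$ and says nothing about a hypothetical exceptional maximal subspace. (ii) For $n=16$, $k=9$, the combinatorial packing you need (more than $16$ subsets of a $9$-set with all pairwise symmetric differences of size in $\{1,2,3,6,7\}$) is not produced, and you concede it may not exist; the argument therefore does not close even in the Clifford-type case. Your $k=5$ computation does check out: $\varnothing$, the five singletons, and the four pairs through a fixed index give ten words whose pairwise symmetric differences have size $1$, $2$, or $3$, hence ten pairwise orthogonal nonzero vectors in $\RR^8$, a contradiction. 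That yields a genuine partial result --- the conjecture holds for $n=8$ whenever the invertible space is spanned by anticommuting symmetric involutions --- which is worth recording, but the statement as posed remains unproved by your argument, just as it does in the paper.
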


\begin{rema}
    The idea of using $\sym\nabla\Phi$ to reduce the number of terms in the decomposition extends  \cite[Proposition 3.1]{cao_very_2019} by Cao and Sz\'ekelyhidi  from dimension $2$ to arbitrary $n$. In \cite{cao_very_2019}, the authors interpret this construction in terms of a \textit{planar div-curl system}.
\end{rema}

\begin{rema}\label{rema-on recent Cao}
   Very recently, when we were about to finish this paper, we learned of the breakthrough by Cao, Hirsch, and Inauen in \cite{cao2025nashkuipertheoremisometricimmersions}, which improves the regularity threshold for solutions of \eqref{NK} to $\alpha<\frac{1}{n^2-n+1}$ for the H\"older exponent. Their result is achieved by employing novel \textit{corrugation ansatz} and \textit{integration by parts}   in  convex integration, whereas our approach uses  an alternative  \textit{reduction of matrix decomposition} in strategy. As the authors remark in \cite{cao2025nashkuipertheoremisometricimmersions}, their theorem is  adaptable to the \eqref{ddagger} problem. Therefore, future work may explore combining these techniques to further improve the regularity of the solutions to \eqref{ddagger}, and extend the study of \eqref{NK}.
\end{rema}

\begin{ack} 
The authors thank Sergey Galkin, Runjian Huang, Dominik Inauen, Ilia Itenberg,  Hua-Zhong Ke, Changzheng Li, Tongtong Li, Xiangfei Li, Jiayu Song, Lei Song, Ju Tan,  Xiaowei Wang, Hui-chun Zhang and Jintian Zhu, with whom we discussed various aspects of this project. We would also like to thank Wentao Cao for discussions and suggestions in our early draft. Special thanks are given to our friend Tongtong Li, who worked closely with us at the beginning of this project.  The authors would have preferred that he join us as a coauthor of this paper, but have to respect his wishes in this regard.
Special thanks also go to Sergey Galkin, who generously shares with us the proof of Proposition \ref{n=4} via Euler characteristics. Z. Su thanks Sergey Galkin, Changzheng Li and Xiaowei Wang for their encouragement and sustained interest in this research. W. Zhang thanks Hui-chun Zhang for his gentle encouragement and kind help in this research.

\end{ack}

\section{Preliminaries}

\subsection{Preparations on iterations of convex integration}

For general analysis notations, let $C^{k,\alpha}(\overline{\Omega},\mathbb{R}^m)$ denote the standard H\"{o}lder space. We denote the $C^k$ norm and the $C^{k,\alpha}$ norm as $\lVert \,\cdot\, \rVert_{k}$ and $\|\cdot\, \rVert_{k,\alpha}$, respectively.
When there is no ambiguity, we also write $|\cdot|$ for $\|\cdot\|_0$. 
For brevity, we set $\|(v,w)\|_k:=\|v\|_k+\|w\|_k.$

\subsubsection{Corrugation functions}
Following \cite{lewicka_convex_2017}  and subsequent related works on \eqref{ddagger},  we will use the following \textbf{corrugation functions} that were originally from Kuiper \cite{Kuiper1955OnC1_I&II}.   Denote 
\begin{equation*}
    \Gamma_1(s,t):=\frac{s}{\pi}\sin (2\pi t),\quad \Gamma_2(s,t):=-\frac{s^2}{4\pi}\sin (4\pi t). 
\end{equation*}
Then they satisfy the identity
\begin{equation}\label{dagger_Gamma}
\partial_t\Gamma_2(s,t)+\frac{1}{2}|\partial_t \Gamma_1(s,t)|^2=s^2.
     \tag{$\dagger_ \Gamma $}
\end{equation}
As a consequence of definitions, we have the following estimates for $0\leq k \leq 3$  and some $C>0$:
\begin{equation}\label{Gamma estimates}
  \begin{array}{lll}
|\partial_t^k\Gamma_1(s,t)|\leq C|s|, & |\partial_s\partial_t^k\Gamma_1(s,t)|\leq C,
&
\\
|\partial_t^k\Gamma_2(s,t)|\leq Cs^2, 
& |\partial_s\partial_t^k\Gamma_2(s,t)|\leq C|s|,
&
|\partial_s^2\partial_t^k\Gamma_2(s,t)|\leq C,
  \end{array} \quad \text{ for any }s, t\in \RR.
\end{equation}
Such functions $\Gamma_1,\Gamma_2$ are important for convex integration iteration. 

\subsubsection{Mollification}
Let $\ast$ denote convolution.  Let $\phi_l$ denote the standard mollifier at scale $l>0$. For any $f\in C^{k,\alpha}(\overline{\Omega},\mathbb{R}^m)$, to define $f*\phi_l$ as a map on $\overline{\Omega}$, we apply the Whitney extension theorem to extend $f$ to a map $\mathring{f}\in C^{k,\alpha}(\mathbb{R}^n,\mathbb{R}^m)$, where the regularity of  boundary of $\Omega$ is required. The mollification is then given by $f*\phi_l:=(\mathring{f}*\phi_l)|_{\overline{\Omega}}$ (see also \cite[Section 2.2]{cao_hirsch_inauen_2025}). We recall here some basic yet very useful inequalities, the proofs and more details of mollification can be found in \cite{conti_h-principle_2012}.
\begin{lemm}[Mollification Lemma]\label{Mollification Lemma}
    For any $0<\alpha<\beta\leq1$, $0\leq r\leq s\leq 2$, and $k\in\mathbb{Z}_{\geq0}\,,$ $f,f_1,f_2\in C^{k,\alpha}(\overline{\Omega})$, we have
    \begin{align}
        &[f]_{\alpha}\leq C\|f\|_0^{1-\frac{\alpha}{\beta}}[f]_{\beta}^{\frac{\alpha}{\beta}},\\
        &\|f_1f_2\|_{k,\alpha}\leq C\|f_1\|_{k,\alpha}\|f_2\|_{k,\alpha},\\
        &[f\ast\phi_l]_{r}\leq C[f]_r,\\
        &[f\ast\phi_l]_{r+s}\leq Cl^{-s}[f]_r,\\
        &[f-f\ast\phi_l]_{r}\leq Cl^{s-r}[f]_{s},\label{f-f*phi}\\
        &[(f_1f_2)\ast\phi_l-(f_1\ast\phi_l)(f_2\ast\phi_l)]_r\leq Cl^{2s-r}[f_1]_s[f_2]_s, \text{ for }0<s\leq 1.
    \end{align}
    where $\phi_l$ is the standard mollification kernel with scale $l>0$.
\end{lemm}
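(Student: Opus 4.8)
All six inequalities are classical; the plan is to reduce them to the case $\Omega=\mathbb{R}^n$ and then run the standard arguments, following \cite{conti_h-principle_2012}. Since $\partial\Omega$ is $C^2$, the Whitney extension theorem produces, for each $f\in C^{k,\alpha}(\overline{\Omega})$, an extension $\mathring f\in C^{k,\alpha}(\mathbb{R}^n)$ with $\|\mathring f\|_{k,\alpha}\le C_\Omega\|f\|_{k,\alpha}$ and likewise on the level of seminorms; because $f\ast\phi_l$ is by definition the restriction of $\mathring f\ast\phi_l$, every H\"older seminorm of the restriction is dominated by the corresponding seminorm of $\mathring f\ast\phi_l$ on $\mathbb{R}^n$. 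Hence it suffices to prove all the inequalities for (compactly supported) functions on $\mathbb{R}^n$, where $\phi_l(x)=l^{-n}\phi(x/l)$ for a fixed nonnegative even bump function $\phi$ with $\int\phi=1$.

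For the interpolation inequality, fix $x\ne y$ and a scale $\rho>0$. If $|x-y|\le\rho$ then $|f(x)-f(y)|\le[f]_\beta|x-y|^\beta\le[f]_\beta\rho^{\beta-\alpha}|x-y|^\alpha$; if $|x-y|>\rho$ then $|f(x)-f(y)|\le2\|f\|_0\le2\|f\|_0\rho^{-\alpha}|x-y|^\alpha$. Choosing $\rho=(\|f\|_0/[f]_\beta)^{1/\beta}$ balances the two and gives $[f]_\alpha\le C\|f\|_0^{1-\alpha/\beta}[f]_\beta^{\alpha/\beta}$. The product estimate then follows from the Leibniz rule $\partial^\gamma(f_1f_2)=\sum_{\mu\le\gamma}\binom{\gamma}{\mu}\partial^\mu f_1\,\partial^{\gamma-\mu}f_2$ for $|\gamma|\le k$: one bounds each intermediate derivative $\|\partial^\mu f_i\|_0$ (and the H\"older seminorm of the top-order factor) by interpolation between $\|f_i\|_0$ and $\|f_i\|_{k,\alpha}$, and converts the resulting products of fractional powers into the stated sum by Young's inequality $ab\le a^p/p+b^q/q$ with $p=k/|\mu|$, $q=k/|\gamma-\mu|$. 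This is exactly the statement that the $C^{k,\alpha}$-norms form an interpolation scale.

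For the three mollification inequalities, write $\partial^\gamma(f\ast\phi_l)=(\partial^\gamma f)\ast\phi_l$ when $|\gamma|$ does not exceed the order of $f$, and $\partial^\gamma(f\ast\phi_l)=f\ast\partial^\gamma\phi_l$ otherwise. The bound $[f\ast\phi_l]_r\le[f]_r$ follows from $(g\ast\phi_l)(x)-(g\ast\phi_l)(y)=\int\big(g(x-z)-g(y-z)\big)\phi_l(z)\,dz$ together with $\int\phi_l=1$ and $\phi_l\ge0$. The bound $[f\ast\phi_l]_{r+s}\le Cl^{-s}[f]_r$ follows by moving the derivatives beyond the order of $f$ onto $\phi_l$ and using $\|\partial^\gamma\phi_l\|_{L^1}=Cl^{-|\gamma|}$ from scaling, then invoking the previous inequality for a lower-order derivative of $f$. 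For $[f-f\ast\phi_l]_r\le Cl^{s-r}[f]_s$ with $r=0$ and $0<s\le1$ one writes $f(x)-(f\ast\phi_l)(x)=\int\big(f(x)-f(x-z)\big)\phi_l(z)\,dz$ and bounds the integrand by $[f]_s|z|^s\le[f]_s(Cl)^s$ on $\supp\phi_l$; for $1<s\le2$ one subtracts the first-order Taylor term, which integrates to zero since $\phi$ is even, and estimates the second-order remainder by $[f]_s|z|^s$; general $r$ is obtained by differentiating and, for non-integer $r$, interpolating between the integer cases just established.

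Finally, for the quadratic commutator estimate we use the Constantin--E--Titi identity: with $\bar f:=f\ast\phi_l$,
\begin{equation*}
\overline{f_1f_2}(x)-\bar f_1(x)\bar f_2(x)=\int\phi_l(z)\big(f_1(x-z)-f_1(x)\big)\big(f_2(x-z)-f_2(x)\big)\,dz-\big(\bar f_1(x)-f_1(x)\big)\big(\bar f_2(x)-f_2(x)\big),
\end{equation*}
which one verifies by expanding both sides. For $r=0$ each term on the right is at most $[f_1]_s[f_2]_s(Cl)^{2s}$, using $|f_i(x-z)-f_i(x)|\le[f_i]_s|z|^s$ on $\supp\phi_l$ for the first term and inequality \eqref{f-f*phi} for the second. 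For general $r$ one differentiates the identity, distributes the derivatives by Leibniz (those landing on $\phi_l$ cost $l^{-1}$ each by scaling, those landing on $f_i$ are absorbed into $[f_i]_s$), and sums, producing the factor $l^{2s-r}$. I expect the only genuine work to be the bookkeeping in \eqref{f-f*phi} and in this last estimate when $r$ or $s$ is not an integer—there one simply interpolates between the integer cases—but no new idea is required; the content is entirely standard and is also recorded in \cite{conti_h-principle_2012}.
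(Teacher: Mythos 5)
The paper gives no proof of this lemma, instead citing \cite{conti_h-principle_2012}, and your argument is a correct reconstruction of exactly that standard proof: Whitney extension to reduce to $\mathbb{R}^n$, the two-scale case split for interpolation, Leibniz plus interpolation for the product rule, moving derivatives onto the kernel for the mollification bounds, Taylor expansion with the evenness of $\phi$ for \eqref{f-f*phi}, and the Constantin--E--Titi commutator identity for the last estimate. Nothing is missing and no different route is taken.
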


\subsection{Elliptic theory to linear systems}\label{preliminaries elliptic}
For later applications, we need some elliptic regularity theory for linear systems. For coupled elliptic systems, unlike the scalar case, we need the following ellipticity conditions.
\begin{defi}\label{elliptic conditions}
    For a system of $m$ equations over domains in $\RR^n$, the matrix of coefficients $(A^{\alpha\beta}_{ij})^{1\leq\alpha,\beta\leq n}_{1\leq i,j\leq m}$ is said to satisfy
\begin{itemize}
    
\item the very strong ellipticity condition, or the \textbf{Legendre condition}, if
there is a $\lambda>0$ such that
\begin{equation}
\label{eq:L-cond}
A^{\alpha\beta}_{ij}\xi^i_\alpha\xi^j_\beta\geq\lambda|\xi|^2, \forall\xi\in\RR^{m\times n},
\end{equation}
\item the strong ellipticity condition, or the \textbf{Legendre-Hadamard condition}, if there is a $\lambda>0$ such that
\begin{equation}
\label{eq:L-H-cond}
A^{\alpha\beta}_{ij}\xi_\alpha\xi_\beta\eta^i\eta^j\geq\lambda|\xi|^2|\eta|^2, \forall\xi\in\RR^n,\eta\in\RR^m.
\end{equation}
\end{itemize}
The constant $\lambda$ is called an ellipticity constant.
\end{defi}

\begin{rema}
The Legendre condition is stronger than the Legendre-Hadamard
condition. Indeed, one 
just takes $\xi^i_\alpha$ as $\xi_\alpha\eta^i$. Note that the converse is trivially true
in case $m=1$ or $n=1$, but is not true in general.
\end{rema}

Next we focus on the following elliptic system:
\begin{equation}
    \label{eqs:ells}
    \begin{cases}
        -D_\beta (A^{\alpha\beta}_{ij}D_\alpha u^i )=f^j \qquad&\text{ in }\Omega,\\
        u^i=0\qquad&\text{ on }\partial\Omega,
    \end{cases}
\end{equation}
where $u:=\{u^i\}_{i=1,\dots,m}, f:=\{f^j\}_{j=1, \dots,m}$ are vector functions over domains $\Omega\subset \RR^n$  to $\RR^m$.

It can be  checked that the Dirichlet problem of \eqref{eqs:ells} is  solvable in $W^{1,2}(\Omega)$ by Lax-Milgram Theorem, under the Legendre condition, or the Legendre-Hadamard condition with constant coefficients (cf. theorem 3.42 in \cite{giaquinta_introduction_2012}). Therefore, we can move forward to the improvement of the regularity to the existing weak solutions. 

In the cases considered below, the coefficients are constant, then by Theorem 4.14 in \cite{giaquinta_introduction_2012}, weak solutions are upgraded to $W^{2,2}(\Omega)$. Thanks to the linearity, the Dirichlet problem of \eqref{eqs:ells} has at most one solution. Since the coefficients are constant, the system can also be written in non-divergence form, hence by theorem 5.25 in \cite{giaquinta_introduction_2012}, we have the following theorem.

\begin{theo}\label{elliptic theorem}
    The Dirichlet problem for the following coupled elliptic system
    \begin{equation*}
        \begin{cases}
        -D_\beta (A^{\alpha\beta}_{ij}D_\alpha u^i )=f^j \qquad&\text{in}~\Omega,\\
        u^i=0\qquad&\text{on}~\partial\Omega,
        \end{cases}
    \end{equation*}	    	 
    where $\Omega$ is a bounded domain in $\RR^n$ with $\partial\Omega\in C^{2,1}$, $\{f^j\}_{j=1\ \dots,m}\in C^{k,\gamma}(\overline{\Omega})$, the coefficients $A^{\alpha\beta}_{ij}$ are constants and satisfy \eqref{eq:L-H-cond}, has a unique solution $\{u^i\}_{i=1,\dots,m}\in C^{k+2,\gamma}(\overline{\Omega})$. Furthermore, we have 
    \begin{equation}
        \|u\|_{k+2,\gamma}\leq C\|f\|_{k,\gamma},
    \end{equation}
    where $C$ depends only on $\Omega, k, \gamma$ and the ellipticity constants $\lambda$.
\end{theo}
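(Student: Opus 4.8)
The plan is to run the classical three-step program for linear elliptic systems: (i) produce a unique weak solution in $W^{1,2}_0(\Omega)$; (ii) upgrade it to $W^{2,2}(\Omega)$; (iii) apply Schauder theory to conclude $u\in C^{2,\gamma}(\overline\Omega)$ with the stated estimate. All three ingredients are available in the references already cited in the excerpt (Giaquinta--Martinazzi, Theorems 3.42, 4.14, 5.25), so the task is really to assemble them and track the hypotheses.

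First I would establish coercivity. For constant coefficients the Legendre--Hadamard condition \eqref{eq:L-H-cond} implies a G\r{a}rding inequality on $W^{1,2}_0(\Omega)$: extending $u\in C^\infty_c(\Omega)$ by zero to $\RR^n$ and using Plancherel, $\int_\Omega A^{\alpha\beta}_{ij}D_\alpha u^i D_\beta u^j = \int_{\RR^n} A^{\alpha\beta}_{ij}\,\widehat{D_\alpha u^i}\,\overline{\widehat{D_\beta u^j}} = \int_{\RR^n} A^{\alpha\beta}_{ij}\xi_\alpha\xi_\beta\,\hat u^i(\xi)\overline{\hat u^j(\xi)}\,d\xi \ge \lambda\int_{\RR^n}|\xi|^2|\hat u(\xi)|^2\,d\xi = \lambda\|Du\|_{L^2}^2$, and by density this persists on $W^{1,2}_0(\Omega)$; Poincar\'e then gives full coercivity of the bilinear form $a(u,\varphi):=\int_\Omega A^{\alpha\beta}_{ij}D_\alpha u^i D_\beta\varphi^j$. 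Since $f\in C^{0,\gamma}(\overline\Omega)\subset L^2(\Omega)$, the functional $\varphi\mapsto\int_\Omega f^j\varphi^j$ is bounded on $W^{1,2}_0(\Omega)$, so the Lax--Milgram theorem yields a unique weak solution $u\in W^{1,2}_0(\Omega)$, with $\|u\|_{W^{1,2}}\le C(\lambda,\Omega)\|f\|_{L^2}$. Uniqueness of the eventual $C^{2,\gamma}$ solution follows since any such solution is in particular a weak solution.

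Next I would promote regularity. Interior $W^{2,2}$ estimates follow from the difference-quotient method (the constant-coefficient structure makes translation of the equation immediate), and boundary $W^{2,2}$ estimates follow by flattening $\partial\Omega$ locally — here the $C^2$ regularity of the boundary is exactly what is needed so that the flattened coefficients remain bounded measurable and the difference-quotient argument in tangential directions goes through, the normal second derivative then being recovered algebraically from the equation; this is Theorem 4.14 in \cite{giaquinta_introduction_2012}. Now $u\in W^{2,2}(\Omega)$ and, because the coefficients are constant, the system may be rewritten in non-divergence form $-A^{\alpha\beta}_{ij}D_\alpha D_\beta u^i=f^j$; since $f\in C^{0,\gamma}$ and $\partial\Omega\in C^{2,1}\subset C^{2,\gamma}$, the interior and boundary Schauder estimates for Legendre--Hadamard systems (Theorem 5.25 in \cite{giaquinta_introduction_2012}, in the Agmon--Douglis--Nirenberg tradition) apply: they give $u\in C^{2,\gamma}(\overline\Omega)$ together with $\|u\|_{2,\gamma}\le C\big(\|f\|_{0,\gamma}+\|u\|_0\big)$, and interpolation absorbs the lower-order term using the $L^2$-bound from Lax--Milgram, leaving $\|u\|_{2,\gamma}\le C\|f\|_{0,\gamma}$ with $C=C(\Omega,\gamma,\lambda)$.

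The step I expect to be the genuine pressure point is the coercivity/G\r{a}rding inequality, since the hypothesis is only the Legendre--Hadamard condition \eqref{eq:L-H-cond} and not the stronger Legendre condition \eqref{eq:L-cond}: for variable coefficients Legendre--Hadamard does \emph{not} imply coercivity on $W^{1,2}_0$ in general, and the argument succeeds here precisely because the coefficients are constant, which is what licenses the Fourier-transform computation above. The remaining steps are standard once one is careful that the boundary regularity of $\Omega$ ($C^2$ for the $W^{2,2}$ stage, $C^{2,\gamma}$ for the Schauder stage) is sufficient, both of which are guaranteed by the assumption $\partial\Omega\in C^{2,1}$.
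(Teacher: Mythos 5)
Your proposal follows exactly the route the paper takes: Lax--Milgram for the weak solution (with coercivity supplied by the Legendre--Hadamard condition plus constant coefficients, cf.\ Theorem 3.42 of Giaquinta--Martinazzi), the $W^{2,2}$ upgrade via Theorem 4.14, uniqueness from linearity, and Schauder theory (Theorem 5.25) for the $C^{2,\gamma}$ conclusion and estimate. Your explicit Fourier-transform verification of the G\r{a}rding inequality is a welcome elaboration of a step the paper only cites, and correctly identifies why the constant-coefficient hypothesis is essential.
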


\subsection{Radon--Hurwitz number: an introduction}\label{RH number: a review}
Hurwitz's theorem states that the only normed division algebras are reals $\mathbb{R}$, complexes $\mathbb{C}$, quaternions $\mathbb{H}$ and octonions $\mathbb{O}$ \cite{Lee_1948_Clifford}.
This classical result is closely related to Clifford algebras and Bott periodicity, which underlies an $8$-fold periodicity in topology and geometry \cite{Atiyah-Bott-Shapiro_1964, Bott_1959_periodicity}.\par

 For convenience, and with a slight abuse of terminology, we say a vector space  $W\subset\mathbb{R}^{n\times n}$ (resp. $W\subset\mathbb{R}^{n\times n}_{\mathrm{sym}}$) is \textbf{invertible} if every nonzero element of $W$ is invertible.  Recall  that the $\ell$ vector fields $v_1,\cdots,v_\ell$ on a manifold are said to be \textbf{independent} if for each point $x$ on the manifold, the vectors $v_1(x),\cdots,v_\ell(x)$ are linearly independent. Additionally, for definition and classification of representations of Clifford algebras $\mathrm{Cl}_{n}$, see \cite{lawsonMichelsohn_2016_spin}. 
\begin{prop}\label{prop-three statements imply}
    Consider the following existence statements on $\ell$.
\begin{enumerate}[label=\textcolor{blue}{\roman*.}, ref=\roman*]\label{Radon--Hurwitz existence prop}
  \item The space of $n \times n$ real matrices has an $\ell$-dimensional invertible subspace $W_n$. \label{one}   \item $\mathbb{R}^n$ admits a $\mathrm{Cl}_{\ell-1}$ representation.\label{two}
    \item There are $\ell-1$ independent vector fields on sphere $S^{n-1}$. \label{three}
\end{enumerate}

Then \eqref{one} implies \eqref{three}, and \eqref{two} implies \eqref{three}.
\end{prop}
\begin{proof}
 \eqref{two} implies \eqref{three} as proved in \cite[Theorem 7.1, Chapter 1]{lawsonMichelsohn_2016_spin}. To see \eqref{one} implies \eqref{three}, we define the $\ell-1$ vector fields at each unit vector $\varepsilon\in S^{n-1}\subset\mathbb{R}^n$ to be $\Pi_{\varepsilon^\perp}(A_1^{-1}A_i\varepsilon):=A_1^{-1}A_i\varepsilon -\langle A_1^{-1}A_i\varepsilon,\varepsilon\rangle\varepsilon$ for $2\leq i\leq \ell$, where $\{A_1,\cdots,A_\ell\}$ is a basis of the invertible $W_n$, and $\langle\ \ ,\ \ \rangle$ is the standard inner product on $\mathbb{R}^n$. Assume that these vector fields are not independent, namely 
 for some $(c_2,\cdots,c_{\ell})\neq (0,\cdots,0)$ and some $\varepsilon_0\in S^{n-1}$,  one has
 $\sum_{i\geq 2}c_i\Pi_{\varepsilon_0^{}\perp}(A_1^{-1}A_i\varepsilon_0)=0$, which is equivalent to
 \begin{equation}\sum_{i\geq 2} c_i(A_i\varepsilon_0)-(\sum_{i\geq 2} c_i \langle A_1^{-1}A_i\varepsilon_0,\varepsilon_0\rangle)A_1\varepsilon_0=\left(\sum_{i\geq 2} c_iA_i-(\sum_{i\geq 2} c_i \langle A_1^{-1}A_i\varepsilon_0,\varepsilon_0\rangle)A_1\right)\varepsilon_0=0.
 \end{equation}
    It contradicts the assumption  that every nonzero linear combination of $A_i$ is invertible.
\end{proof}

The study of independent vector fields on spheres is closely related to the present work as the above proposition shows. Classical constructions of Radon and Hurwitz produce $\rho(n)-1$ independent vector fields on $S^{n-1}$, where $\rho(n)$ is the Radon--Hurwitz number (see Definition \ref{R-H number defi}). Adams \cite{Adams_1962_vectorfields} showed that this bound is optimal by using methods from homotopy theory and topological $K$-theory, in connection with Bott periodicity.  See also \cite{Eckmann_1994_HurwitzRadonMR} for related discussions and further background on this relation.

\begin{theo}[Adams' theorem on vector fields on spheres]\label{adams theorem}  There exist $\rho(n)-1$ independent vector fields on $S^{n-1}$, and there do not exist $\rho(n)$ such vector fields.
\end{theo}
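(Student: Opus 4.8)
The plan is to prove the two halves of the statement separately, since they are of entirely different natures. \textbf{Existence.} Here I would exhibit the Hurwitz–Radon system of matrices via Clifford-module theory. Write $n = 2^{4a+b}(2c+1)$ with $0 \le b \le 3$ as in Definition \ref{R-H number defi}. The classification of real Clifford modules (see \cite{lawsonMichelsohn_2016_spin}, \cite{Atiyah-Bott-Shapiro_1964}) shows that $\mathbb{R}^n$ carries a module structure over the Clifford algebra $\mathrm{Cl}_{\rho(n)-1}$: the dimension of an irreducible $\mathrm{Cl}_k$-module divides $n$ precisely when $k \le \rho(n)-1$, and this divisibility threshold is exactly the arithmetic content encoded in $\rho$. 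Given such a module structure, apply the implication (ii)$\Rightarrow$(iii) of Proposition \ref{radon-hurwitz existence prop} to conclude that $S^{n-1}$ admits $\rho(n)-1$ independent vector fields. Concretely, one obtains skew-symmetric orthogonal matrices $A_1,\dots,A_{\rho(n)-1}$ satisfying $A_i^2 = -I$ and $A_iA_j = -A_jA_i$ for $i \neq j$; the assignment $\varepsilon \mapsto A_i\varepsilon$ then defines a unit tangent vector field on $S^{n-1}$, where skew-symmetry forces tangency and the anticommutation relations make the $A_i\varepsilon$ mutually orthogonal, hence independent.

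\textbf{Non-existence.} This is the deep direction, and the plan is to reduce it to a problem in stable homotopy and topological $K$-theory in the manner of James and Adams. Suppose $S^{n-1}$ carried $k$ independent vector fields; after Gram–Schmidt one gets $k$ pointwise-orthonormal tangent fields, i.e. a section of the bundle of orthonormal $k$-frames in $TS^{n-1}$, equivalently a section of the Stiefel fibration $V_{n,k+1} \to S^{n-1}$. By James's analysis this is equivalent to a splitting (``coreducibility'') statement for the stunted real projective space $\mathbb{RP}^{n-1}/\mathbb{RP}^{n-1-k}$, namely that its top cell splits off stably. One then shows that for $k = \rho(n)$ such a splitting cannot exist. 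Adams' argument runs through $K$-theory: compute $\widetilde{KO}^*$ (and $\widetilde{K}^*$) of the relevant stunted projective spaces, bring in the Adams operations $\psi^\ell$ acting on these groups, and extract a $2$-primary numerical obstruction to the splitting. The $8$-fold periodicity of $\rho$ is the direct imprint of Bott periodicity $KO^{i} \cong KO^{i+8}$, and the explicit formula $\rho(n) = 8a + 2^b$ marks exactly the threshold at which the $K$-theoretic obstruction first vanishes.

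\textbf{The main obstacle} is the non-existence half: it has no elementary proof and is precisely the content of Adams' theorem \cite{Adams_1962_vecotrfields}, so I would not reprove it here. I would cite Adams and only record that the mechanism — $K$-theory of stunted projective spaces, Adams operations, and Bott periodicity — is what produces the $8$-fold periodicity of $\rho(n)$, and hence, through \eqref{defi of Xi}, the periodicity of the exponent $\Xi_n$ appearing in the Main Lemma. The existence half, by contrast, is routine once Proposition \ref{radon-hurwitz existence prop} and the Clifford-module classification are in hand.
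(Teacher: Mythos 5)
Your proposal matches the paper's treatment: the paper likewise does not reprove this theorem but attributes the existence half to the classical Radon--Hurwitz construction via Clifford-module representations (citing Atiyah--Bott--Shapiro and invoking the implication (ii)$\Rightarrow$(iii) of Proposition \ref{radon-hurwitz existence prop}), and cites Adams \cite{Adams_1962_vecotrfields} for the non-existence half, noting only that Bott periodicity is the source of the $8$-fold periodicity of $\rho(n)$. Your added sketch of the James/Adams mechanism via stunted projective spaces is accurate but plays the same role as the paper's citation.
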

Note for odd $n$, this recovers the hairy ball theorem for the even-dimensional sphere $S^{n-1}$. The existence of $\rho(n)-1$ independent vector fields can be obtained from the classical constructions of Radon and Hurwitz; see also Proposition \ref{prop-construction of W_n}. For a proof of existence via representations of Clifford algebras, we refer to \cite{Atiyah-Bott-Shapiro_1964} (cf. \cite[Theorem 7.2, Chapter 1]{lawsonMichelsohn_2016_spin}).

In the following, we use the standard notations $\oplus$ and $\otimes$ for the Kronecker sum and Kronecker product of matrices, respectively, as in \cite[Chapter~4]{Horn_Johnson_1991_matrix}.

\begin{prop}\label{prop-construction of W_n}
    For any $\lambda\in\Lambda=\mathbb R, \mathbb C, \mathbb H, \mathbb O$, denote \(\mathsf L_\lambda\) as the real \(\dim \Lambda\times \dim\Lambda\) matrix representing left multiplication by $\lambda$. Let $W_n$ be defined inductively as the following.
    \begin{enumerate}
        \item  If $n=1,2,4,8$, let
   $  W_n = \{\mathsf L_\lambda  \mid \lambda \in \Lambda\}$,
   where \( \Lambda = \mathbb R, \mathbb C, \mathbb H, \mathbb O\), respectively.

          \item If $n = 2^{4a}$ with $a\ge 1$, let        $W_{n} = \left\{ \begin{bmatrix} rI_{2^{4a-1}} & A \\ A^T & -r I_{2^{4a-1}} \end{bmatrix} \,\middle|\, r\in \mathbb R,\, A\in W_{2^{4a-1}} \right\}$.

    \item If $n = 2^{4a+1}, 2^{4a+2}, 2^{4a+3}$ with $a\ge 1$, let         $W_n = \{\, A \oplus \mathsf L_\iota := A \otimes I_{\dim_\mathbb{R}\Lambda} + I_{2^{4a}} \otimes \mathsf L_\iota \, \mid \, A\in W_{2^{4a}}, \iota \in \mathrm{Im}(\Lambda) \,\}$, where $\Lambda = \mathbb C, \mathbb H, \mathbb O$, respectively.

        \item For general $n=2^{4a+b}(2c+1)$ with $a,b,c\geq 0$, let $W_n=\{A\otimes I_{2c+1}\,|\, A\in W_{2^{4a+b}}\}$.
    \end{enumerate}
    Then $W_n$ is invertible, and $\dim W_n=\rho(n)$.
\end{prop}
\begin{proof}
    We only check Case 3, as the other cases are immediate. Let $A\in W_{2^{4a}}$ and $\iota \in \mathrm{Im}(\Lambda)$, and assume $(A,\iota)\neq (0,0)$. 
    For any $A\oplus \mathsf{L}_\iota\in W_n$ with  $A\in W_{2^{4a}}$ and $\iota \in \mathrm{Im}(\Lambda)$, $\Lambda=\mathbb{C},\mathbb{H},\mathbb{O}$, we  assume $(A,\iota)\neq (0,0)$. As a basic property of the Kronecker sum, each eigenvalue of
$A\oplus \mathsf{L}_\iota$ is the sum of an eigenvalue of $A$ and an eigenvalue of $\mathsf{L}_\iota$.  Since the eigenvalues of
$\mathsf{L}_\iota$ are purely imaginary, while the eigenvalues of $A\in W_{2^{4a}}$ are real because $A$ is symmetric, all
eigenvalues of $A\oplus \mathsf{L}_\iota$ are nonzero. Hence $A\oplus \mathsf{L}_\iota$ is invertible. It follows directly from the construction that $\dim W_n = \rho(n)$.

\end{proof}

\begin{rema}
As Adams explains in \cite{Adams_1962_vectorfields}, the depth of Theorem \ref{adams theorem}, concerning the nonexistence of $\rho(n)$ independent vector fields on $S^{n-1}$, increases with $n$. By Proposition \ref{prop-three statements imply}, this yields the bound $\dim W_n \leq \rho(n)$ for any invertible vector space $W_n$ of $n \times n$ real matrices.\par

While Adams’ original proof of the maximal number of independent vector fields is technically involved, the same dimensional bound for invertible matrix spaces can also be obtained through approaches that do not require the full technical machinery of Adams'. In particular, as observed in \cite[Theorem 12]{petrovic_1996_spaces}, one may use the structure of the real $K$-group $KO(\mathbb{R}\boldsymbol{\mathrm{P}}^n)$ to derive $\dim W_n \leq \rho(n)$, where Bott periodicity is naturally encoded in $KO(\mathbb{R}\boldsymbol{\mathrm{P}}^n)$.
\end{rema}

\section{Decomposition lemma}\label{section of decomposition lemma}
In this section, one finds the key observation of this paper: by solving an elliptic system,  one can find a $\Phi$ such that $D-\frac{1}{2}(\nabla \Phi+(\nabla \Phi)^T)$ decomposes into fewer than $\frac{n(n+1)}{2}$ rank-one symmetric matrices for every $n\geq2$.

\subsection{Elimination lemma and nonnegative coefficient lemma}\label{section: two lemmas}

Recall we equip the space of symmetric matrices $ \mathbb{R}_{\mathrm{sym}}^{n\times n}$ with the Euclidean inner product 
\begin{equation}
    \langle A', A'' \rangle_{\mathbb{R}_{\mathrm{sym}}^{n\times n}} := \sum_{1 \le i \le j \le n} a_{ij}' a_{ij}'', \quad \text{for } A'=(a'_{ij}), A''=(a''_{ij}).
\end{equation}
This  defines the orthogonal subspace $L^\perp:=\{A'\in \mathbb{R}_{\mathrm{sym}}^{n\times n}\mid\,\langle A',A''\rangle_{\mathbb{R}^{n\times n}_{\mathrm{sym}}}=0,\, \forall\, A''\in L\}$ of $L \subset \mathbb{R}^{n\times n}_{\mathrm{sym}}$ and the orthogonal projection $\Pi_{L} : C^{j,\alpha}(\overline{\Omega},\mathbb{R}^{n\times n}_{\mathrm{sym}}) \longrightarrow C^{j,\alpha}(\overline{\Omega},L)$. 

\begin{lemm}[elimination lemma]\label{elimination lemma}
    Let $D\in C^{j,\alpha}(\overline{\Omega},\mathbb{R}_{\mathrm{sym}}^{n\times n})$. Suppose $L\subset \mathbb{R}_{\mathrm{sym}}^{n\times n}$ is  a subspace such that for every non-zero matrix $A \in L$, the matrix $A + \mathrm{diag}(A)$ is invertible.  Then there exists $\check{\Phi} \in C^{j+1,\alpha}(\overline{\Omega}, \mathbb{R}^n)$ such that
    \begin{equation}\label{elimination}
        \Pi_L\left(D+\sym\nabla\check{\Phi}\right)=0\quad \text{ in }\Omega.
    \end{equation}
\end{lemm}

\begin{proof}

Denote the formal adjoint of the operator $\sym\nabla: C^{j+1,\alpha}(\overline{\Omega},\mathbb{R}^n) \longrightarrow C^{j,\alpha}(\overline{\Omega},\mathbb{R}^{n\times n}_{\mathrm{sym}})$ by $(\sym\nabla)^*$, taken with respect to the $L^2$-inner products on $\mathbb{R}^n$ and $\mathbb{R}^{n\times n}_{\mathrm{sym}}$. We first compute its principal symbol $\sigma((\sym\nabla)^*)(\xi)A$ for any frequency vector $\xi=[\xi_1,\cdots,\xi_n]^T\in \mathbb{R}^n$ and matrix $A=(a_{ij})\in \mathbb{R}_{\mathrm{sym}}^{n\times n}$. 

Using the identification $\mathbb{R}^{n\times n}_{\mathrm{sym}}\cong \mathbb{R}^{\frac{n(n+1)}{2}}$ given by $A=(a_{ij})\mapsto [a_{12},a_{13},\cdots,a_{n-1,n},a_{11},\cdots,a_{nn}]^T$, we can express the principal symbol of the symmetric gradient as $\sigma(\sym\nabla)(\xi) = i B(\xi)$, where the matrix $B(\xi):\mathbb{R}^n\rightarrow\mathbb{R}^{\frac{n(n+1)}{2}}$ is defined as
\begin{equation*}
    B(\xi)=\frac{1}{2}\begin{bmatrix}
    \xi_2& \xi_1& 0&\cdots & 0\\
    \xi_3&0&\xi_1&\cdots&0\\
    \vdots&\vdots&\vdots&\ddots&\vdots\\
    0&0&\cdots&\xi_{n}&\xi_{n-1}\\ \hdashline[2pt/2pt]
    2\xi_1 &0&\cdots&0&0\\
    \vdots&\vdots&\vdots&\ddots&\vdots\\
    0&0&\cdots&0&2\xi_n
    \end{bmatrix}.
\end{equation*}

By taking the conjugate transpose, we find that the symbol of the adjoint operator is given by
\begin{equation*}
    \sigma((\sym\nabla)^*)(\xi) A = -iB(\xi)^T A = -\frac{i}{2} \begin{bmatrix}
      2a_{11}\xi_1+a_{12}\xi_2+\cdots+a_{1n}\xi_n \\
      a_{12}\xi_1+2a_{22}\xi_2+\cdots+a_{2n}\xi_n \\
      \vdots\\
      a_{1n}\xi_1+a_{2n}\xi_2+\cdots+2a_{nn}\xi_n
     \end{bmatrix} = -\frac{i}{2}\left(A+\diag(A)\right)\xi.
\end{equation*}

To solve \eqref{elimination}, we make the ansatz $\check{\Phi} = (\sym\nabla)^*\Pi_L^* u$ for a section $u \in C^{j+2,\alpha}(\overline{\Omega}, L)$, with $\Pi_L^*:C^{j,\alpha}(\overline{\Omega},L)\longrightarrow C^{j,\alpha}(\overline{\Omega},\mathbb{R}^{n\times n}_{\mathrm{sym}})$ being the inclusion map. Substituting this into  \eqref{elimination},  it is enough to solve the system
\begin{equation}\label{equ-elliptic system for u_i}
    \begin{cases} 
    \Pi_L \sym\nabla (\sym\nabla)^*\Pi_L^* u = -\Pi_L D \quad &\text{in } \Omega, \\
    u = 0 &\text{on } \partial\Omega.
    \end{cases}
\end{equation}
We now verify that $ \Pi_L \sym\nabla (\sym\nabla)^*\Pi_L^*$ is strongly elliptic. By assumption, $A+\diag(A)$ is invertible whenever $A\in L$ is nonzero, hence for $\xi\neq0$ and
$A\in L-\{0\}$,  we have
\begin{equation*}
    \langle A,\sigma( \Pi_L \sym\nabla (\sym\nabla)^*\Pi_L^* )(\xi)A\rangle_{\Rsn}=
    \left|\sigma((\sym\nabla)^*)(\xi)A\right|^2=
    \frac14\left|(A+\diag(A))\xi\right|^2 >0.
\end{equation*}
 Consider the function $\frac{\left|(A+\diag(A))\xi\right|^2}{|A|^2|\xi|^2}$ on $(L\setminus\{0\})\times(\mathbb R^n\setminus\{0\})$. It is  invariant under rescaling of both $A$ and $\xi$, therefore it is enough to restrict to the compact set $ \{(A,\xi)\in L\times\mathbb R^n:\ |A|=1,\ |\xi|=1\}.$  Since the function is strictly positive, it has a positive minimum. Thus there exists $\lambda>0$ such that $\frac14\left|(A+\diag(A))\xi\right|^2\geq\lambda |A|^2|\xi|^2$, which is exactly the \textbf{Legendre-Hadamard condition} as in Definition \ref{elliptic conditions}. By Theorem \ref{elliptic theorem}, the solvability of \eqref{equ-elliptic system for u_i} is established. For $\check\Phi=(\sym\nabla)^*\Pi_L^*u$, we obtain $\check\Phi\in C^{j+1,\alpha}(\overline\Omega,\mathbb R^n)$ and, by construction, $\Pi_L(D+\sym\nabla\check\Phi)=0$. This proves the lemma.
\end{proof}

\begin{rema}
    We remark that the formal adjoint has an explicit expression, $(\sym\nabla)^*A=-\frac{1}{2}\mathrm{div}(A+\diag (A))$, which can be obtained directly via integration by parts. For any compactly supported $v$, we have
    \begin{equation*}
\begin{aligned}
        \int_\Omega \langle \sym\nabla v, A \rangle_{\mathbb{R}^{n\times n}_{\mathrm{sym}}} \, dx 
        &= \int_\Omega \left( \sum_{i < j} \frac{1}{2}(\partial_i v_j + \partial_j v_i) a_{ij} + \sum_{i=1}^n (\partial_i v_i) a_{ii} \right) dx \\
        &= \frac{1}{2} \int_\Omega \sum_{i,j=1}^n (\partial_i v_j) (a_{ij} + \delta_{ij}a_{ii}) \, dx \\
        &= -\frac{1}{2} \int_\Omega \left\langle v, \mathrm{div}(A + \mathrm{diag}(A)) \right\rangle_{\mathbb{R}^n} \, dx.
    \end{aligned}
    \end{equation*}
    
\end{rema}

\begin{prop}\label{prop between two lemmas}
   The $\check{\Phi} $ constructed in Lemma \ref{elimination lemma} satisfies the estimate 
   
   \begin{equation}\label{estimates on check Phi}
       \lVert \check{\Phi}\rVert_{j+1,\alpha}\leq M_0 \lVert D\rVert_{j,\alpha},
   \end{equation}
   for some $M_0>0$ depending only on $j,\alpha,\Omega$, and $L$.
\end{prop}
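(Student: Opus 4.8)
\textbf{Plan for the proof of Proposition \ref{prop between two lemmas}.} The goal is to bound $\lVert \check\Phi\rVert_{j+1,\alpha}$ by $\lVert D\rVert_{j,\alpha}$. The plan is to trace through the construction of $\check\Phi$ in the proof of Lemma \ref{elimination lemma}, applying the elliptic estimate from Theorem \ref{elliptic theorem} together with the fact that $B$ and $B^T$ are constant-coefficient differential operators, hence bounded between the appropriate H\"older spaces. First I would recall that $\check\Phi = -2B^T\boldsymbol\eta\,\boldsymbol{\rm u}$, where $\boldsymbol{\rm u}\in C^{j+2,\alpha}(\overline\Omega,\mathbb{R}^{n_L})$ solves the Dirichlet problem \eqref{elliptic system for u_i}. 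Since $B^T$ is a first-order operator with constant coefficients and $\boldsymbol\eta$ is a constant matrix, taking $C^{j+1,\alpha}$ norms gives
\begin{equation*}
    \lVert \check\Phi\rVert_{j+1,\alpha} = 2\lVert B^T\boldsymbol\eta\,\boldsymbol{\rm u}\rVert_{j+1,\alpha}\leq C_1\lVert \boldsymbol{\rm u}\rVert_{j+2,\alpha},
\end{equation*}
where $C_1$ depends only on $n$ and the chosen unit basis $\eta_1,\dots,\eta_{n_L}$ of $L$.

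Next I would apply Theorem \ref{elliptic theorem} to the system \eqref{elliptic system for u_i}. The operator $\boldsymbol\eta^TBB^T\boldsymbol\eta$ is a constant-coefficient, second-order operator satisfying the Legendre–Hadamard condition \eqref{eq:L-H-cond} (this was established inside the proof of Lemma \ref{elimination lemma}, using the hypothesis on $L$), and $\partial\Omega\in C^2$ suffices after noting one may enlarge slightly or invoke the $C^{2,1}$ hypothesis as in the cited references; the forcing term is $\boldsymbol\eta^T[D]$. For the base case $j=0$, Theorem \ref{elliptic theorem} directly yields $\lVert\boldsymbol{\rm u}\rVert_{2,\alpha}\leq C\lVert\boldsymbol\eta^T[D]\rVert_{0,\alpha}\leq C'\lVert D\rVert_{0,\alpha}$, with $C$ depending only on $\Omega,\alpha,\lambda$. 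For general $j\geq 1$, I would bootstrap: differentiating the equation $k$ times for $1\le k\le j$ shows that each derivative $\partial^\gamma\boldsymbol{\rm u}$ solves the same elliptic system with forcing $\partial^\gamma(\boldsymbol\eta^T[D])$ plus (vanishing, since coefficients are constant) commutator terms, and with the boundary condition handled by standard difference-quotient / localization arguments near $\partial\Omega$. Iterating Theorem \ref{elliptic theorem} (or directly citing the $C^{j+2,\alpha}$ Schauder estimate for constant-coefficient Legendre–Hadamard systems, which follows from the same references \cite{giaquinta_introduction_2012}) gives
\begin{equation*}
    \lVert\boldsymbol{\rm u}\rVert_{j+2,\alpha}\leq C_2\lVert\boldsymbol\eta^T[D]\rVert_{j,\alpha}\leq C_3\lVert D\rVert_{j,\alpha},
\end{equation*}
with $C_2,C_3$ depending only on $j,\alpha,\Omega$ (through $\lambda$ and the geometry, and through the fixed basis of $L$, which in turn depends only on $n$). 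Combining with the previous display yields \eqref{estimates on check Phi} with $M_0 = 2C_1C_3$.

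The step I expect to require the most care is the higher-regularity bootstrap when $j\geq 1$: Theorem \ref{elliptic theorem} as stated only delivers $C^{2,\gamma}$ regularity, so to reach $C^{j+2,\alpha}$ one must either invoke the full scale of interior-plus-boundary Schauder estimates for constant-coefficient elliptic systems satisfying the Legendre–Hadamard condition, or carry out the difference-quotient argument by hand near the $C^2$ boundary, where the limited boundary regularity is the delicate point. Since the coefficients are constant, there are no commutator terms in the interior and the argument is essentially the classical one; the only genuine subtlety is matching the boundary regularity hypothesis of the cited theorems (stated for $\partial\Omega\in C^{2,1}$) with the $C^2$ hypothesis in force here, which is why the paper consistently extends via Whitney's theorem and works with mollified data — so in the write-up I would point to that mechanism or simply cite the constant-coefficient Schauder theory, for which a $C^2$ boundary is known to suffice at the level of $C^{1,\alpha}$ output for $\check\Phi$.
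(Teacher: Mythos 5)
Your proposal is correct and follows essentially the same route as the paper: apply the Schauder estimate for the constant–coefficient Legendre–Hadamard system to bound $\|\boldsymbol{\rm u}\|_{j+2,\alpha}$ by $\|\boldsymbol\eta^T[D]\|_{j,\alpha}\lesssim\|D\|_{j,\alpha}$, then use that $\check\Phi=-2B^T\boldsymbol\eta\,\boldsymbol{\rm u}$ is one constant–coefficient derivative of $\boldsymbol{\rm u}$ to pass to $\|\check\Phi\|_{j+1,\alpha}$. If anything, your chain $\|\check\Phi\|_{j+1,\alpha}\lesssim\|\boldsymbol{\rm u}\|_{j+2,\alpha}\lesssim\|D\|_{j,\alpha}$ is slightly more direct than the paper's, which first goes through $\|\sym\nabla\check\Phi\|_{j,\alpha}=\|BB^T\boldsymbol\eta\,\boldsymbol{\rm u}\|_{j,\alpha}$ before appending the low-order piece $\|\check\Phi\|_\alpha\lesssim\|\boldsymbol{\rm u}\|_2$.

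One point worth preserving from your write-up: you explicitly flag that Theorem~\ref{elliptic theorem} as stated only delivers the $j=0$ estimate $\|\boldsymbol{\rm u}\|_{2,\gamma}\lesssim\|f\|_{0,\gamma}$, so that the case $j\geq 1$ requires the full scale of higher-order Schauder estimates (or a bootstrap), together with reconciling the paper's $C^2$ boundary hypothesis with the $C^{2,1}$ hypothesis in the cited Theorem~5.25 of \cite{giaquinta_introduction_2012}. The paper's proof simply writes $\|\boldsymbol{\rm u}\|_{j+2,\alpha}\lesssim\|\boldsymbol\eta^T[D]\|_{j,\alpha}$ without comment, so your note makes explicit a step the paper takes for granted. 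Since the coefficients are constant this is indeed standard, but you are right that it is the one place where care is required; the rest is bookkeeping with constant matrices and first-order operators.
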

\begin{proof}
    By \eqref{equ-elliptic system for u_i} and Theorem \ref{elliptic theorem}, we have
    \begin{equation*}
        \lVert \check{\Phi}\rVert_{j+1,\alpha}= \|(\sym\nabla)^*\Pi_L^*u\|_{j+1,\alpha}
        \lesssim  \|u\|_{j+2,\alpha}
        \lesssim \|\Pi_L D\|_{j,\alpha}\lesssim \|D\|_{j,\alpha}.\qedhere
\end{equation*}
\end{proof}

The following proposition shows that the requirement for $A + \mathrm{diag}(A)$ to be invertible for every non-zero matrix $A \in L$ is also necessary for the estimates of the form \eqref{estimates on check Phi}, in the sense that no uniform control could hold for all right-hand sides if the requirement fails.

\begin{prop}\label{prop-oscillatory construction for D}
    Let $L \subset \mathbb{R}^{n \times n}_{\mathrm{\mathrm{sym}}}$ be a linear subspace. Suppose there exists a non-zero matrix $A \in L$ such that $\det(A+\diag(A))=0.$
    Then, for any $M>0$, there exists
    $D\in C^{j,\alpha}(\overline{\Omega},\mathbb R^{n\times n}_{\mathrm{\mathrm{sym}}})$
    such that either the equation
    \begin{equation*}
        \Pi_L(\sym\nabla\check{\Phi}+D)=0
    \end{equation*}
    has no solution, or every solution
    $\check{\Phi}\in C^{j+1,\alpha}(\overline{\Omega},\mathbb R^n)$
    satisfies $\|\check{\Phi}\|_{j+1,\alpha}>M\|D\|_{j,\alpha}$.
    \end{prop}
\begin{proof}
Let $A \in L$ be a non-zero matrix  such that $\det(A+\diag(A))=0$. Then there exists a non-zero frequency vector $\xi^{(0)}\in\mathbb R^n$ such that
\begin{equation}\label{equ-degeneracy of symbol}
    (A+\diag(A))\xi^{(0)}=0.
\end{equation}
Recall that $\sigma((\mathrm{\mathrm{sym}}\nabla)^*)(\xi)A = -\frac{i}{2}(A + \mathrm{diag}(A))\xi$. Therefore, \eqref{equ-degeneracy of symbol} precisely yields 

\begin{equation}\label{equ-sigma degeneracy of symbol}
    \sigma((\Pi_L\sym\nabla)^*)(\xi^{(0)})A=\sigma((\sym\nabla)^*)(\xi^{(0)}) A=0.
\end{equation}

 Choose $\phi\in C_c^\infty(\Omega)$, $\phi\not\equiv 0$, and define
$ D_\lambda(x)=\phi(x)\sin(\lambda\xi^{(0)}\cdot x)A.$
Since $A\in L$, we have $\Pi_LD_\lambda=D_\lambda$. Moreover,
$  \|D_\lambda\|_{C^{j,\alpha}}\lesssim \lambda^{j+\alpha}$.\par 

We claim that if $\check{\Phi}_\lambda$ solves
$  \Pi_L(\sym\nabla\check{\Phi}_\lambda+D_\lambda)=0,$
then $\|\check{\Phi}_\lambda\|_{C^{j+1,\alpha}}
    \gtrsim \lambda^{j+1}.$ Indeed, let
$    e=\frac{\xi^{(0)}}{|\xi^{(0)}|},$
since $\Pi_L\sym\nabla$ has constant coefficients, in the sense of distributions,
$\Pi_L\sym\nabla(\partial_e^{j+1}\check{\Phi}_\lambda)=-\partial_e^{j+1}D_\lambda $.
Since the expansion $\partial_e^{j+1}D_\lambda=    \lambda^{j+1}|\xi^{(0)}|^{j+1}\phi(x)\sin(\lambda\xi^{(0)}\cdot x+\theta)A  +O(\lambda^j)$
in $C^0(\overline{\Omega})$, for the phase $\theta=(j+1)\pi/2$, by choosing the test function
$  \Psi_\lambda(x)=   \phi(x)\sin(\lambda\xi^{(0)}\cdot x+\theta)A ,$
then for $\lambda$ large we have
$$\int_\Omega
    \langle \partial_e^{j+1}D_\lambda,\Psi_\lambda\rangle\,dx
    \gtrsim \lambda^{j+1}.$$\par
On the other hand, 
denoting $(\Pi_L \sym\nabla)^*= \sum_{k=1}^n B_k \partial_k $ for some constant linear maps $B_k:\mathbb{R}^{n\times n}_{\mathrm{sym}}\rightarrow \mathbb{R}^n$, we then have $(\Pi_L \sym\nabla)^*\Psi_\lambda=\lambda \phi(x)\sin(\lambda \xi^{(0)}\cdot x+\theta)\sum_{i=1}^n\xi^{(0)}_kB_kA+O(1)=O(1)$, where $\xi_k^{(0)}$ is the $k$-th component of $\xi^{(0)}$, and we used \eqref{equ-sigma degeneracy of symbol}. Hence $   \|(\Pi_L\sym\nabla)^*\Psi_\lambda\|_{L^1}\lesssim 1$ uniformly in $\lambda$, which gives

\begin{equation*}
\begin{aligned}
    \int_\Omega
    \langle \partial_e^{j+1}D_\lambda,\Psi_\lambda\rangle\,dx &=
    -\int_\Omega    \langle \Pi_L\sym\nabla(\partial_e^{j+1}\check{\Phi}_\lambda),\Psi_\lambda\rangle\,dx  \\
    &=    -\int_\Omega    \langle \partial_e^{j+1}\check{\Phi}_\lambda,(\Pi_L\sym\nabla)^*\Psi_\lambda\rangle\,dx \lesssim\|\partial_e^{j+1}\check{\Phi}_\lambda\|_{C^0}.
\end{aligned}
\end{equation*} 
Therefore $ \lambda^{j+1}\lesssim 
\|\partial_e^{j+1}\check{\Phi}_\lambda\|_{C^0}\leq 
\|\check{\Phi}_\lambda\|_{C^{j+1,\alpha}}   $.

Combining the claim with
$   \|D_\lambda\|_{C^{j,\alpha}}\lesssim \lambda^{j+\alpha},$
we obtain
$    \frac{\|\check{\Phi}_\lambda\|_{C^{j+1,\alpha}}}{\|D_\lambda\|_{C^{j,\alpha}}}\gtrsim \lambda^{1-\alpha}$, which tends to infinity as $\lambda\to\infty$. Therefore, for any $M>0$, choosing $\lambda$ sufficiently large gives
$
    \|\check{\Phi}_\lambda\|_{C^{j+1,\alpha}}
    >
    M\|D_\lambda\|_{C^{j,\alpha}},
$ for this $D_\lambda$, either no solution exists, or every solution satisfies the claimed lower bound.
\end{proof}
\begin{lemm}[nonnegative coefficient lemma]\label{nonnegative coefficient lemma}
    Assume that $U\subset\mathbb{R}^{n\times n}_{\mathrm{sym}}$ is
   an $n_U$-dimensional subspace with a fixed basis
$\xi_1\otimes\xi_1,\ldots,\xi_{n_U}\otimes\xi_{n_U}$.  Then there exist  constant $M_1>0$ such that  for every  $\hat{D}\in C^{j,\alpha}(\overline{\Omega},U)$ there exists a linear function $\hat{\Phi}\in C^{\infty}(\overline{\Omega},\mathbb{R}^n)$ and functions $a_i \in C^{j,\alpha}(\overline{\Omega},\mathbb{R})$ satisfying
    \begin{equation}\label{coeffient equation}
\hat{D}+\sym\nabla\hat{\Phi}=\sum_{i=1}^{n_U} a_i^2 \xi_i\otimes\xi_i \ .
    \end{equation}
Moreover, $\nabla \hat{\Phi}$ is a constant matrix on $\Omega$, and  
\begin{equation}\label{nonnegative coefficients estimates}
\begin{aligned}
    \lVert\sum_{i=1}^{n_U} a_i^2 \xi_i\otimes\xi_i\rVert_{j,\alpha} +|\nabla\hat{\Phi}|+\|\hat{\Phi}\|_\alpha\leq M_1\lVert \hat{D}\lVert_{j,\alpha}.
\end{aligned}
\end{equation}
\end{lemm}
\begin{proof}
Write $\hat{D}=\sum_{i=1}^{n_U} \hat{a}_i \xi_i\otimes \xi_i$ with $\hat{a}_i\in C^{j,\alpha}(\overline{\Omega},\mathbb{R})$ that are not necessarily nonnegative. Since $\{\xi_i\otimes\xi_i\}$ is a fixed  basis of $U$,  there exists a constant $C(\xi_i)>0$, depending only on this basis, such that
    $\|\hat{a}_i\|_0\leq C(\xi_i)\|\hat{D}\|_0$ for any $i\leq n_U$. Denote  $\sigma_0:=\max_{i}\|\hat{a}_i\|_0$.  If $\sigma_0=0$, the statement is trivial. Assume now $\sigma_0>0$, we have 
\begin{equation*}
    a_i^2:=\hat{a}_i+2\sigma_0>0\quad \text{for some }\  a_i\in C^{j,\alpha}(\overline{\Omega},\mathbb{R}). 
\end{equation*}
For $x\in \overline{\Omega}$, letting
    \begin{equation}
        \hat{\Phi}=[\sum_{j=1}^n(\sum_{i=1}^{n_U}2\sigma_{0}\xi_i\otimes\xi_i)_{1j}x_j,\cdots,\sum_{j=1}^n(\sum_{i=1}^{n_U}2\sigma_{0}\xi_i\otimes\xi_i)_{nj}x_j]^T=2\sigma_0(\sum_{i=1}^{n_U}\xi_i\otimes\xi_i)x, 
    \end{equation}
    we obtain the linear function $\hat{\Phi}\in C^\infty(\overline{\Omega},\mathbb{R}^n)$ satisfying \eqref{coeffient equation}, with $\nabla \hat{\Phi}$  a constant matrix.\par

    Moreover,  there exists $C(n,\Omega,\alpha,\xi_i)>0$ depending on $n,\Omega,\alpha$ and the basis $\{\xi_i\otimes\xi_i\}$ such that
    \begin{equation}
|\nabla\hat{\Phi}|+\|\hat{\Phi}\|_\alpha=2\sigma_{0}|\sum_{i=1}^{n_U}\xi_i\otimes\xi_i |(1+n\|x\|_0)+[\hat{\Phi}]_\alpha\leq C(n,\Omega,\alpha,\xi_i)\|\hat{D}\|_\alpha,  
    \end{equation}
        \begin{equation}
 \lVert\sum_{i=1}^{n_U} a_i^2 \xi_i\otimes\xi_i\rVert_{j,\alpha}\leq\|\hat{D}\|_{j,\alpha}+|\nabla\hat{\Phi}|\leq (C(n,\Omega,\alpha,\xi_i)+1)\|\hat{D}\|_{j,\alpha}.
    \end{equation}
    Thus the estimate in \eqref{nonnegative coefficients estimates} follows.
\end{proof}
Lemma \ref{elimination lemma} removes a certain component of $D$, provided $L\subset\Rsn$ satisfies the  invertibility condition, by adding a controlled $\sym\nabla\check{\Phi}$. The remaining part lies in $U=L^\perp$, and serves as input to Lemma \ref{nonnegative coefficient lemma}. Hence it remains to construct such  $L$ with $L^\perp$ spanned by primitive matrices and with $\dim L$ maximal. This is done in Section \ref{section of intersections}, where we also  complete the proof of Lemma \ref{Decomposition Lemma}.

\subsection{Algebraic description}\label{subsection-algebraic description}
A feature of our approach is to reduce the analytic problem of improving the regularity in \eqref{ddagger} and \eqref{NK} to an algebraic optimization problem, via a decomposition method based on elliptic systems.  More precisely,  we formulate the algebraic problem as follows.

 \begin{prob}\label{affine problem}
Let $L \subset \mathbb{R}_{\mathrm{sym}}^{n\times n}$ be a subspace of real symmetric matrices with $\dim L = n_L$. Suppose $L$ satisfies the following two properties:
\begin{enumerate}[label=\textcolor{blue}{\arabic*.}, ref=\arabic*]
\item For every non-zero matrix $A \in L$, the matrix $A + \operatorname{diag}(A)$ is invertible.\label{condition 1 in affine problem}
    \item The subspace $L^\perp$ is spanned by rank-one symmetric matrices $\xi_1\otimes\xi_1, \dots, \xi_m\otimes\xi_m \in\mathbb{R}^{n\times n}_{\mathrm{sym}}$, where $m=\frac{n(n+1)}{2}-n_L$.\label{condition 2 in affine problem}
\end{enumerate}

 For each $n$, find an $L$ that maximizes $n_L$.
 \end{prob} Recall that the condition \ref{condition 1 in affine problem} is needed in the elimination lemma \ref{elimination lemma} to ensure that the constructed system \eqref{equ-elliptic system for u_i} is elliptic, and the second condition is needed in the nonnegative coefficient lemma \ref{nonnegative coefficient lemma} to ensure $\hat{D}\in L^\perp$.

 It is natural to consider Problem \ref{affine problem} under projectification and in the language of intersection. Denote 
\begin{equation}\label{defi of Y}
    \bY:=\{
    [y_{ij}]\in\PRs\ \big| \mathrm{ det}\left((y_{ij})+\mathrm{diag}(y_{ij}) \right)=0\}\subset \boldsymbol{\rm P}(\mathbb{R}_{\mathrm{sym}}^{n\times n}),
\end{equation}
thus $\bY$ is a degree $n$ hypersurface in $\boldsymbol{\rm P}(\mathbb{R}_{\mathrm{sym}}^{n\times n})\cong\mathbb{R}\boldsymbol{\rm P}^{\frac{n(n+1)}{2}-1}$. Additionally, denote 
\begin{equation}\label{defi of Z}
    \bZ:=\{[w_1w_2,\cdots,w_{n-1}w_n,w_1^2,\cdots, w_n^2]\ \big| [w_1,\cdots,w_n]\in \mathbb{R}\boldsymbol{\rm P}^{n-1}\}\subset \boldsymbol{\rm P}(\mathbb{R}_{\mathrm{sym}}^{n\times n})^\vee,
\end{equation}
namely $\bZ$ is the image of $\mathbb{R} \bP^{n-1}$ in $\boldsymbol{\rm P}(\mathbb{R}_{\mathrm{sym}}^{n\times n})^\vee$ via \textbf{Veronese embedding} $\iota: \mathbb{R}\bP^{n-1}\rightarrow \boldsymbol{\rm P}(\mathbb{R}_{\mathrm{sym}}^{n\times n})^\vee$. We denote $\bL=\bP(L)$. Using the inner product $\langle \,,\,\rangle_{\mathbb{R}^{n\times n}_{\mathrm{sym}}}$ to identify
$\mathbb P(\mathbb R^{n\times n}_{\mathrm{\mathrm{sym}}})$ with its dual projective space.  Under this identification, the projective dual of $\bL$ is given by
\begin{equation*}
    \bL^\vee=\mathbf P(L^\perp),
\end{equation*}
 with the precise definition of projective duality  deferred to Section \ref{sec-projective duality}. Recall that
 a variety $X\subset \RPn$ is said to be \textbf{nondegenerate} if it is not contained in any hyperplane of $\RPn$, and it is equivalent to the existence of $n+1$ points in $X$ that span $\RPn$. Thus the following gives an equivalent formulation of Problem \ref{affine problem}, with the dimension shift $n_{\bL}=n_L-1$ due to projectivization.
 
\begin{prob}\label{proj problem}
    Consider an $n_{\bL}$ dimensional linear subspace $\bL$ of $\bP(\Rsn)$, and its dual space $\bL^\vee$.  Let $\bL$ satisfy the following conditions on $\bY\subset \bP(\mathbb{R}^{n\times n}_{\mathrm{sym}})$ and $\bZ\subset \bP(\mathbb{R}^{n\times n}_{\mathrm{sym}})^\vee$:
    \begin{enumerate}
        \item $\bL\cap \bY=\varnothing$, \quad and
        \item  $\bL^\vee \cap \bZ$  is nondegenerate in $\bL^\vee$. 
    \end{enumerate}
    For each $n$, find an $\bL$ that maximizes $n_{\bL}$.
\end{prob}
\begin{rema}
    The fact of considering the intersection Problem \ref{proj problem}  over the field $\mathbb{R}$ instead of $\mathbb{C}$ makes it interesting. Over the field $\mathbb{C}$, since  $\bY$ is a hypersurface of $\mathbb{C}\bP^{\frac{n(n+1)}{2}-1}$, by classical intersection theory we have $\bL\cap \bY\neq \varnothing$ for any linear subspace $\bL$ with $\dim_{\mathbb{C}}\bL> 0$. 
\end{rema}

\section{The intersections in real projective spaces}\label{section of intersections}
\subsection{Projective duality}\label{sec-projective duality}
Projective duality is a basic concept in algebraic geometry that describes a symmetry between a projective variety and the locus of its tangent hyperplanes. This elegant framework is valid over both the real and complex fields and provides a systematic way to recover a variety from its dual. For background, we refer the reader to \cite{harris_algebraic_1992, tevelev_rojective_2004}; the necessary notions are briefly recalled below.\par
Let $X \subset \mathbb{R}\bP^m$ be a projective variety. A hyperplane $H \subset \mathbb{R}\bP^m$ is called a \textbf{tangent hyperplane} if it contains the tangent space to $X$ at some smooth point. Since each hyperplane in $\mathbb{R}\bP^m$ corresponds to a unique point in the dual space $(\mathbb{R}\bP^m)^\vee$, we define the \textbf{dual variety} $X^\vee \subset (\mathbb{R}\bP^m)^\vee$ as the closure of the locus of all tangent hyperplanes to $X$ at its smooth points. We recall the following fundamental results from projective duality.\par
\begin{theo}[Reflexivity Theorem]\quad
        $(X^\vee)^\vee=X.$
\end{theo}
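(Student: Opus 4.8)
The plan is to describe $X^\vee$ through the point--hyperplane incidence (\emph{conormal}) variety and to exploit the symmetry of that variety under interchanging a projective space with its dual.

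I would first set up the \textbf{conormal variety} $\cN_X\subseteq \RP^m\times(\RP^m)^\vee$, defined as the Zariski closure of the set of pairs $(p,[H])$ such that $p$ is a smooth point of $X$ and the hyperplane $H$ contains the embedded tangent space $T_pX$. Writing $\pi_1,\pi_2$ for the two projections, one has $\pi_1(\cN_X)=X$ and, by the very definition of the dual variety, $\pi_2(\cN_X)=X^\vee$ (both images are closed, since $\cN_X$ is a projective variety). Under the canonical identification $((\RP^m)^\vee)^\vee=\RP^m$, the conormal variety of $X^\vee$ sits in $(\RP^m)^\vee\times\RP^m$ and satisfies $\pi_2(\cN_{X^\vee})=(X^\vee)^\vee$. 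Hence the theorem follows once one proves the \emph{symmetry statement}
\[
\tau(\cN_X)=\cN_{X^\vee},\qquad \tau(p,[H]):=([H],p),
\]
because then $(X^\vee)^\vee=\pi_2(\cN_{X^\vee})=\pi_2(\tau(\cN_X))=\pi_1(\cN_X)=X$. (If $X$ is reducible one applies this to each irreducible component.)

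To prove the symmetry statement I would pass to affine cones. Set $V=\RR^{m+1}$, let $\widehat{X}\subset V$ and $\widehat{X^\vee}\subset V^*$ be the cones over $X$ and $X^\vee$, and let $\Lambda_X\subset V\times V^*=T^*V$ be the closure of the conormal bundle $\{(v,\xi):\ v\in\widehat X\ \text{smooth},\ \xi|_{T_v\widehat X}=0\}$; projectivizing $\Lambda_X$ recovers $\cN_X$. Two classical facts do the work. First, $\Lambda_X$ is an irreducible conic \emph{Lagrangian} subvariety of $T^*V$ for the standard symplectic form $((\dot v_1,\dot\xi_1),(\dot v_2,\dot\xi_2))\mapsto\dot\xi_1(\dot v_2)-\dot\xi_2(\dot v_1)$; equivalently, at a general point $(v,\xi)$ the tangent space $T_{(v,\xi)}\Lambda_X$ is half-dimensional and isotropic. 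Second, $Y\mapsto\overline{N^*Y_{\mathrm{sm}}}$ is a bijection between irreducible closed subvarieties of $V$ and irreducible conic Lagrangians in $T^*V$, with inverse the projection to $V$. Granting these, the anti-symplectomorphism $T^*V\cong T^*(V^*)$, $(v,\xi)\mapsto(\xi,v)$, carries $\Lambda_X$ to a conic Lagrangian of $T^*(V^*)$; its projection to $V^*$ is tautologically the cone over $\{[\xi]:\ \ker\xi\supseteq T_pX\ \text{for some smooth}\ p\in X\}$, i.e.\ $\widehat{X^\vee}$, so by the second fact this Lagrangian must equal $\Lambda_{X^\vee}=\overline{N^*\widehat{X^\vee}_{\mathrm{sm}}}$. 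Projectivizing gives $\tau(\cN_X)=\cN_{X^\vee}$.

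I expect the Lagrangian property (equivalently, the tangent-space symmetry) to be the main obstacle: one must verify that for a general pair $(p,[H])\in\cN_X$, the variety $X^\vee$ is smooth at $[H]$ with embedded tangent space contained in the dual hyperplane $\{[H']:\ p\in H'\}$, so that the first-order tangency data is genuinely symmetric in $p$ and $[H]$. In cone coordinates this amounts to differentiating $\xi(t)\bigl(w(t)\bigr)\equiv 0$ along arcs in $\Lambda_X$ with $w(t)\in T_{v(t)}\widehat X$ and using the Euler relation $v(t)\in T_{v(t)}\widehat X$; performing the computation at a \emph{general} smooth point lets the dimension count ($\dim\cN_X=\dim\cN_{X^\vee}=m-1$) upgrade the resulting generic containment to the equality $\tau(\cN_X)=\cN_{X^\vee}$. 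A secondary subtlety, specific to the ground field $\RR$, is that these ``general point'' arguments need the real smooth locus of $X$ to be Zariski dense in $X$; this holds for the varieties to which we apply the theorem (the discriminant hypersurface $\bY$ and the Veronese $\bZ$), and in general one may first argue over $\CC$ and then descend, the defining equations of the dual variety being defined over $\RR$. All of this is carried out in detail in \cite{harris_algebraic_1992,tevelev_rojective_2004}.
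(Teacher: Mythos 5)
The paper does not prove this statement: it is quoted as a classical fact of projective duality with references to \cite{harris_algebraic_1992} and \cite{tevelev_rojective_2004}, so there is no in-paper argument to compare against. Your proof via the conormal variety, its Lagrangian characterization in $T^*V$, and the swap $T^*V\cong T^*(V^*)$ is exactly the standard argument given in those references, and it is correct in characteristic zero; the one real-field caveat (Zariski density of real smooth points, or descent from $\CC$) is genuine but you flag it and it is satisfied by the varieties $\bY$ and $\bZ$ to which the paper applies the theorem.
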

\begin{prop}[Principle of duality]\label{principle of duality}
Let $\mathrm{pt}_1$, $\mathrm{pt}_2$ be two points and  $\bL$ be a linear subspace of $\RP^m$. If $\mathrm{pt}_1,\mathrm{pt}_2\in \bL\subset\RP^m$, then 
\begin{equation*}
    \bL^\vee\subset(\mathrm{pt}_1^\vee\cap \mathrm{pt}_2^\vee)\subset (\RP^m)^\vee.
\end{equation*}

\end{prop}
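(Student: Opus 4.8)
The plan is to reduce the statement to the elementary fact that passing to annihilators (equivalently, orthogonal complements) reverses inclusions, and then to verify it by chasing a single element. First I would fix notation: write $\RP^m=\bP(V)$ with $V=\RR^{m+1}$, realize the linear subspace as $\bL=\bP(W)$ for a linear subspace $W\subseteq V$, and choose representatives $v_1,v_2\in V$ with $x_i=[v_i]$, so that the hypothesis $x_1,x_2\in\bL$ becomes $v_1,v_2\in W$. Then I would recall that $\bL^\vee=\bP(W^\vee)$, where $W^\vee\subseteq V^\vee$ is the annihilator of $W$ (equivalently, under the inner product identification $V^\vee\cong\RR^{m+1}$ used throughout the paper, the orthogonal complement of $W$), and that $x_i^\vee=\bP(\{\phi\in V^\vee:\phi(v_i)=0\})$ is the corresponding dual hyperplane inside $(\RP^m)^\vee$.

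With these identifications the first inclusion is immediate: given any $[\phi]\in\bL^\vee$, the functional $\phi$ kills every vector of $W$, and since $v_1,v_2\in W$ this forces $\phi(v_1)=\phi(v_2)=0$, i.e.\ $[\phi]\in x_1^\vee\cap x_2^\vee$; as $[\phi]$ was arbitrary, $\bL^\vee\subseteq x_1^\vee\cap x_2^\vee$. The second inclusion $x_1^\vee\cap x_2^\vee\subseteq(\RP^m)^\vee$ needs no argument, since each $x_i^\vee$ is by definition a subset of $(\RP^m)^\vee$. If one prefers a coordinate-free phrasing --- the one that justifies the name ``principle of duality'' --- one can instead observe that $W\mapsto W^\vee$ is inclusion-reversing and that the annihilator of a span is the intersection of the annihilators: putting $W_0:=\mathrm{span}(v_1,v_2)\subseteq W$ (the projective line, or point, joining $x_1$ and $x_2$), inclusion-reversal gives $\bL^\vee=\bP(W^\vee)\subseteq\bP(W_0^\vee)$, while $W_0^\vee=\{v_1\}^\vee\cap\{v_2\}^\vee$ identifies $\bP(W_0^\vee)$ with $x_1^\vee\cap x_2^\vee$.

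I do not expect any genuine obstacle: the argument is purely formal, with nothing left to estimate or to construct. The only point that warrants a moment's care is that the symbol $\vee$ is carrying two meanings at once --- annihilator inside $V^\vee$ and orthogonal complement inside $\RR^{m+1}$ --- but since these are intertwined by the fixed inner product, the argument is insensitive to which one is used, and the edge case $\bL=\varnothing$ is harmless because then the hypothesis $x_1,x_2\in\bL$ is vacuous.
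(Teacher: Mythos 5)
Your argument is correct. The paper does not actually prove this proposition --- it is stated as a classical fact of projective duality with references to Harris and Tevelev --- and your annihilator/orthogonal-complement argument is precisely the standard verification, consistent with the paper's own convention $L^\vee=\{b:\langle b,a\rangle=0,\ \forall a\in L\}$ from Section 3.1.
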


Recall  $\bY\subset \bP(\mathbb{R}_{\mathrm{sym}}^{n \times n})$ defined in \eqref{defi of Y}, $\bZ\subset \bP(\mathbb{R}_{\mathrm{sym}}^{n \times n})^\vee$ defined  in \eqref{defi of Z}. Notice that $\bZ$ is a \textit{determinantal variety of rank one} in $\bP(\mathbb{R}_{\mathrm{sym}}^{n \times n})^\vee$. Its singular locus is contained in the determinantal variety of rank $0$, which is $\varnothing \in \bP(\mathbb{R}_{\mathrm{sym}}^{n \times n})^\vee$, hence $\bZ$ is smooth. By the classical theory of quadratic forms, we can also view $\bZ\subset\bP(\mathbb{R}_{\mathrm{sym}}^{n \times n})^\vee$ as the set of quadratic forms, and $\bY\subset \PRs$ as the \textit{discriminant variety $\Delta$} of $\bZ$. This yields the following.
\begin{prop}$\bY^\vee=\bZ$. 
\end{prop}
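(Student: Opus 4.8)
The plan is to show that $\bY$, the hypersurface of matrices whose ``diagonal-doubled'' determinant vanishes, is exactly the dual variety of the Veronese-type variety $\bZ$. First I would make the identification of $\bZ\subset\bP(\Rsn)^\vee$ with the space of quadratic forms in $n$ variables completely explicit: a point $[w_1,\dots,w_n]\in\RP^{n-1}$ maps to the rank-one quadratic form $q_w(x)=(\sum_i w_ix_i)^2$, whose coefficient vector is $[w_1w_2,\dots,w_{n-1}w_n,w_1^2,\dots,w_n^2]$. Under the pairing $\langle\,\cdot\,,\,\cdot\,\rangle$ on $\mathbb{R}^{n(n+1)/2}$ used in the excerpt, a symmetric matrix $A=(a_{ij})\in\Rsn$ pairs with this coefficient vector as $\sum_{i<j}a_{ij}w_iw_j+\sum_i a_{ii}w_i^2$; observe that this equals $\tfrac12\,w^T C\, w$ where $C=(a_{ij})+\mathrm{diag}(a_{11},\dots,a_{nn})$ is precisely the ``diagonal-doubled'' matrix appearing in the definition of $\bY$. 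So the bilinear pairing between $\bP(\Rsn)$ and $\bP(\Rsn)^\vee$, restricted to $\bZ$, is governed by exactly the quadratic form $C$ that defines $\bY$.

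Next I would compute the dual variety $\bZ^\vee\subset\bP(\Rsn)$ directly. Since $\bZ$ is the image of the smooth Veronese embedding $\iota:\RP^{n-1}\to\bP(\Rsn)^\vee$ and (as noted in the excerpt, via the determinantal description) $\bZ$ is smooth, its dual variety is the closure of the set of hyperplanes tangent to $\bZ$ at a smooth point. A hyperplane in $\bP(\Rsn)^\vee$ is given by a point $[A]\in\bP(\Rsn)$ via the pairing above, i.e.\ by the equation $w\mapsto w^TCw=0$ on the Veronese cone. Such a hyperplane is tangent to $\bZ$ at the point $\iota([w])$ precisely when, in addition to $w^TCw=0$, the gradient condition $Cw=0$ holds — this is the standard computation that the hyperplane section $\{q\in\bZ:\langle A,q\rangle=0\}$ is singular at $\iota([w])$ iff $w$ is in the kernel of $C$. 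Hence there is a nonzero $w$ with $Cw=0$, which is exactly the condition $\det C=0$. Therefore $\bZ^\vee=\{[A]:\det((a_{ij})+\mathrm{diag}(a_{ii}))=0\}=\bY$. Applying the Reflexivity Theorem then gives $\bY^\vee=(\bZ^\vee)^\vee=\bZ$, which is the claim.

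The main obstacle I anticipate is justifying the tangency computation carefully, rather than any single algebraic identity. Two points need attention. First, the Veronese embedding here lands in $\bP(\Rsn)^\vee$ with coordinates $w_iw_j$ ($i\le j$) but \emph{without} the usual factor of $2$ on off-diagonal entries, so the pairing with $A$ produces the unsymmetrized sum $\sum_{i<j}a_{ij}w_iw_j+\sum_i a_{ii}w_i^2$, and it is precisely this asymmetry that is absorbed by the ``diagonal-doubled'' matrix $C$ — so the bookkeeping of factors of $2$ must be done consistently, and the calculation in the excerpt's Lemma \ref{elimination lemma} (where $C=(a_{ij})+\mathrm{diag}(a_{ii})$ already appears) is the right template. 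Second, one must check smoothness of $\bZ$ (so that ``tangent at a smooth point'' is the whole of $\bZ$) and that the locus of tangent hyperplanes is already closed of the expected dimension, i.e.\ that no closure operation enlarges it; this follows from $\bZ$ being the rank-one determinantal variety whose singular locus (rank $\le 0$) is empty, as already observed in the excerpt, together with the fact that the generic hyperplane section of the Veronese has exactly one singular point. With these two points handled, the identification $\bZ^\vee=\bY$ and hence $\bY^\vee=\bZ$ follows, and one may alternatively cite the classical fact that the dual of the Veronese variety of quadrics is the discriminant hypersurface (see \cite{harris_algebraic_1992,tevelev_rojective_2004}).
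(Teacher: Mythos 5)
Your proposal is correct and is essentially the argument the paper has in mind: the paper simply invokes the classical fact that $\bZ$ is the Veronese variety of quadratic forms and $\bY$ its discriminant hypersurface, while you carry out the standard tangency computation (the pairing $\langle [A],\iota([w])\rangle=\tfrac12 w^TCw$ with $C$ the diagonal-doubled matrix, tangency $\Leftrightarrow Cw=0$ for some $w\neq0$ $\Leftrightarrow\det C=0$) to get $\bZ^\vee=\bY$ and then apply reflexivity. The factor-of-$2$ bookkeeping you flag is exactly why the paper's $\bY$ is defined via the diagonal-doubled determinant, so your verification is consistent with the source.
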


The following transversality result for $\bZ\cap \bL^\vee$ is an immediate consequence of projective duality. Recall that two smooth subvarieties $X,Y\subset \mathbb{R}\bP^m$ are said to \textbf{intersect transversely} if $T_pX+ T_pY=T_p\mathbb{R}\bP^m$ for every $p\in X\cap Y$.

\begin{prop}\label{prop-Z intersect L^vee transversally}
    Let $\bL$ be a linear subspace of $\bP(\Rsn)$. If $\bL\cap \bY=\varnothing$ and $\bL^\vee \cap \bZ\neq \varnothing$, then $\bZ $ intersects $\bL^\vee$ transversally.
\end{prop}
\begin{proof}
    Assuming that the intersection is not transverse at some point
$z\in \bL^\vee\cap \bZ$, then there would exist a hyperplane $H\subset \PRs^\vee$
containing both $\bL^\vee$ and the tangent space $T_z\bZ$. Thus, $H$ is a tangent hyperplane to $\bZ$ and contains $\bL^\vee$, so its dual point $H^\vee$ lies in $\bY\cap \bL$,
contradicting $\bY\cap \bL=\varnothing$.
\end{proof}

We observe a duality between $\bY$ and $\bZ$, which provides a geometric connection between the two conditions in Problem \ref{proj problem}. These conditions are naturally encoded in the decomposition lemma.

\begin{exam}[$n=2$ case]\normalfont\label{CS example}

As explained in \cite{lewicka_convex_2017,cao_very_2019}, the problem \eqref{ddagger} in  dimension $2$ is particularly important since it directly relates with the 2-dimensional Monge--Amp\`ere equation. This example illustrates how our method recovers the decomposition established in \cite[Proposition 3.1]{cao_very_2019}, achieving regularity $C^{1,\alpha}$ for $\alpha<\frac{1}{5}$, while further demonstrating that this decomposition is not unique.  \par
    
    For $n=2$, $\bY=\{4y_2y_3-y_1^2=0\}\subset \mathbb{R}\bP^2$, $\bZ=\{[w_1 w_2:w_{1}^2:w_{2}^2]\ \big|\  [w_1:w_2]\in\mathbb{R}\bP^1\}=\{z_{2}z_{3}-z_{1}^2=0\}\subset (\mathbb{R}\bP^2)^\vee$.  
    We aim to determine the maximum dimensional linear subspace $\bL\subset \mathbb{R}\bP^2$  such that $\bL\cap \bY=\varnothing$ and  $|\bL^\vee \cap \bZ|=\dim \bL^\vee+1$.  \par

\begin{figure}[h!]
\centering
\begin{subfigure}[t]{0.49\textwidth}
\centering
\begin{tikzpicture}[scale=0.5]
\def\a{1}
\def\L{1} 

\pgfmathsetmacro{\Va}{8/\a+1} \pgfmathresult

\draw[-,thick] (-4.5,0) -- (4.5,0) coordinate (x axis) node[below] {$\frac{y_2-y_3}{y_2+y_3}$};
\draw[-,thick] (0,-3.5)  -- (0,4) coordinate (y axis) node[left] {$\frac{y_1}{y_2+y_3}$};
\draw (0,0) ellipse (2 and 2); 
\draw (-1.4,-1.2) node[right]{ $\bY$};

\filldraw (2,-2) circle (0.15);
\draw (2,-2.45) node[right]{$\bL_{0}$};

\draw[thick,dashed] (2,3)--(2,-3);
\draw (1.9,2) node[right]{$\rm (pt_2)^\vee$};

\draw[thick,dashed] (-3,-2)--(3,-2);
\draw (-2.5,-2.5) node[right]{$\rm (pt_1)^\vee$};

\draw[thick] (1,2.88)--(-5,-0.58);
\draw (-1.5,1.7) node[left]{$\bL_{1}'$};

\draw[thick] (1,4.38)--(-5,0.92);
\draw (-1.7,3) node[left]{$\bL_{1}$};

\draw (3,3.8) node[right]{in $\RP^2$};
\end{tikzpicture}
  \centering
 \\  $\bL_{1}\cap \bY=\varnothing$ and $\bL_1'$ tangent to $\bY$ as lines,\\
 $\bL_{0}$ is the intersection point of lines $\rm pt_1^\vee$, $\rm pt_2^\vee$. 
\end{subfigure}
\rulesep
\begin{subfigure}[t]{0.45\textwidth}
\centering
\begin{tikzpicture}[scale=0.5]
\def\a{1}
\def\L{1} 

\pgfmathsetmacro{\Va}{8/\a+1} \pgfmathresult

\draw[-,thick] (-4.5,0) -- (5,0) coordinate (x axis) node[below] {$\frac{z_2-z_3}{z_2+z_3}$};
\draw[-,thick] (0,-3)  -- (0,3.5) coordinate (y axis) node[left] {$\frac{z_1}{z_2+z_3}$};
\path (0,-3)--(0,-3.5);

\draw (0,0) ellipse (3 and 1.5); 
\draw (1.5,1.7) node[right]{ $\bZ$};

\draw[thick] (2,2.5)--(-4.5,-0.75);
\draw (-1.7,0.4) node[right]{$\bL_{0}^\vee$};

\filldraw (1.5,-1.3) circle (0.15);
\draw (1.5,-1.5) node[right]{$\bL_{1}'{}^\vee$}; 

\filldraw (0.7,-0.7) circle (0.15);
\draw (0.7,-0.7) node[right]{$\bL_{1}^\vee$};

\filldraw (0,1.5) circle (0.15);
\draw (0,1.75) node[left]{\small $\rm pt_1$};

\filldraw (-3,0) circle (0.15);
\draw (-2.8,0.5) node[left]{\small $\rm pt_2$};

\draw (3,3.8) node[right]{in $(\RP^2)^\vee$};
\end{tikzpicture}
\centering\\
   $\bL_{1}^\vee\notin \bZ$ and $\bL_{1}'{}^\vee\in \bZ$ as points,\\
    $\bL_{0}^\vee$ is the line crossing $\rm pt_1$, $\rm pt_2$.
\end{subfigure}
    \caption{Projective duality of $\bY$ and $\bZ$ when $n=2$.} 
    \label{fig:enter-label}
\end{figure}
    
    In fact, if $\dim \bL=1$, then $\bL^\vee$ is a point, hence $|\bL^\vee \cap \bZ|=\dim \bL^\vee+1$ is equivalent to $\bL^\vee \in Z$. Since $Y^\vee=Z$, by Proposition \ref{principle of duality}, $\bL^\vee\in Z$ if and only if $\bL$ tangent to  $\bY$, which obligates the condition $\bL\cap \bY=\varnothing$.\par
    If $\bL$ is a point in $\mathbb{R}\bP^2$, then $\bL^\vee$ is a line in $(\mathbb{R}\bP^2)^\vee$, thus $\bL^\vee$ intersects  $\bZ$ at two points $\mathrm{pt}_1,\mathrm{pt}_2$ if and only if  $\mathrm{pt}_1^\vee \cap \mathrm{pt}_2^\vee=\bL$. Thus $\dim (\bL)=0$ is the maximum dimension of $\bL$ that staisfies the second condition in Problem \ref{proj problem}. \par

    In Proposition 3.1 of \cite{cao_very_2019}, the authors particularly choose $\bL_{cs}=\{[y_1:y_2:-y_2]\}\subset\mathbb{R}\bP^2$, hence $\dim \bL_{cs}=1$, and $\bL_{cs}\cap \bY=\varnothing$. Consequently, as a point, $\bL_{cs}^\vee=[0:1:1]\notin \bZ$ and corresponds to matrix $Id$.  In \cite{cao_very_2019} the authors moreover choose two points $[0:1:0],\  [0:0:1]\in \bZ$ to generate the point $\bL_{cs}^\vee$.    ~\par

\begin{figure}[h!]\label{Lcs and L*}
\centering
\begin{subfigure}[t]{0.49\textwidth}
\centering
\begin{tikzpicture}[scale=0.5]
\def\a{1}
\def\L{1} 

\pgfmathsetmacro{\Va}{8/\a+1} \pgfmathresult

\draw[-,thick] (-4,0) -- (3.5,0) coordinate (x axis) node[below] {$\frac{y_2-y_3}{y_2+y_3}$};
\draw[-,thick] (0,-3.5)  -- (0,4) coordinate (y axis) node[left] {$\frac{y_1}{y_2+y_3}$};
\draw (0,0) ellipse (2 and 2); 
\draw (-1.4,-1.2) node[right]{ $\bY$};

\draw[blue,thick,dashed] (-5,-3.5)--(-5,4);
\draw[blue] (-5.1,2.5) node[left]{$\bL_{cs}$};
\draw[blue] (-5,1.8) node[left]{$=\bL_{\infty}$};

\draw (2.5,3.8) node[right]{in $\RP^2$};

\end{tikzpicture}
  \centering
 \\ $\bL_{cs}$  is  the line at infinity $[y_1:y_2:-y_2]$.
\end{subfigure}
\rulesep
\begin{subfigure}[t]{0.45\textwidth}
\centering
\begin{tikzpicture}[scale=0.5]
\def\a{1}
\def\L{1} 

\pgfmathsetmacro{\Va}{8/\a+1} \pgfmathresult

\draw[-,thick] (-4.5,0) -- (5,0) coordinate (x axis) node[below] {$\frac{z_2-z_3}{z_2+z_3}$};
\draw[-,thick] (0,-3)  -- (0,3.5) coordinate (y axis) node[left] {$\frac{z_1}{z_2+z_3}$};
\path (0,-3)--(0,-3.5);

\draw (0,0) ellipse (3 and 1.5); 
\draw (1.5,1.7) node[right]{ $\bZ$};

\filldraw[blue] (0,0) circle (0.15);
\draw[blue] (0.25,0.55) node[left]{$\bL_{cs}^\vee$};

\filldraw (3,0) circle (0.15);
\draw (2.8,0.5) node[right]{\small $[0:1:0]$};

\filldraw (-3,0) circle (0.15);
\draw (-2.8,0.5) node[left]{\small $[0:0:1]$};

\draw (3,3.8) node[right]{in $(\RP^2)^\vee$};
\end{tikzpicture}
\centering\\
  $\bL_{cs}^\vee$ is the origin $[0:1:1]$.
\end{subfigure}
    \caption{The choice of  $\bL_{cs}$ in Cao-Sz\'ekelyhidi \cite{cao_very_2019}} 
    \label{fig: CS}
\end{figure}
    
\end{exam}

The preceding example reveals a fundamental limitation for $n=2$ (a phenomenon also occurring for $n \in \{4, 8, 16\}$, as detailed in Section \ref{section: nondeg of Z}): 
\begin{equation*}
\text{any maximal linear subspace $\bL$ disjoint from $\bY$ necessarily satisfies }\bL^\vee\cap\bZ=\varnothing.
\end{equation*}
This geometric obstruction explains why, in \cite{cao_very_2019}, the authors reduced the matrix to the single-parameter form $d^2Id$, yet still required two rank-one symmetric matrices to span it. Their diagonalization proposition, recovered naturally via the elliptic system in our Lemma \ref{elimination lemma}, is stated as follows.

\begin{prop}[Cao-Sz\'ekelyhidi {\cite[Proposition 3.1]{cao_very_2019}}]
For $n=2$, there exist constants $M_1,M_2,\sigma_1>0$ depending only on $j,\alpha$, such that for every $D\in C^{j,\alpha} (\overline{\Omega},\mathbb{R}^{2\times2}_{\mathrm{sym}})$ with $\|D-Id\|_\alpha\leq \sigma_1$, by letting $\phi,\psi\in C^{j+2,\alpha}(\overline{\Om},\mathbb{R})$ satisfying
\begin{equation*}
    \begin{cases}
    2\Delta\phi=\frac{1}{\sqrt{2}}(D_{11}-D_{22}),\quad \Delta\psi=D_{12} & \text{in }\Omega\\
        \phi=\psi=0& \text{on } \partial \Omega
    \end{cases} 
\end{equation*}
and letting
\begin{equation*}
\Phi_{cs}:=-2\,[\sqrt{2}\partial_1\phi+\partial_2\psi,\,-\sqrt{2}\partial_2\phi+\partial_1\psi]^T\in C^{j+1,\alpha}(\overline{\Omega},\mathbb{R}^2),     
\end{equation*}
one has 
\begin{equation}
    D+\sym \nabla \Phi_{cs}=\begin{bmatrix}
        D_{11}-2\sqrt{2}\partial_1^2\phi-2\partial_1\partial_2\psi&0\\
        0&D_{22}+2\sqrt{2}\partial_2^2\phi-2\partial_2\partial_2\psi
    \end{bmatrix}=d^2 Id,
\end{equation}
for some $d\in C^{j,\alpha}(\overline{\Omega},\mathbb{R})$,
with the following estimates:
\begin{gather}
    \|d-1\|_\alpha+\|\nabla\Phi_{cs}\|_\alpha\leq M_1\|D-Id\|_\alpha,\\
[d]_{j,\alpha}+[\nabla \Phi_{cs}]_{j,\alpha}\leq M_2 \|D-Id\|_{j,\alpha}.
\end{gather}
    
\end{prop}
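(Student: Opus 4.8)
The plan is to derive this proposition as the concrete $n=2$ instance of the elimination lemma (Lemma \ref{elimination lemma}) together with the estimate in Proposition \ref{prop between two lemmas}, specialized to the particular subspace $\bL_{cs}$ discussed in the preceding example. First I would take $L\subset\mathbb{R}^{2\times2}_{sym}$ to be the two-dimensional subspace whose vector form is spanned by $\eta_1=\tfrac{1}{\sqrt2}(0,1,-1)$ (corresponding to the traceless-diagonal direction $\mathrm{diag}(1,-1)$ normalized) and $\eta_2=(1,0,0)$ (corresponding to the off-diagonal matrix with $a_{12}=1$); this is exactly $L=\{[\,y_1:y_2:-y_2\,]\}^{\widehat{\ }}$ from Example \ref{CS example}. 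One checks directly that a nonzero element of $L$, after scaling the diagonal by $2$, is $\begin{bmatrix}2s&t\\t&-2s\end{bmatrix}$ up to scalar, with determinant $-4s^2-t^2<0$ whenever $(s,t)\neq 0$, so $L$ satisfies the hypothesis of Lemma \ref{elimination lemma}. Since $\dim L=2$, its dual $L^\vee\subset\mathbb{R}^{2\times2}_{sym}$ is one-dimensional, spanned by $Id$; hence $\Pi_L(\hat D)=0$ forces $\hat D = d^2 Id$ for a scalar function $d$ (the square being available precisely because, after shrinking via $\|D-Id\|_\alpha\le\sigma_1$, the $Id$-component of $\hat D$ stays close to $1$, in particular positive).

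Next I would make the elliptic system \eqref{elliptic system for u_i} explicit for this $L$. With $\boldsymbol\eta=[\eta_1,\eta_2]$ one computes $\boldsymbol\eta^T B B^T\boldsymbol\eta$; because $B$ here is the $3\times2$ matrix of first-order operators from the proof of Lemma \ref{elimination lemma} with $n=2$, and the two rows selected by $\eta_1,\eta_2$ decouple, the system reduces to two scalar Poisson equations. A direct calculation identifies the two unknowns (up to normalization) with the $\phi,\psi$ in the statement: the $\eta_1$-equation becomes $2\Delta\phi=\tfrac{1}{\sqrt2}(D_{11}-D_{22})$ and the $\eta_2$-equation becomes $\Delta\psi=D_{12}$, both with zero Dirichlet data, which are uniquely solvable in $C^{j+2,\alpha}(\overline\Omega)$ by Theorem \ref{elliptic theorem} (scalar case). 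Then formula \eqref{definition of Phi}, $\check\Phi=-2B^T\boldsymbol\eta\,\boldsymbol{\rm u}$, written out in coordinates, is exactly $\Phi_{cs}=-2[\sqrt2\partial_1\phi+\partial_2\psi,\,-\sqrt2\partial_2\phi+\partial_1\psi]^T$, and substituting back gives $D+\sym\nabla\Phi_{cs}=\hat D$ with $\Pi_L(\hat D)=0$, i.e. the displayed diagonal matrix, whose two diagonal entries must coincide (both equal $d^2$) since $\hat D\in L^\vee=\mathbb{R}\cdot Id$.

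Finally, the estimates follow from Proposition \ref{prop between two lemmas} applied with $D$ replaced by $D-Id$: indeed $\sym\nabla\Phi_{cs}$ is unchanged if we subtract the constant matrix $Id$ from $D$ (a constant matrix is $\sym\nabla$ of a linear field, but more simply $\Phi_{cs}$ is linear in the right-hand sides $D_{11}-D_{22}$ and $D_{12}$, which only see $D-Id$), so $\|\nabla\Phi_{cs}\|_\alpha = \|\nabla\check\Phi\|_\alpha\le M_0\|D-Id\|_\alpha$, and similarly $[\nabla\Phi_{cs}]_{j,\alpha}\le M_0'\|D-Id\|_{j+\alpha}$ from the $C^{2,\gamma}$ Schauder estimate in Theorem \ref{elliptic theorem} applied at each differentiation order. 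The bound on $\|d-1\|_\alpha$ and $[d]_{j,\alpha}$ then follows from $d^2 Id = (D-Id)+Id+\sym\nabla\Phi_{cs}$ together with the elementary fact that $x\mapsto\sqrt x$ is Lipschitz (indeed $C^\infty$) near $x=1$, using $\sigma_1$ small enough to keep $d^2$ in a neighborhood of $1$ where this holds, and the product/composition inequalities from the Mollification Lemma \ref{Mollification Lemma}.

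The main obstacle I anticipate is purely bookkeeping rather than conceptual: getting the constants $\sqrt2$ in the right places when passing between the normalized basis vectors $\eta_1,\eta_2$ of $L$ (unit vectors in the inner product on $\mathbb{R}^3$) and the matrix entries $D_{ij}$, and verifying that the coordinate expansion of $-2B^T\boldsymbol\eta\,\boldsymbol{\rm u}$ really does produce the stated $\Phi_{cs}$ with its signs. There is also a minor point to be careful about: the general Lemma \ref{elimination lemma} produces $\hat D$ with $\Pi_L(\hat D)=0$ but does not by itself say the two surviving diagonal entries are equal and nonnegative; this uses $\dim L^\vee=1$ and the near-$Id$ assumption, and should be stated explicitly so that the square root $d$ is well-defined and lies in $C^{j,\alpha}$.
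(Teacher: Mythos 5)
Your proposal is correct and follows exactly the route the paper intends: the paper presents this proposition as the $n=2$ instance of its elimination lemma (Lemma \ref{elimination lemma}) with $L=L_{cs}$ from Example \ref{CS example}, and your explicit computation of $\boldsymbol{\eta}^TBB^T\boldsymbol{\eta}$ (which decouples into $2\Delta\phi$ and $\Delta\psi$), of $\check\Phi=-2B^T\boldsymbol{\eta}\,\boldsymbol{\rm u}=\Phi_{cs}$, and of the positivity of the $Id$-component via $\sigma_1$ small all check out. The only caveats you flag (equality and nonnegativity of the diagonal entries via $\dim L^\vee=1$ and the near-$Id$ hypothesis) are exactly the points that need to be made explicit, and you handle them correctly.
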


\subsection{Radon--Hurwitz number: symmetric matrices case}\label{R-H symmetric cases}

As discussed in the Preliminaries, the Radon--Hurwitz number $\rho(n)$ provides the sharp upper bound for the maximal dimension of a vector space $W$ of real $n \times n$ matrices in which every nonzero element is invertible, as established by Adams in \cite{Adams_1962_vectorfields} via topological $K$-theoretic methods.\par

In this paper, we focus on the symmetric case. The condition \ref{condition 1 in affine problem} in Problem \ref{affine problem} is equivalent to finding invertible subspaces of symmetric matrices in the sense of Section \ref{RH number: a review}. This equivalence is formalized  in the following elementary proposition, whose proof is omitted.\par

\begin{prop}\label{prop-diagonal doubled}
  For any $L\subset \mathbb{R}_{\mathrm{sym}}^{n \times n}$,  let $L_2$ be the linear subspace of $\mathbb{R}_{\mathrm{sym}}^{n\times n}$ given by $L_2:=\{A+\diag(A)\ \big|\   A\in L\}$, namely $L_2$ is the space of matrices from $L$ with their diagonal doubled. Then $\dim L_2=\dim L$, and $L_2$ is invertible  if and only if $\bP(L)\cap \bY=\varnothing$.
\end{prop}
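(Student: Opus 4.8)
The plan is to recognise the operation $A \mapsto A + \diag(A)$ as a linear automorphism of $\Rsn$ and then read off both assertions from it. First I would introduce the linear map $\tau\colon \Rsn \to \Rsn$, $\tau(A) := A + \diag(A)$, so that by definition $L_2 = \tau(L)$. In the coordinate vector form $[A] = [a_{12},\dots,a_{n-1\,n},a_{11},\dots,a_{nn}]$ of the excerpt, $\tau$ acts diagonally: it fixes each off-diagonal coordinate $a_{ij}$ ($i<j$) and multiplies each diagonal coordinate $a_{ii}$ by $2$. Hence $\tau$ is represented by the invertible diagonal matrix $\diag(1,\dots,1,2,\dots,2)$ (with $\binom{n}{2}$ ones and $n$ twos), so $\tau$ is a linear isomorphism. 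The first claim is then immediate: $\dim(L_2) = \dim(\tau(L)) = \dim(L)$.

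For the equivalence I would unwind the two conditions and observe that they coincide. Since $\tau$ is a bijection, the nonzero elements of $L_2$ are exactly the $\tau(A)$ with $0 \ne A \in L$; hence $L_2$ being invertible, in the sense of Section \ref{RH number: a review} that every nonzero element is an invertible matrix, means precisely that $\det(\tau(A)) = \det\big((a_{ij}) + \diag(a_{ii})\big) \ne 0$ for every nonzero $A \in L$. On the other hand, $A \mapsto \det(\tau(A))$ is homogeneous of degree $n$ in the entries of $A$, so its vanishing locus is a cone in $\Rsn$, whose projectivisation meets $\bP(L)$ in exactly $\bP(L) \cap \bY$, with $\bY$ the hypersurface \eqref{defi of Y}. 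Thus $\bL \cap \bY = \varnothing$ asserts exactly that no nonzero $A \in L$ satisfies $\det(\tau(A)) = 0$, which is the same statement; this gives the equivalence.

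There is essentially no obstacle here, since the statement is bookkeeping. The one place where a slip is possible is keeping the index conventions consistent, so that ``scaling the diagonal by $2$'' as used in the elimination lemma \ref{elimination lemma}, the map $\tau$ above, and the matrix $(y_{ij}) + \diag(y_{ii})$ in \eqref{defi of Y} all denote literally the same operation on $\Rsn$. Once that is pinned down, both halves of the proposition follow at once from $\tau$ being a linear isomorphism.
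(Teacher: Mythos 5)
Your proof is correct, and the paper in fact omits the proof of this proposition, labelling it trivial. Recognising $A\mapsto A+\diag(A)$ as the invertible diagonal rescaling of coordinates in the vector form $[A]$ and then matching the vanishing of $\det\bigl((a_{ij})+\diag(a_{ii})\bigr)$ with the defining equation of $\bY$ in \eqref{defi of Y} is exactly the intended bookkeeping argument.
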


In \cite{ALP_1965,ALP_1966_correction}, Adams, Lax, and Phillips determined the maximum dimension of invertible spaces of symmetric real matrices using the 8-fold periodicity of the Radon--Hurwitz number and an elementary construction as in Proposition \ref{prop-construction of W_n}, while the upper bound relies on Adams's theorem \ref{adams theorem} from \cite{Adams_1962_vectorfields}.

\begin{theo}[Adams-Lax-Phillips \cite{ALP_1965}] \label{Thm ALP}
The maximum dimension of an invertible space of $n \times n$ symmetric real matrices $W_{n,\mathrm{sym}}\subset\mathbb{R}_{\mathrm{sym}}^{n \times n}$ is $\rho(\frac{1}{2}n)+1$, where $\rho(n)$ is the Radon--Hurwitz number, and $\rho(\frac{1}{2}n)$ is set to be $0$ if $\frac{1}{2}n$ is not an integer.
    
\end{theo}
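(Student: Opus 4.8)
The plan is to prove the two inequalities separately: $\dim W_{n,sym}\ge\rho(\tfrac12 n)+1$ by exhibiting a space of that dimension, and $\dim W_{n,sym}\le\rho(\tfrac12 n)+1$ by reducing to Adams' Theorem \ref{adams theorem}.

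For the lower bound I would verify that the spaces constructed in Example \ref{construction example} consist of symmetric matrices and have dimension $\rho(\tfrac12 n)+1$. For odd $n$ one takes $W=\mathbb{R}\,Id_n$; for $n=2^{4a+b}(2c+1)$ one takes $W=Id_{2c+1}\otimes W_{2^{4a+b}}$, reducing to powers of two. The base cases $n=1,2,4,8$ use real matrix models of $\mathbb{R},\mathbb{C},\mathbb{H},\mathbb{O}$, and the inductive steps are the symmetry-preserving doubling $A\mapsto\begin{bmatrix}rI&A\\A^T&-rI\end{bmatrix}$ (from size $2^{4a-1}$ to $2^{4a}$, raising the dimension by one) and the Kronecker sums $A\mapsto A\oplus\iota\,Id$ with $\iota$ purely imaginary in $\mathbb{C},\mathbb{H},\mathbb{O}$. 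The only nontrivial check is invertibility of $\begin{bmatrix}rI&A\\A^T&-rI\end{bmatrix}$ for $(r,A)\neq 0$ with $A$ in an invertible space: a Schur complement computation gives $\det=(-1)^{n/2}\det(r^2I+A^TA)\ne 0$ when $r\ne 0$ (as $r^2I+A^TA$ is positive definite) and $\det=\pm(\det A)^2\ne 0$ when $r=0$. Tracking the $8$-fold periodicity $\rho(16m)=\rho(m)+8$ (Definition \ref{R-H number defi}) through the recursion then yields $\dim W_n=\rho(\tfrac12 n)+1$.

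For the upper bound, let $W\subset\Rsn$ be invertible with $\ell:=\dim W\ge 2$ (for $\ell\le 1$ there is nothing to prove). First I would record a signature constraint: for linearly independent $B_0,B_1\in W$ the whole line $\{B_0+tB_1:t\in\mathbb{R}\}$ lies in $W\setminus\{0\}$ and hence consists of invertible symmetric matrices, so the signature $\mathrm{sign}(B_0+tB_1)$ is constant in $t$; letting $t\to+\infty$ and $t\to-\infty$ gives $\mathrm{sign}(B_1)=-\mathrm{sign}(B_1)$, so $\mathrm{sign}(B_1)=0$, and likewise $\mathrm{sign}(B_0)=0$. Thus every nonzero element of $W$ has signature $0$; in particular $n$ is even (so for odd $n$, $\dim W\le 1=\rho(\tfrac12 n)+1$, done), and after a congruence $W\rightsquigarrow S^{-1}WS^{-1}$ we may assume $J_0:=\mathrm{diag}(I_{n/2},-I_{n/2})\in W$. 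Fixing a basis $J_0,C_1,\dots,C_{\ell-1}$ of $W$ and replacing $W$ by $W'':=J_0W$, we obtain a space with basis $I,M_1,\dots,M_{\ell-1}$ (where $M_i=J_0C_i$) consisting of matrices self-adjoint for the split form $x^TJ_0y$ (i.e.\ $M^T=J_0MJ_0$), which is invertible and has the feature that every nonzero combination $\sum t_iM_i$ has no real eigenvalue, since $tI+\sum t_iM_i=J_0\bigl(tJ_0+\sum t_iC_i\bigr)$ is invertible for all real $t$.

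It remains to extract from this $J_0$-self-adjoint structure on $\mathbb{R}^n$ a genuine invertible subspace of $\mathbb{R}^{(n/2)\times(n/2)}$ of dimension $\ell-1$ --- the exact mirror of the doubling construction above --- and then to conclude: by Proposition \ref{radon-hurwitz existence prop} such a subspace produces $\ell-2$ independent vector fields on $S^{n/2-1}$, so Adams' Theorem \ref{adams theorem} gives $\ell-2\le\rho(\tfrac12 n)-1$, i.e.\ $\ell\le\rho(\tfrac12 n)+1$; combined with the construction, the maximum is exactly $\rho(\tfrac12 n)+1$. \emph{The main obstacle is precisely this descent to half the size}: writing a complementary generator as $M=\begin{bmatrix}A&B\\-B^T&D\end{bmatrix}$ with $A,D$ symmetric of size $n/2$, no single congruence can annihilate the diagonal blocks $A,D$ of all generators simultaneously, so one must instead use the structure theory (Weierstrass--Kronecker normal form) of the symmetric pencil $\{tJ_0+C\}$, or the Hurwitz--Radon identities as in \cite{ALP_1965,ALP_1966_correction}, to reduce the size before appealing to Adams' $K$-theoretic bound. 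Everything else --- the construction, the signature computation, the congruence normalization, and the final application of Theorem \ref{adams theorem} --- is routine.
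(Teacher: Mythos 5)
Your lower bound is fine and coincides with the paper's: the block space $\left\{\begin{bmatrix}rI&A\\A^T&-rI\end{bmatrix}\right\}$ with $A$ ranging over a $\rho(\tfrac12 n)$-dimensional invertible space $W_{n/2}$ is symmetric, invertible (your Schur-complement check is correct), and has dimension $\rho(\tfrac12 n)+1$; the odd case is trivial. Your signature argument for the upper bound is also correct as far as it goes: every nonzero element of an invertible symmetric space of dimension $\geq 2$ has signature $0$, so $n$ is even and one may normalize $J_0=\mathrm{diag}(I_{n/2},-I_{n/2})\in W$ and pass to $J_0W\ni I$.

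However, the upper bound as written has a genuine gap, and you have named it yourself: the descent from the $J_0$-self-adjoint space $J_0W\subset\mathbb{R}^{n\times n}$ to an invertible subspace of $\mathbb{R}^{(n/2)\times(n/2)}$ of dimension $\ell-1$ is exactly the nontrivial content of the theorem, and invoking ``Weierstrass--Kronecker normal form of the pencil'' or ``the Hurwitz--Radon identities'' without carrying it out leaves the bound unproved. This is not a routine normalization: as you observe, no congruence kills the diagonal blocks of all generators simultaneously, and extracting a common half-dimensional reduction from a whole family of pencils requires the Clifford-module machinery of \cite{ALP_1965,ALP_1966_correction}. The paper avoids the descent entirely by going \emph{up} rather than down: given $W_{2n,sym}$ of dimension $d$, form the Kronecker sums $A\otimes Id_8+Id_{2n}\otimes\iota Id_8$ with $A\in W_{2n,sym}$ and $\iota$ a purely imaginary octonion. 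Since $A$ is symmetric its eigenvalues are real, those of $\iota Id_8$ are purely imaginary, and the eigenvalues of a Kronecker sum are the pairwise sums, so every nonzero element of this $(d+7)$-dimensional space of $16n\times16n$ (no longer symmetric) matrices is invertible. Adams' bound for general invertible spaces then gives $d+7\leq\rho(16n)=\rho(n)+8$, i.e.\ $d\leq\rho(n)+1$, with no structure theory of symmetric pencils needed. I would recommend either adopting this argument or filling in the half-size reduction in full.
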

Recall that the \textbf{signature} of a real symmetric matrix is the pair $(a,b)$, where $a$ (resp.$b$) is the number of positive (resp. negative) eigenvalues, counted with multiplicities. The Continuity of eigenvalues yields the following.\par

\begin{prop}\label{prop-signature is constant}
Let $W\subset \mathbb R^{n\times n}_{\mathrm{sym}}$ be an invertible linear subspace. Then the signature is constant on each connected component of $W-\{0\}$. In particular, if $\dim W\geq 2$, then all nonzero elements of $W$ have the same signature.
\end{prop}

\begin{proof}
Let $A(t)$, $t\in[0,1]$, be a continuous path in $W-\{0\}$. Since $W$ is invertible, each $A(t)$ is invertible as a matrix. The eigenvalues of a real symmetric matrix depend continuously on the matrix entries. Hence, along the path $A(t)$, no eigenvalue can cross zero. Therefore the number of positive eigenvalues and the number of negative eigenvalues are constant along the path. It follows that the signature is constant on each connected component of $W-\{0\}$. If $\dim W\geq 2$, then $W-\{0\}$ is connected. Hence the signature is constant on all of $W-\{0\}$.
\end{proof}

As an immediate consequence, we recover the odd-dimensional case of Theorem~\ref{Thm ALP}.
\begin{coro}\label{coro-odd dim case}
    Maximum  dimension of an invertible $W_{n,\mathrm{sym}}$ is $1$ when $n$ is odd.
\end{coro}
\begin{proof}
The existence of a one-dimensional invertible subspace is immediate. Assume, for the contradiction, that $W\subset \mathbb R^{n\times n}_{\mathrm{\mathrm{sym}}}$ is
invertible and $\dim W\geq 2$. Then by proposition~\ref{prop-signature is constant}, each element $A\in W_{n,\mathrm{sym}}$ has the same signature, say $(a,n-a)$. However, $-A$ also lies in $W_{n,\mathrm{\mathrm{sym}}}$, and its signature is $(n-a,a)$. Hence $n-a=a$, contradicting the assumption that $n$ is odd.
\end{proof}
Note that the same conclusion extends to the not necessarily symmetric case. Although the signature is not defined for general real matrices, one may instead use the continuity of the determinant along a path in $W-\{0\}$ from $A$ to $-A$ to obtain the conclusion.\par

\begin{coro}\label{coro-even-dim-signature}
    When $n$ is even and $\dim W_{n,\mathrm{sym}}\geq 2$, each matrix in invertible $W_{n,\mathrm{sym}}$ is of signature $(\frac{n}{2},\frac{n}{2})$.
\end{coro}

\begin{proof}
By Proposition~\ref{prop-signature is constant}, all nonzero elements of $W$ have the
same signature, denote as $(a,n-a)$.
Since $-A\in W-\{0\}$, its signature $(n-a,a)$ must be the same with that of $A$. Therefore $a=n-a$, and hence $a=n/2$.
\end{proof}

Using the constructions from Proposition \ref{prop-construction of W_n}, we now present the proof of Theorem \ref{Thm ALP} due to Adams, Lax, and Phillips \cite{ALP_1965}, for the sake of completeness.
\begin{proof}[proof of Theorem \ref{Thm ALP} due to \cite{ALP_1965}]

Let $W_n$ be an invertible space of $n\times n$ real matrices (not necessarily symmetric) of dimension $\rho(n)$, which is maximal due to \cite{Adams_1962_vectorfields}. Let $W_{2n,\mathrm{sym}}\subset \mathbb{R}^{2n\times 2n}_{\mathrm{sym}}$ be an invertible subspace; our goal is to determine the maximum of $\dim W_{2n,\mathrm{sym}}$.\par
    Consider the vector space of $2n \times 2n$ symmetric matrices of the form $\begin{bmatrix}
        rI_{n} &A \\
        A^T &  -rI_n
    \end{bmatrix}$ for each $r\in \mathbb{R}$ and $A\in W_n$. It is straightforward to verify that this space is invertible and of dimension $\rho(n)+1$, hence
   $\dim W_{2n,\mathrm{sym}}\geq \rho(n)+1.$  \par
   On the other hand,  for $\iota\in\operatorname{Im}\mathbb O$, let $\mathsf{L}_\iota:\mathbb O\to\mathbb O$ denote left multiplication by $\iota$, one regards $\mathsf{L}_\iota$ as an $8\times 8$ real skew-symmetric matrix. Given an invertible subspace
$V\subset \mathbb R_{\mathrm{\mathrm{sym}}}^{2n\times 2n}$, define
\begin{equation*}
W'_{16n}=\{A\otimes I_8+I_{2n}\otimes \mathsf{L}_\iota\mid A\in W_{2n,\mathrm{sym}},\ \iota\in \operatorname{Im}\mathbb O\}
\subset \mathbb R^{16n\times 16n}.
\end{equation*}
Then $ \dim  W_{2n,\mathrm{sym}}+7=\dim W'_{16n}\leq\rho(16n)$.    The $8$-fold periodicity in the definition of $\rho(n)$ ensures $\rho(16n)=\rho(n)+8$, thus $\rho(n)+1 \leq \dim W_{2n,\mathrm{sym}}\leq \rho(n)+1$. The odd-dimensional case was proved in Corollary \ref{coro-odd dim case}, and hence the theorem follows for all $n$.
\end{proof}

\subsection{Nondegeneracy of  \textbf{Z} and  intersection of  \textbf{Z} with  \textbf{L}${}^\vee$}\label{section: nondeg of Z}
In this subsection, we use classical algebraic geometry over $\mathbb{R}$ to prove the following theorem, which answers Problem \ref{proj problem}. The necessary ingredients will be developed throughout this subsection, and the proof will be given at the end.
\begin{prop}[Theorem \ref{second theorem}] \label{prop: answer for problem 2}
    \begin{equation*}
    \begin{array}{c}
          \text{The maximum dimension  of $L\subset\mathbb{R}_{\mathrm{sym}}^{n\times n}$} \\
        \text{  that satisfies the two conditions of Problem \ref{affine problem}} 
    \end{array}=\begin{cases}
    \rho(\frac{1}{2}n) &\text{ for } n= 2,4;\\
            \rho(\frac{1}{2}n)+1 \text{ or } \rho(\frac{1}{2}n)& \text{ for } n= 8,16;\\
           \rho(\frac{1}{2}n) +1 & \text{\rm for all other } n\in \mathbb{Z}_{\geq2}.\\
        \end{cases}
    \end{equation*} 
 \end{prop}

In this direction, we  construct a subspace $L\subset\Rsn$ with $\dim L=\frac{n(n+1)}{2}-\Xi_n$, attaining the lower value in Proposition \ref{prop: answer for problem 2}, satisfying $\bP(L)\cap \bY=\varnothing$, and $\bP(L)^\vee\cap \bZ\neq \varnothing$. Recall the iterative construction of $W_n$ as in Proposition \ref{prop-construction of W_n}.

\begin{prop}\label{prop-construction and nonempty intersection}
For the projective linear subspace
$\bL=\bP(L)$, where the invertible subspace $L\subset\mathbb{R}^{n\times n}_{\mathrm{sym}}$ is defined as follows, one has $\bZ\cap \bL^\vee\neq \varnothing$.\par

Moreover, for these choices of $L$, the diagonal-doubling map $A\mapsto A+\diag(A)$ preserves $L$ as a subspace: $ \{A+\diag(A)\mid A\in L\}=L $. Hence, for these $L$, the condition $\bP(L)\cap\bY=\varnothing$ is equivalent to the ordinary invertibility of every nonzero element of $L$.\par

\begin{enumerate}
    \item If $n=2,4,8,16$, then  $L=\left\{\begin{bmatrix}
            0&A\\
            A^T&0
        \end{bmatrix}\mid A\in W_{\frac{n}{2}} \right\}$. In this case, $[e_1\otimes e_1]\in \bZ\cap \bL^\vee$.
     \item  If $n$ is odd, then $L= \left\{\begin{bmatrix}
            \frac{r}{n-1}I_{n-1}&0\\
            0&-r 
        \end{bmatrix}\mid r\in \mathbb{R} \right\}$. In this case, $[(\sum_{i=1}^n e_i)\otimes (\sum_{i=1}^n e_i)]\in \bZ\cap \bL^\vee$.
    \item If $n$ is even, $n\neq 2,4,8,16$, then $L=\left\{\begin{bmatrix}
            rI_{\frac{n}{2}}&A\\
            A^T&-r I_{\frac{n}{2}}
        \end{bmatrix}\mid A\in W_{\frac{n}{2}}, \, r\in \mathbb{R} \right\}$. In this case,
        with the explicit construction of $W_{\frac n2}$, one has
          $[(e_1+e_{24})\otimes (e_1+e_{24})]\in \bZ \cap \bL^\vee$ when $n=32$, while $[(e_1+e_{n})\otimes (e_1+e_{n})]\in \bZ \cap \bL^\vee$ in the remaining even cases.
\end{enumerate}

\end{prop}
\begin{proof}
The statement on double-diagonal is straightforward. For Cases 1 and 2, it is immediate that $L$ is invertible and that the indicated
rank-one points belong to $\bZ\cap\bL^\vee$.\par
For case 3, following the proof of Theorem \ref{Thm ALP}, the $L$ is invertible. When $n=32$, recall the construction of $W_{16}$, the matrices in $L$ have the form 
\begin{equation*}
    \begin{bmatrix}
        r_1I_{16}& \begin{matrix}
            r_2 I_8& \mathsf{L}_{\mathbb{O}}\\
            \mathsf{L}_{\mathbb{O}}^T& -r_2I_8
        \end{matrix}
            \\
     \begin{matrix}
            r_2 I_8& \mathsf{L}_{\mathbb{O}}^T\\
            \mathsf{L}_{\mathbb{O}}& -r_2I_8
        \end{matrix}   & -r_1I_{16}
    \end{bmatrix},\quad         \text{with }r_1,r_2\in \mathbb{R}, 
\end{equation*}
where        $\mathsf{L}_{\mathbb{O}}$ ranges over the real matrices representing  left multiplication of $\mathbb{O}$. 
In particular, the $(1,24)$-entry of every matrix in $L$ vanishes, hence $[(e_1+e_{24})\otimes (e_1+e_{24})]\in \bZ \cap \bL^\vee$ when $n=32$. For the remaining even-dimensional cases, $[(e_1+e_n)\otimes (e_1+e_n)]\in \bZ\cap \bL^\vee$ follows easily from  the inductive construction of $W_n$ in Proposition \ref{prop-construction of W_n}.
\end{proof}

The next proposition shows that the above $L$ can be perturbed to a  $L'$ of the same dimension $\frac{n(n+1)}{2}-\Xi_n$, such that $\bZ\cap \bP(L')^\vee$ is nonempty and nondegenerate in $\bP(L')^\vee$. Recall that a variety is
\textbf{irreducible} if it cannot be written as the union of two proper closed
subvarieties, and that a subset is \textbf{Zariski dense} if every polynomial vanishing on
the subset vanishes on the whole variety. 

\begin{prop}\label{prop-nondegenerancy}
Let $L\subset \Rsn$ be as in Proposition \ref{prop-construction and nonempty intersection}, and assume $n\geq 3$. Then, for
every sufficiently small neighborhood of $L$ in Grassmannian
$ \mathrm{Gr}(\dim L,\Rsn)$,
there exists a linear subspace   
\begin{equation*}
L'\subset \Rsn,
    \qquad
    \dim L'=\dim L,
    \end{equation*}
in this neighborhood such that  $\bY\cap \bP(L')=\varnothing$ and the real intersection $\bZ\cap \bP(L')^\vee$ is nondegenerate in $\bP(L')^\vee$.
\end{prop}

\begin{proof}
Notice that by the construction of $L$ in Proposition \ref{prop-construction and nonempty intersection}, $L$ is invertible and $\bY\cap \bP(L)=\varnothing$.
    Suppose $[\mathbf{x}_0\otimes \mathbf{x}_0]\in \bZ\cap \bP(L)^\vee$, namely $\mathbf{x}_0^T A \mathbf{x}_0=0$ for all $A\in L$.
We first show that the real point persists under small perturbations. Choose a
basis $A_1,\ldots,A_{\dim L}$ of $L$. For $\tilde{L}$ sufficiently close to $L$ in the
Grassmannian, choose a basis $A_1(\tilde{L}),\ldots,A_{\dim L}(\tilde{L})$ of $\tilde{L}$ depending
smoothly on $\tilde{L}$, with $A_i(L)=A_i$. Define
\begin{equation*}
    \mathcal{F}:\mathbb{R}^n-\{0\}\times \mathrm{Gr}(\dim L,\Rsn)\longrightarrow \mathbb{R}^{\dim L},\qquad
    \mathcal{F} (x,\tilde{L})=\left(x^TA_1(\tilde{L})x,\cdots,x^TA_{\dim L}(\tilde{L})x\right).
\end{equation*}
Then $\mathcal F(\mathbf{x}_0,L)=0$, and its differential with respect to the variable $x$ at point $(\mathbf{x}_0,L)$ is
\begin{equation*}
    D_\mathbf{x}\mathcal{F}_{(\mathbf{x}_0,L)}:\mathbb{R}^n\longrightarrow \mathbb{R}^{\dim L},\qquad
    D_\mathbf{x}\mathcal{F}_{(\mathbf{x}_0,L)}(\xi)=\left(2\xi^TA_1\mathbf{x}_0,\ldots,2\xi^TA_{\dim L}\mathbf{x}_0\right).
\end{equation*} If $D_\mathbf{x}\mathcal{F}_{(\mathbf{x}_0,L)}$ were not surjective, then there would exist
$0\neq A\in L$ such that $\xi^TA\mathbf{x}_0=0$ for every $\xi\in\mathbb R^n$, hence
$A\mathbf{x}_0=0$, contradicting the invertibility of $A$. Thus $D_\mathbf{x}\mathcal{F}_{(\mathbf{x}_0,L)}$ is surjective. By the implicit function
theorem, for every $\tilde{L}$ sufficiently close to $L$ there exists
$\mathbf{x}(\tilde{L})$ close to $\mathbf{x}_0$ such that
\begin{equation}\label{equ-persistence of real point}    
  \mathcal{F}(\mathbf{x}(\tilde{L}),\tilde{L})=0\Longleftrightarrow  \forall A\in \tilde{L},\, \mathbf{x}(\tilde{L})^TA\mathbf{x}(\tilde{L})=0
     \Longleftrightarrow [\mathbf{x}(\tilde{L})\otimes \mathbf{x}(\tilde{L})]\in \bZ\cap\bP(\tilde{L})^\vee,
\end{equation}
Thus this real point $[\mathbf{x}(\tilde{L})\otimes \mathbf{x}(\tilde{L})]$ lies near $[\mathbf{x}_0\otimes \mathbf{x}_0]$. After shrinking the neighborhood, the corresponding differential remains
surjective at $\mathbf{x}(\tilde L)$; hence $[\mathbf{x}(\tilde L)]$ is a smooth real point
of $X_{\tilde L_{\mathbb C}}$.

Next, observe that $\bP(\widetilde L)\cap\bY=\varnothing$ is an open condition. Indeed, $\bY$ is closed, after shrinking the
neighborhood of $L$ in the real Grassmannian, every nearby same dimensional
$\tilde L$ still satisfies
    $\bP(\tilde{L})\cap \bY=\varnothing$ .\par
For generic $L'\subset\Rsn$  in a sufficiently small neighborhood of $L$,  the complex quadrics associated with $L'_{\mathbb{C}}:=L'\otimes_\mathbb{R}\mathbb{C}\subset\mathbb{C}^{n\times n}_{\mathrm{sym}}$ cut out a
smooth complete intersection
\begin{equation*}
    X_{L'_{\mathbb{C}}}=\{[\mathbf{x}]\in\mathbb P^{n-1}_{\mathbb{C}}\mid \mathbf{x}^TA\mathbf{x}=0\ \text{for all }A\in L'_{\mathbb{C}}\}\subset \mathbb P^{n-1}_{\mathbb{C}}.
\end{equation*}  
Since $\dim X_{L'_{\mathbb{C}}}=n-1-\dim L'=n-1-\dim L$ which is greater than $0$ when $n\geq 3$, the  $X_{L'_{\mathbb{C}}}$ is irreducible. By the persistence result \eqref{equ-persistence of real point}, $X_{L'_{\mathbb{C}}}$ contains a real smooth point. Since $X_{L'_{\mathbb{C}}}$ is smooth and irreducible for a generic choice of $L'$, its real locus is Zariski dense in $X_{L'_{\mathbb{C}}}$.\par

The following is the standard consequence of complete intersections. Since $X_{L'_{\mathbb{C}}}$ is a complete intersection for a generic choice of $L'$,  every quadratic equation vanishing on
$X_{L'_{\mathbb C}}$ is a linear combination of quadrics $\mathbf{x}^TA\mathbf{x}$ with $A\in L'_{\mathbb{C}}$. Consequently,
if $\mathbf{x}^TB\mathbf{x}$ vanishes on $X_{L'_{\mathbb C}}$, then $B\in L'_{\mathbb C}$. 


\par
It remains to prove the  nondegeneracy of $\bZ\cap \bP(L')^\vee$ in $\bP(L')^\vee$. Suppose, for contradiction, that $\bZ(\mathbb R)\cap\bP(L')^\vee$ is degenerate in $\bP(L')^\vee$. Then there exists an ambient hyperplane $ H_B\subset \bP(\Rsn)^\vee$,
defined by a real symmetric matrix $B\in\Rsn$, namely $(H_B)^\vee=[B]\in\bP(\Rsn)$, satisfying $\bP(L')^\vee\not\subset H_B$ and
    $\bZ(\mathbb R)\cap\bP(L')^\vee\subset H_B$.
Equivalently, 
\begin{equation*}
\text{
$\mathbf{x}^TB\mathbf{x}=0$ for every real point
$[\mathbf{x}\otimes \mathbf{x}]\in \bZ(\mathbb R)\cap\bP(L')^\vee$.}\end{equation*}
 Since this real locus is Zariski dense
in $X_{L'_{\mathbb{C}}}$, the quadric $\mathbf{x}^TB\mathbf{x}$ vanishes on all of
$X_{L'_{\mathbb{C}}}$.
By the consequence of complete intersection above, this implies $B\in L'_{\mathbb{C}}$.
Since $B$ is real and $L'$ is a real subspace, we obtain    $B\in L'$, equivalently in dual space it is $\bP(L')^\vee\subset H_B$, contradicting the choice of $H_B$.
Hence $\bZ \cap \bP(L')^\vee$ is  nondegenerate.
\end{proof}

\bigskip

It is natural to ask what happens in the exceptional dimensions $2,4,8,16$ when
the dimension of the underlying invertible subspace of $\Rsn$ is increased from
$\rho(\frac{1}{2} n)$ to the maximal possible value $\rho(\frac{1}{2} n)+1$.  More
precisely, must the intersection $\bZ\cap\bL^\vee$ then be empty?  These
dimensions are closely related to the real division algebras
$\mathbb R,\mathbb{C},\mathbb H,\mathbb O$.  Together with the phenomenon observed
for $n=2$ in Example~\ref{CS example}, this suggests the following conjecture.
    
    \begin{conj}[Quadrics base locus conjecture]\label{conjecture}
For $n=2,4,8,16$,
  let $W_{n,\mathrm{sym}}$ be a linear subspace of $\mathbb{R}_{\mathrm{sym}}^{n\times n}$ with $\dim W_{n,\mathrm{sym}}=\frac{1}{2}n+1$, such that every nonzero element of $W_{n,\mathrm{sym}}$ is invertible. Take any basis $\{A_1,\cdots, A_{\frac{1}{2}n+1}\}$ of $W_{n,\mathrm{sym}}$.
       Then, 
       \begin{equation*}
           \bx^T A_1\bx=\cdots=
           \bx^T A_{\frac{1}{2}n+1}\bx=0
       \end{equation*}
        has no solution for $\bx \in \mathbb{R}^n-\{0\}$.
    \end{conj}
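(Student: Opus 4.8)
The plan is to prove the Quadrics Base Locus Conjecture by combining a dimension count in real projective space with the projective duality between $\bY$ and $\bZ$ that was established earlier in the excerpt. Suppose, for contradiction, that for $n \in \{2,4,8,16\}$ there is an invertible subspace $W_{n,sym} \subset \Rsn$ of dimension $\frac{1}{2}n+1$ whose associated system of quadrics $\bx^T A_i \bx = 0$ has a common nonzero solution $\bx_0 \in \RR^n$. Let $\bL = \bP(W_{n,sym}) \subset \PRs$, so $\dim \bL = \frac{1}{2}n$ (projective dimension), and let $\bL^\vee \subset \PRs^\vee$ be its dual, of projective dimension $\frac{n(n+1)}{2} - 1 - (\frac{1}{2}n+1) = \frac{n^2-1}{2} - 1$. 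The hypothesis that every nonzero element of $W_{n,sym}$ is invertible (after doubling the diagonal, which by the earlier remarks we may arrange or absorb into the statement) gives exactly condition (1) of Problem \ref{proj problem}, i.e. $\bL \cap \bY = \varnothing$.

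The key observation is that a common solution $\bx_0$ of all the quadrics $\bx^T A_i \bx = 0$ translates, via the Veronese embedding $\iota: \RP^{n-1} \to \PRs^\vee$, into a point $\iota(\bx_0) \in \bZ$ that lies in $\bL^\vee$: indeed $\langle \iota(\bx_0), [A_i] \rangle = \bx_0^T A_i \bx_0 = 0$ for all $i$, so $\iota(\bx_0)$ is orthogonal to all of $W_{n,sym}$, hence lies in $\bL^\vee \cap \bZ$. Thus the existence of a common zero is precisely the statement that $\bL^\vee \cap \bZ \neq \varnothing$. Now invoke Corollary \ref{Z nondegenerate in L}: since $\bL \cap \bY = \varnothing$ and $\bL^\vee \cap \bZ \neq \varnothing$, the intersection $\bL^\vee \cap \bZ$ must be nondegenerate in $\bL^\vee$, meaning it spans $\bL^\vee$ and therefore contains at least $\dim \bL^\vee + 1 = \frac{n^2-1}{2}$ points in general position. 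So a single common zero would force the whole base locus of the quadric system to be a nondegenerate subvariety of $\bL^\vee$.

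The contradiction is then extracted by a parameter/dimension count on $\bZ \cong \RP^{n-1}$, which has dimension $n-1$. The subvariety $\bL^\vee \cap \bZ$ is cut out on $\bZ$ by the $\frac{1}{2}n + 1$ linear conditions defining $\bL^\vee$ (equivalently, by the $\frac{1}{2}n+1$ quadrics on $\RP^{n-1}$), so generically one expects $\dim(\bL^\vee \cap \bZ) \leq (n-1) - (\frac{1}{2}n+1) = \frac{n}{2} - 2$. For a nondegenerate subvariety of $\bL^\vee$ one needs $\dim(\bL^\vee \cap \bZ) \geq \dim(\text{span})$ only in the sense that it must contain enough points; the sharper route is to observe that $\iota^{-1}(\bL^\vee \cap \bZ) \subset \RP^{n-1}$ is the \emph{intersection of $\frac{1}{2}n+1$ quadrics in $\RP^{n-1}$}, and one shows by the real intersection/Bezout-type estimates (or, following Galkin's Euler-characteristic argument referenced in the acknowledgments, by computing $\chi$ of this intersection) that such a locus, \emph{if nonempty}, cannot be contained in a smaller linear space than the one it should nondegenerately span — a numerical incompatibility. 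The cleanest formulation: the hypothesis that $W_{n,sym}$ is invertible of the critical dimension $\frac{1}{2}n+1$ is exactly the dimension at which, by the $n=2,4$ analysis in Example \ref{CS example} and the reflexivity $\bY^\vee = \bZ$, the span of any nonempty $\bL^\vee \cap \bZ$ exceeds the ambient $\bL^\vee$, which is absurd.

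The main obstacle, and the reason this is only conjectural for $n = 8, 16$, is precisely closing this last dimension count rigorously over $\RR$ (not $\CC$): real intersections can be empty when the complex count predicts positive-dimensional loci, so one cannot simply cite complex Bezout, and the transversality needed to apply Corollary \ref{Z nondegenerate in L} cleanly must be verified in each exceptional dimension. For $n = 2, 4$ this is carried out by the explicit computations (Propositions \ref{n=2}, \ref{n=4}) — in the $n=4$ case via Galkin's Euler-characteristic computation — but a uniform argument covering $n = 8$ and $n = 16$ would require either a new structural input (perhaps exploiting the octonionic structure of the maximal invertible space directly) or a finer real-algebraic obstruction, and it is exactly this gap that the conjecture records.
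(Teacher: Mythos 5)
You are attempting to prove a statement that the paper itself leaves as a \emph{conjecture}: the authors only establish the $n=2$ and $n=4$ cases (Propositions \ref{n=2} and \ref{n=4}), and explicitly record $n=8,16$ as open. So no ``paper's own proof'' exists to compare against, and any correct review must begin from that fact. Your proposal correctly reproduces the setup — translating a common zero $\bx_0$ of the quadrics into a point $\iota(\bx_0)\in\bL^\vee\cap\bZ$, invoking $\bL\cap\bY=\varnothing$ and Corollary \ref{Z nondegenerate in L}, and then trying to close by a dimension count — and you are honest at the end that the dimension count does not close for $n=8,16$. That honesty is the most accurate part of the proposal.

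However, the middle of the argument contains a genuine logical error that should be flagged. You claim that Corollary \ref{Z nondegenerate in L} plus ``a parameter/dimension count'' extracts a contradiction, asserting that ``the span of any nonempty $\bL^\vee\cap\bZ$ exceeds the ambient $\bL^\vee$, which is absurd.'' This is false: Corollary \ref{Z nondegenerate in L} only says $\bL^\vee\cap\bZ$ is nondegenerate \emph{in} $\bL^\vee$, i.e.\ spans $\bL^\vee$ — a positive and perfectly consistent conclusion. Indeed, Example \ref{example: construct Wnsym} shows exactly this happening for all $n\neq 2,4,8,16$; nondegeneracy of the intersection is the engine that makes the decomposition lemma work, not an absurdity. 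Concretely, for $n=8$ the intersection, if nonempty, has expected real dimension $\tfrac{n}{2}-2=2$ inside $\bL^\vee\cong\RP^{30}$, and a $2$-dimensional nondegenerate real variety in $\RP^{30}$ is entirely possible (compare rational normal curves); the count of points needed to span never exceeds what such a variety can supply. There is therefore no numerical obstruction of the type you gesture at, and your sketch does not reduce to the paper's actual $n=2$ argument (perpendicularity/collinearity in $\RR^2$) or $n=4$ argument (Galkin's Euler-characteristic computation on an incidence correspondence), which are the only two cases the paper settles. The correct assessment is that your proposal identifies the framework but does not prove the conjecture, and the $n=8,16$ cases remain as open as the paper states.
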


Recall $\rho(\frac{1}{2}n)+1=\frac{1}{2}n+1$ for $n=2,4,8,16$. Although the conjecture is stated using a basis of $W_{n,\mathrm{\mathrm{sym}}}$, the property is independent of this choice.  \par

 The conjecture is in fact known for the $n=2, 4$ cases as in the following propositions, but we choose to keep those cases there to emphasize the exceptional pattern $2,4,8,16$ and its relation to $8$-fold periodicity.

\begin{prop}\label{n=2}
    Conjecture \ref{conjecture} holds for $n=2$.
\end{prop}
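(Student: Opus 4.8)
\textbf{Proof plan for Proposition \ref{n=2}.}

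The plan is to reduce the statement to a concrete classification of invertible $2$-dimensional subspaces of $\mathbb{R}^{2\times 2}_{sym}$ and then verify the claim by direct computation. First I would recall, from Theorem \ref{Thm ALP} with $n=2$, that the maximum dimension of an invertible space of symmetric $2\times 2$ real matrices is $\rho(1)+1 = 2$; so the hypothesis $\dim(W_{2,sym}) = \frac{1}{2}\cdot 2 + 1 = 2$ means $W_{2,sym}$ is a maximal invertible subspace. Since the conjectural conclusion is manifestly independent of the chosen basis $\{A_1, A_2\}$ (the zero locus $\{\bx : \bx^T A_i \bx = 0,\ i=1,2\}$ depends only on the span, being the common zero locus of the pencil of quadratic forms spanned by $A_1, A_2$), it suffices to treat one basis per such $W_{2,sym}$.

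Next I would exploit the low-dimensional geometry directly: an invertible $2$-dimensional $W_{2,sym} \subset \mathbb{R}^{2\times 2}_{sym} \cong \mathbb{R}^3$ gives a projective line $\bP(W_{2,sym}) \subset \mathbb{R}\bP^2$. In the notation of Example \ref{CS example}, the condition ``every nonzero element invertible'' is, after the diagonal-doubling identification, exactly $\bP(W_{2,sym}) \cap \bY = \varnothing$, where $\bY = \{4y_2 y_3 - y_1^2 = 0\}$ is the conic of (doubled-diagonal) degenerate forms. The conclusion we want is precisely $\bP(W_{2,sym}^\vee) \cap \bZ = \varnothing$, where $\bP(W_{2,sym}^\vee)$ is a \emph{point} in $(\mathbb{R}\bP^2)^\vee$ and $\bZ = \bY^\vee$ is the dual conic. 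By the Reflexivity Theorem and the Principle of Duality (Proposition \ref{principle of duality}), $\bP(W_{2,sym}^\vee) \in \bZ = \bY^\vee$ would mean the line $\bP(W_{2,sym})$ is tangent to $\bY$ — but a line tangent to the conic $\bY$ necessarily meets $\bY$ (at the point of tangency, which is real since $\bY$ has real points and tangency is a real incidence condition here). This contradicts $\bP(W_{2,sym}) \cap \bY = \varnothing$. Hence $\bP(W_{2,sym}^\vee) \notin \bZ$, which is exactly the assertion that the two quadrics $\bx^T A_1 \bx = 0$, $\bx^T A_2 \bx = 0$ have no common nonzero real solution.

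As a cross-check I would also run the brute-force version: write $A_1, A_2$ in coordinates, note that ``$W_{2,sym}$ invertible'' forces the pencil $\{\lambda A_1 + \mu A_2\}$ to contain no singular (rank $\le 1$) matrix, i.e. the binary form $\det(\lambda A_1 + \mu A_2)$ in $(\lambda:\mu)$ has no real root; a generic normalization then lets one take, say, $W_{2,sym} = \mathrm{span}\{\,\mathrm{diag}(1,1),\ \begin{psmallmatrix} 0 & 1 \\ 1 & 0 \end{psmallmatrix}\,\}$ up to change of coordinates (this is the standard normal form of a real pencil of binary quadratics with no real degenerate member), and then $\bx^T A_1 \bx = x_1^2 + x_2^2 = 0$ already forces $\bx = 0$. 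The only subtlety — and the main thing to be careful about — is justifying that every maximal invertible $W_{2,sym}$ can be put in this normal form by an invertible change of coordinates on $\mathbb{R}^2$ (equivalently, that all such pencils lie in a single $\GL_2(\mathbb{R})$-orbit up to the diagonal-doubling identification); this is where I would lean on the projective-duality argument above rather than enumerate orbits, since reflexivity handles it uniformly. I do not expect any genuine obstacle here; the content is entirely in correctly translating the ``no common real zero'' statement through projective duality, which Corollary \ref{Z nondegenerate in L} and Example \ref{CS example} have already set up.
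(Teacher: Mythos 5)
Your argument is correct, but it is not the route the paper takes in its proof of Proposition~\ref{n=2}; rather, it is essentially a rewrite of the reasoning the paper already sketches in Example~\ref{CS example} (the "limitation when $n=2$" discussion). The paper's official proof of Proposition~\ref{n=2} is a much more elementary, self-contained linear-algebra argument: assume $\bx_0\neq 0$ satisfies $\bx_0^T A_1\bx_0 = \bx_0^T A_2\bx_0 = 0$; then for every $\lambda_1,\lambda_2$ the vector $(\lambda_1 A_1 + \lambda_2 A_2)\bx_0$ lies in $\bx_0^\perp\subset\mathbb{R}^2$, which is one-dimensional, so $A_1\bx_0$ and $A_2\bx_0$ are collinear, and thus some nonzero $\lambda_1 A_1 + \lambda_2 A_2$ annihilates $\bx_0$, contradicting invertibility of $W_{2,sym}$. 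That argument is shorter, avoids the projective-duality machinery entirely, and makes no appeal to the real-point delicacy you need (namely that a real line in the real dual conic is tangent to $\bY$ at a genuinely real point — which you justify a bit loosely; the clean reason is that a tangent line to a smooth conic meets it at a unique point over $\mathbb{C}$, and uniqueness forces that point to be Galois-invariant hence real). On the other hand, your duality argument has the virtue of making transparent why the case $n=2$ is forced by the ambient framework of Problem~\ref{proj problem}: $\bP(W^\vee)$ is a single point, so $\bP(W^\vee)\in\bZ=\bY^\vee$ would make $\bP(W)$ tangent to $\bY$ and thus violate $\bP(W)\cap\bY=\varnothing$. Note, though, that neither argument extends to $n=4$: in the paper's version, $\bx_0^\perp$ is then $3$-dimensional and three vectors in it need not be linearly dependent; in yours, $\bP(W^\vee)$ is no longer a point. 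The paper accordingly switches to an Euler-characteristic argument for $n=4$ (Proposition~\ref{n=4}). Finally, two small corrections to your write-up: the translation between $W_{2,sym}$ and the pencil living in $\bY$'s ambient space is via \emph{halving}, not doubling, the diagonal (a harmless linear isomorphism, but worth stating correctly); and the classification-by-normal-form sketch in your cross-check is unnecessary once the duality argument is in place, so it is wise that you lean on the latter.
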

\begin{proof}
    Suppose that $\bx_0\in\mathbb{R}^2$ satisfies $\bx_0^T A_1\bx_0=0$, $\bx_0^T A_2\bx_0=0$ with $\{A_1,A_2\}$ being a basis of $W_{2,\mathrm{sym}}$.   
 Then   the vectors $A_1\bx_0$ and $A_2\bx_0$ are both perpendicular to $\bx_0$ in $\mathbb{R}^2$,  $A_1\bx_0$ and $A_2\bx_0$ are collinear. This implies the existence of $(\lambda_1, \lambda_2)\neq (0,0)$ not all zero but $(\lambda_1A_1+\lambda_2A_2)\bx_0=0$. This contradicts the assumption that $(\lambda_1A_1+\lambda_2A_2)\in W_{2,\mathrm{sym}}$ is invertible.  
\end{proof}

\begin{prop}\label{n=4}
    Conjecture \ref{conjecture} holds for $n=4$.
\end{prop}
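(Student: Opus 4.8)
The plan is to translate the statement, via the dictionary of Section~\ref{algebraic description}, into the assertion that the real set
\[
B:=\bZ\cap\bP(W_{4,sym}^\vee)=\bigl\{[\bx]\in\RP^3:\bx^TA\bx=0\ \text{for all }A\in W_{4,sym}\bigr\}
\]
is empty. Here $\dim_\RR W_{4,sym}=\tfrac12\cdot4+1=3$, so $\bP(W_{4,sym})\cong\RP^2$ is a net of quadric surfaces in $\RP^3$, each member nonsingular because $W_{4,sym}$ is invertible, and $B$ is its common real base locus. I would then introduce the incidence variety
\[
\mathcal I:=\bigl\{([\bx],[t])\in\RP^3\times\bP(W_{4,sym}):\bx^TA_t\bx=0\bigr\},
\]
observe that it is a smooth compact $4$-manifold (at a point of $\mathcal I$ the $\bx$-differential of $\bx^TA_t\bx$ is $2A_t\bx\neq0$, since $A_t$ is invertible and $\bx\neq0$), and compute $\chi(\mathcal I)$ in two different ways.

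First, via the projection $\pi_2\colon\mathcal I\to\bP(W_{4,sym})\cong\RP^2$. The crucial preliminary observation is that \emph{every} nonzero $A\in W_{4,sym}$ has signature $(2,2)$: since $\dim W_{4,sym}\geq2$, for $A\neq0$ choose $B\in W_{4,sym}$ independent of $A$ and follow the path $s\mapsto(\cos\pi s)A+(\sin\pi s)B$, $s\in[0,1]$, which stays in $W_{4,sym}\setminus\{0\}$; along it the signature of a nonsingular form is locally constant, so $\mathrm{sig}(A)=\mathrm{sig}(-A)$, and as these are reverses of each other with $n=4$ the signature is $(2,2)$. Hence each fibre of $\pi_2$ is a nonsingular quadric surface of signature $(2,2)$ in $\RP^3$, diffeomorphic to a $2$-torus; since $\pi_2$ is a proper submersion it is a torus bundle by Ehresmann's theorem, and therefore $\chi(\mathcal I)=\chi(\RP^2)\,\chi(T^2)=0$.

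Second, via $\pi_1\colon\mathcal I\to\RP^3$. For $[\bx]\notin B$ the fibre is the hyperplane $\{[t]:\bx^TA_t\bx=0\}\cong\RP^1$ of $\bP(W_{4,sym})$, and over $\RP^3\setminus B$ these fit together into an $\RP^1$-bundle (the pullback of the universal hyperplane bundle over $\bP(W_{4,sym}^\vee)$ along $[\bx]\mapsto[\,A\mapsto\bx^TA\bx\,]$), so its Euler characteristic with compact supports is $\chi_c(\RP^3\setminus B)\cdot\chi(\RP^1)=0$. Over $B$ the fibre is all of $\bP(W_{4,sym})\cong\RP^2$, hence $\pi_1^{-1}(B)=B\times\RP^2$. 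To finish I would use that $B$ is \emph{finite}: if $B$ contained a smooth arc $s\mapsto[\bx(s)]$, differentiating $\bx(s)^TA\bx(s)\equiv0$ for all $A\in W_{4,sym}$ gives $A\bx(s)\perp\bx'(s)$ for all $A$; but $A\mapsto A\bx(s)$ maps $W_{4,sym}$ \emph{onto} $\bx(s)^\perp$ (it is injective, since $A\bx(s)=0$ would make $A$ singular, and both spaces are $3$-dimensional), forcing $\bx'(s)\in(\bx(s)^\perp)^\perp=\RR\bx(s)$, i.e.\ $[\bx(s)]$ is constant --- a contradiction, and the same argument rules out $\dim B\geq1$. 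So $B$ is finite, $\chi_c(\pi_1^{-1}(B))=|B|$, and additivity of $\chi_c$ over $\mathcal I=\pi_1^{-1}(\RP^3\setminus B)\sqcup\pi_1^{-1}(B)$ yields $0=\chi(\mathcal I)=0+|B|$, whence $B=\varnothing$, which is the claim.

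The step I expect to be the genuine obstacle --- and the reason the complex analogue of this intersection problem is false --- is the finiteness of $B$: one cannot get it from complex intersection theory, because over $\mathbb C$ the base locus of such a net need not be finite (for the standard model $W_{4,sym}^0$ it is a pair of conjugate skew lines), so the ``real'' differentiation argument above is really necessary. Equally indispensable is the signature computation: a priori a nonsingular quadric surface in $\RP^3$ could be a $2$-sphere, and without excluding that the two computations would only give $|B|\in\{0,2\}$ instead of $|B|=0$.
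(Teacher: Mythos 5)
Your proof is correct and follows essentially the same route as the paper's (due to Galkin): the incidence correspondence over the net $\bP(W_{4,sym})\cong\RP^2$, the computation $\chi=\chi(\RP^2)\cdot\chi(T^2)=0$ fibering one way, and the comparison with $\chi(\mathcal B)$ fibering the other way. In fact your write-up is tighter than the paper's at exactly the two points you flag: the paper asserts without argument that each fibre quadric is $\RP^1\times\RP^1$ (your path argument showing every nonzero $A\in W_{4,sym}$ has signature $(2,2)$ is the missing justification, since a priori a nonsingular real quadric surface could be a sphere), and it justifies finiteness of the base locus by a B\'ezout-style count of three quadrics in $\RP^3$, which is not valid over $\RR$ (and, as you note, fails over $\CC$ for the standard model); your differentiation argument using surjectivity of $A\mapsto A\bx$ onto $\bx^\perp$ supplies the correct reason.
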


This $n=4$ case is known in \cite[last line of Table 1]{PLAUMANN_2011712}, which is due to the  research on the 28 bitangents of the quartic curves in $\bP^2$ (both complex and real). The following argument, suggested to us by Sergey Galkin, gives another proof using Euler characteristics.
\begin{proof}
    We consider the incidence correspondence
\begin{equation*}
\mathcal Y:=\left\{([M],[\mathbf x])\in \bP(W_{4,\mathrm{sym}})\times \mathbb R\bP^3
\ \middle|\ 
M=k_1A_1+k_2A_2+k_3A_3,\ \mathbf x^TM\mathbf x=0
\right\}.
\end{equation*}
Here $\bP(W_{4,\mathrm{\mathrm{sym}}})\cong \mathbb R \bP^2$. The first projection
$\mathcal Y\to \bP(W_{4,\mathrm{\mathrm{sym}}})$ has fibers given by smooth quadrics in
$\mathbb R \bP^3$. By Corollary~\ref{coro-even-dim-signature}, every nonzero element of
$W_{4,\mathrm{\mathrm{sym}}}$ has signature $(2,2)$. Hence each fiber is homeomorphic to
$\mathbb R \bP^1\times \mathbb R \bP^1$. Therefore 
\begin{equation*}
\chi(\mathcal Y)=\chi(\mathbb R \bP^2)\cdot\chi(\mathbb R \bP^1\times\mathbb R \bP^1)=1\cdot 0=0.
\end{equation*}

On the other hand, consider the second projection $\mathcal Y\to \mathbb R \bP^3$. For
$[\mathbf x]\in\mathbb R \bP^3$, the equation $\mathbf x^TM\mathbf x=0$ becomes the linear
equation
$\sum_{i=1}^3(\mathbf x^TA_i\mathbf x)k_i=0$
in the homogeneous coordinates $[k_1:k_2:k_3]$ of $\bP(W_{4,\mathrm{\mathrm{sym}}})\cong
\mathbb R \bP^2$. The exceptional locus is the base locus
$
\mathcal B:=\{[\mathbf x]\in\mathbb R \bP^3\mid
\mathbf x^TA_1\mathbf x=\mathbf x^TA_2\mathbf x=\mathbf x^TA_3\mathbf x=0\}.$

Notice that  $\bZ$ intersects $\bL^\vee$ transversally by Proposition \ref{prop-Z intersect L^vee transversally}, thus $\bZ\cap \bL^\vee$ is zero-dimensional, if nonempty. Meanwhile, $\mathcal{B}\subset\RP^3$ is the preimage of $\bZ\cap \bL^\vee$ under Veronese embedding, thus $\dim \mathcal{B}\leq 0$.

 Recall that $\chi(\mathcal{Y})=0$, we then have the following
 \begin{equation*}
     0=\chi(\mathcal{Y})=\chi(\mathcal{B})\cdot\chi(\RP^2)+\chi(\RP^3-\mathcal{B})\cdot \chi(\RP^1)=\chi(\mathcal{B})\cdot 1+0=\chi(\mathcal{B}).
 \end{equation*} 
 Since $\mathcal{B}$ is an algebraic subset of $\mathbb{RP}^3$ with
$\dim\mathcal{B}\leq 0$, it is finite. Hence
$\chi(\mathcal{B})=|\mathcal{B}|$, and therefore $\mathcal{B}=\varnothing$.
\end{proof}

\begin{proof}[proof of Proposition \ref{prop: answer for problem 2}]
Propositions \ref{prop-construction and nonempty intersection} and
\ref{prop-nondegenerancy} give the construction attaining lower bounds  in the statement for all $n\geq 3$. For $n=2$, Example \ref{CS example} gives two rank-one symmetric matrices spanning $\bL^\vee$, while Proposition \ref{n=2} gives the upper bound. \par

For the upper bound, Theorem \ref{Thm ALP} of Adams--Lax--Phillips
\cite{ALP_1965}, together with Proposition \ref{prop-diagonal doubled}, shows
that the dimension is at most $\rho(\frac{1}{2} n)+1$ for every $n$.  Propositions
\ref{n=2} and \ref{n=4} show that, in the cases $n=2,4$, this upper bound must
drop by one.  The remaining possibilities for $n=8,16$ are therefore precisely
$\rho(\frac{1}{2} n)+1$ or $\rho(\frac{1}{2} n)$.\qedhere
\end{proof}

We conclude this section by stating the proof of the decomposition lemma.
\begin{proof}[proof of Main Lemma \ref{Decomposition Lemma}]

Let $L\subset\Rsn$ be the subspace that  achieves the maximum possible dimension for all $n \notin \{8, 16\}$, which has dimension $\frac{n(n+1)}{2}-\Xi_n$.

By Lemma \ref{elimination lemma}, choose $\check{\Phi}\in C^{j+1,\alpha}(\overline{\Omega},\mathbb{R}^n)$ such that $\Pi_L (D+\sym \nabla\check{\Phi})=0$. Let $\hat{D}:=D+\sym\nabla\check{\Phi}$, thus $\hat{D}$ takes values in $L^\perp$, namely $\hat{D}\in C^{j,\alpha}(\overline{\Omega},L^\perp)$. We then apply Lemma \ref{nonnegative coefficient lemma} to $\hat{D}$ with $U=L^\perp$, then it gives $\hat{\Phi}\in C^{j+1,\alpha}(\overline{\Omega},\mathbb{R}^n)$ such that $\hat{D}+\sym\nabla \hat{\Phi}=\sum_{i=1}^{\Xi_n}a_i^2\xi_i\otimes\xi_i$. Hence $D+\sym\nabla(\check{\Phi}+\hat{\Phi})=\sum_{i=1}^{\Xi_n}a_i^2\xi_i\otimes\xi_i$, we obtain the decomposition \eqref{decomp1} by taking $\Phi=\check{\Phi}+\hat{\Phi}$. The required bounds follow immediately from Proposition \ref{prop between two lemmas} and Lemma \ref{nonnegative coefficient lemma}.

To prove the optimality of $\Xi_n$ for $n\neq 8,16$, suppose for contradiction that there exist $m_n < \Xi_n$ unit vectors $\xi_1,\dots,\xi_{m_n}$ such that the decomposition $D+\sym\nabla\Phi=\sum_{i=1}^{m_n} a_i^2\xi_i\otimes \xi_i$ and its accompanying estimates \eqref{equ-main lemma estimate} hold for any $D$. Let $U = \operatorname{span}\{\xi_1\otimes \xi_1,\dots,\xi_{m_n}\otimes \xi_{m_n}\}$ and set $L_{\mathrm{orth}}=U^\perp$. Consequently, $\Pi_{L_{\mathrm{orth}}}(D+\sym\nabla \Phi)=0$.

By assumption, $\dim L_{\mathrm{orth}}=\frac{n(n+1)}{2}-m_n > \frac{n(n+1)}{2}-\Xi_n$. By definition, $L_{\mathrm{orth}}{}^\perp=U$ is spanned by $\xi_i\otimes\xi_i$. Hence $L_{\mathrm{orth}}$ satisfies the condition \ref{condition 2 in affine problem} in Problem \ref{affine problem}. Therefore, since $L_{\mathrm{orth}}$ exceeds
the maximum dimension  in Proposition \ref{prop: answer for problem 2}, 
$L_{\mathrm{orth}}$ cannot also satisfy condition \ref{condition 1 in affine problem} in Problem \ref{affine problem},
namely,  there must exist a non-zero $A\in L_{\mathrm{orth}}\subset\Rsn$ such that $A+\operatorname{diag}(A)$ is not invertible. However, by Proposition \ref{prop-oscillatory construction for D}, this  guarantees the existence of a  $D$ for which any $\Phi$ satisfying $\Pi_{L_{\mathrm{orth}}}(D+\sym\nabla\Phi)=0$, if such a $\Phi$ exists, violates the estimates \eqref{equ-main lemma estimate}. This contradiction proves that $\Xi_n$ is minimal.
\end{proof}

\section{Proof of one stage induction and Theorem \ref{main theorem}}

Now we use convex integration method to construct the solution required in Theorem \ref{main theorem}. 
The proof here is well established, as the application of convex integration method to equation \eqref{ddagger} has matured in recent years. See, for example,
\cite{conti_h-principle_2012,lewicka_convex_2017,cao_very_2019,lewicka2025monge,cao_hirsch_inauen_2025,li_Qiu_very_2024}.

\subsection{ A quick start from \cite{cao_hirsch_inauen_2025}}\label{section:setp 1}

We consider the equation \eqref{ddagger}, namely
\begin{equation*}
    A =\frac{1}{2}\nabla v\otimes \nabla v +\sym \nabla w.
\end{equation*}
Applying the scaling technique in \cite{cao_hirsch_inauen_2025},  for the given $A\in C^2(\overline{\Omega},\Rsn),\,v^\flat\in C^0(\overline{\Omega}),\,w^\flat\in C^0(\overline{\Omega},\mathbb{R}^n)$, we fix a constant
\begin{equation*}
\tau:=|A
|+\lVert v^\flat \rVert_2^2+\|w^\flat\|_2^2+100>1,
\end{equation*}
where we apply extension and mollification  to assume $v^\flat\in C^\infty(\overline{\Omega}),\,w^\flat\in C^\infty(\overline{\Omega},\mathbb{R}^n)$, and let 
\begin{equation}\label{definition V0 W0}
    \overline{A}:=\delta_1\tau^{-1}A, \quad V_0:=\delta_1^{\frac{1}{2}}\tau^{-\frac{1}{2}}v^\flat,\quad  W_0:=\delta_1\tau^{-1}w^\flat.
\end{equation}
Then the $(\overline{A}, V_0, W_0)$ satisfies the initial condition in stage proposition \ref{prop:induction} (see \eqref{eq:deficit_q} for definition of $D_q$), thus a solution $(\underline{v},\underline{w})$ of 
\begin{equation*}
\overline{A}=\frac{1}{2}\nabla \underline{v}\otimes \nabla \underline{v} + \sym \nabla \underline{w}
\end{equation*}
is obtained via induction on stages in Proposition \ref{prop:induction}. Then $(v,w)=(\delta_1^{-\frac{1}{2}}\tau^{\frac{1}{2}}\underline{v},\delta_1^{-1}\tau \underline{w})$ solves \eqref{ddagger}, and we postpone the verification of $\|v-v^\flat\|_0<\epsilon$ to the end of the proof.

\subsection{ Induction on stages}

Set $a>1, \ 1<b<2,$ and $  c>0$ be three real positive numbers to be determined as parameters, and take 
\begin{equation}\label{definition of delta and lambda}
  \begin{array}{lllll}
   \delta_q:=a^{-b^q}<1,& \lambda_q:=a^{cb^q}>1, &C_*>1& K>1,& \EC>C_*^2+\|A\|_1,
\end{array}
  \end{equation}
  where $C_*$, $K$, $\EC$ would be constants.\par
  Recall the definition of $\Xi_n$ in \eqref{defi of Xi}. For later application, we further denote

  \begin{equation}\label{level three definition}
   1<\frac{K\delta^{\frac{1}{2}}_q\lambda_q}{\delta^{\frac{1}{2}}_{q+1}}=:\mu_0\leq\mu_1\leq\cdots\leq \mu_{\Xi_n}:=\lambda_{q+1},\quad
      \mu_i:=\mu_0^{1-\frac{i}{\Xi_n}}\mu^{\frac{i}{\Xi_n}}_{\Xi_n},\quad l:=\frac{1}{C_*\mu_0}=\frac{\delta^{\frac{1}{2}}_{q+1}}{C_* K\delta^{\frac{1}{2}}_q\lambda_q}<1.
  \end{equation}
 All above terms will be determined later according to the requirements in the next proposition.

Denote the $q$-th stage deficit matrix by
\begin{equation}
    \label{eq:deficit_q}
    D_q:=\overline{A}-\frac{1}{2} \nabla V_q\otimes \nabla V_q-\sym\nabla W_q.
\end{equation}

The following proposition provides the crucial ``one-stage" induction as a common notion in subsequent works following Nash's construction \cite{Nash_C1}. We adhere to the clear exposition provided by Li-Qiu \cite{li_Qiu_very_2024} and include the proof here for completeness, as our presentation on this one-stage induction does not introduce any new results. Note, however, that our $q$-th stage deficit $D_q$ does not contain $\delta_{q+1}Id$ term---compare with (3.23) in \cite{li_Qiu_very_2024}---thanks to our main Lemma \ref{Decomposition Lemma}, in particular, the nonnegative coefficients lemma \ref{nonnegative coefficient lemma}.

\begin{prop}[Stage]
    \label{prop:induction}
    There exist $a>1$, $1<b<2$, $c>0$ and   $K>1$, depending only on the fixed data of the scheme, such that, if $(V_q,W_q)\in C^2(\overline{\Omega})\times C^2(\overline{\Omega},\RR^n)$ satisfies 
    \begin{align}
        \|(V_q,W_q)\|_1&\leq\sqrt{K},\label{norm1 assumption} \\
        \|(V_q,W_q)\|_2&\leq K\delta_q^{\frac{1}{2}}\lambda_q,\label{norm 2 assumption}\\
        \|D_q\|_0&\leq\delta_{q+1}\label{norm D assumption},        
    \end{align}
    then we can construct $(V_{q+1},W_{q+1})\in C^2(\overline{\Omega})\times C^2(\overline{\Omega},\RR^n)$ with
    \begin{align}
\|(V_{q+1}-V_q,W_{q+1}-W_q)\|_0&\leq (\delta_{q+1}^{\frac{1}{2}}+1)\frac{\delta_{q+1}^{\frac{1}{2}}}{\delta_{q}^{\frac{1}{2}}\lambda_q},\label{conclusion 0}\\
        \|(V_{q+1}-V_q,W_{q+1}-W_q)\|_1&\leq K\delta_{q+1}^{\frac{1}{2}},\label{conclusion 1}\\
        \|(V_{q+1},W_{q+1})\|_2&\leq K\delta_{q+1}^{\frac{1}{2}}\lambda_{q+1},\label{conclusion 2}\\
        \|D_{q+1}\|_0&\leq\delta_{q+2}, \label{conclusion 3}       
    \end{align}
\end{prop}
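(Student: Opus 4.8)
The plan is to carry out the by-now-standard one-stage construction of convex integration for \eqref{ddagger}, following \cite{conti_h-principle_2012,lewicka_convex_2017,li_Qiu_very_2024} essentially line by line, the only structural change being that the deficit is decomposed into $\Xi_n$ primitive matrices by invoking the Main Lemma \ref{Decomposition Lemma} rather than into Nash's $\tfrac{n(n+1)}2$ of them. The point worth stressing is that the Main Lemma applies to \emph{any} $D\in C^{j,\alpha}(\overline\Omega,\Rsn)$ and already outputs nonnegative squared coefficients $a_i^2$; hence, unlike in \cite{li_Qiu_very_2024}, no auxiliary $\delta_{q+1}\Id$ need be carried inside $D_q$, which is exactly why \eqref{eq:deficit_q} has its clean form and why the plain smallness hypothesis \eqref{norm D assumption} suffices to open a stage.

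First I would mollify. With $l=\tfrac1{C_*\mu_0}$ as in \eqref{level three definition}, set $(v_0,w_0):=(V_q*\phi_l,W_q*\phi_l)$; the Mollification Lemma \ref{Mollification Lemma} and \eqref{norm1 assumption}--\eqref{norm 2 assumption} then give $\|(v_0,w_0)\|_1\le\sqrt K$, $\|(v_0,w_0)\|_{k}\lesssim l^{2-k}K\delta_q^{1/2}\lambda_q$ for $k\ge 2$, and $\|(v_0-V_q,w_0-W_q)\|_1\lesssim C_*^{-1}\delta_{q+1}^{1/2}$, while the commutator estimate together with \eqref{norm D assumption} and $\EC>C_*^2+\|A\|_1$ control the mollified deficit $\widetilde D_0:=\overline A-\tfrac12\nabla v_0\otimes\nabla v_0-\sym\nabla w_0$ by $\|\widetilde D_0\|_0\le(1+\EC^{-1})\delta_{q+1}$, together with the $l^{-k}$-type bounds on its higher seminorms that feed the amplitude estimates. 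Next I would apply the Main Lemma \ref{Decomposition Lemma} to $\widetilde D_0$, obtaining unit vectors $\xi_1,\dots,\xi_{\Xi_n}$, amplitude functions $a_i$, and a field $\Phi$ with
\begin{equation*}
\widetilde D_0+\sym\nabla\Phi=\sum_{i=1}^{\Xi_n}a_i^2\,\xi_i\otimes\xi_i,
\end{equation*}
the Main Lemma's two estimates yielding $\|a_i\|_\alpha\lesssim\delta_{q+1}^{1/2}$, control of $\|a_i\|_{1,\alpha}$ by the $C^{1,\alpha}$-seminorm of $\widetilde D_0$, and $\|\Phi\|_{1,\alpha}\lesssim\delta_{q+1}\le\delta_{q+1}^{1/2}$, $\|\Phi\|_2\lesssim K\delta_{q+1}^{1/2}\lambda_{q+1}$; the field $\Phi$ is then absorbed into $w_0$, incurring no loss in \eqref{conclusion 1}--\eqref{conclusion 2}.

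Then I would run the $\Xi_n$ corrugation steps. For $i=1,\dots,\Xi_n$, writing $t_i:=\mu_i\,(x\cdot\xi_i)$, I would set
\begin{align*}
v_i&:=v_{i-1}+\tfrac1{\mu_i}\,\Gamma_1\!\bigl(a_i,t_i\bigr),\\
w_i&:=w_{i-1}+\tfrac1{\mu_i}\,\Gamma_2\!\bigl(a_i,t_i\bigr)\,\xi_i-\tfrac1{\mu_i}\,\partial_t\Gamma_1\!\bigl(a_i,t_i\bigr)\,\bigl(\nabla v_{i-1}\!\cdot\!\xi_i\bigr)\,\xi_i+(\text{lower order}),
\end{align*}
and put $(V_{q+1},W_{q+1}):=(v_{\Xi_n},w_{\Xi_n})$. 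The identity \eqref{dagger_Gamma} is precisely what makes the $i$-th step add $a_i^2\,\xi_i\otimes\xi_i$ to $\tfrac12\nabla v\otimes\nabla v+\sym\nabla w$ up to controllable errors, so that $D_{q+1}=-\mathcal E$ where $\mathcal E$ decomposes, as always, into: (a) terms where a spatial derivative falls on $a_i$ or on $\nabla v_{i-1}$, bounded via the seminorms of the $a_i$ divided by the frequencies $\mu_i$; (b) interaction terms between corrugations of distinct frequencies and between each corrugation and the mollified low part, handled by the usual stationary-phase / integration-by-parts gains in the ratios $\mu_{i-1}/\mu_i$ and $\delta_q^{1/2}\lambda_q/\mu_i$; and (c) the mollification remainder, already bounded by $\EC^{-1}\delta_{q+1}$. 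Summing the corrugation contributions and using \eqref{Gamma estimats} gives \eqref{conclusion 0}--\eqref{conclusion 2}, since $\|v_i-v_{i-1}\|_0\lesssim\mu_i^{-1}\delta_{q+1}^{1/2}$, $\|v_i-v_{i-1}\|_1\lesssim\delta_{q+1}^{1/2}$, $\|v_i-v_{i-1}\|_2\lesssim\mu_i\delta_{q+1}^{1/2}\le\delta_{q+1}^{1/2}\lambda_{q+1}$, and likewise for $w$.

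The only genuinely delicate point is \eqref{conclusion 3}: one must select $a>1$, $1<b<2$, $c>0$ and the constants $K,C_*,\EC$ so that each term of $\mathcal E$ above is $\le\delta_{q+2}=a^{-b^{q+2}}$ while \eqref{conclusion 1}--\eqref{conclusion 2} hold with the stated universal $K$. Since $\Xi_n$ is a fixed integer, this reduces to a fixed finite system of inequalities of the shape $1<b<2$, $c\ge c_0(b)$, $a\ge a_0(b,c)$ — solved exactly as in \cite[\S3]{li_Qiu_very_2024}; I would simply transcribe that parameter choice, the sole difference being the number $\Xi_n$ of steps in place of $\tfrac{n(n+1)}2$, a change that does not affect solvability here (it enters only later, in optimizing the H\"older exponent in Theorem \ref{main theorem}). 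The main obstacle is therefore purely the error bookkeeping: confirming that each interaction term in $\mathcal E$ really does carry the claimed negative power of a frequency ratio, so that the $\delta_{q+1}$-sized nominal errors end up $\le\delta_{q+2}$ once the parameters are frozen. No idea beyond what the Main Lemma \ref{Decomposition Lemma} already provides is needed.
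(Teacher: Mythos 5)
Your outline follows the paper's route exactly (mollify; apply the Main Lemma \ref{Decomposition Lemma} to the mollified deficit to obtain $\Xi_n$ amplitudes $a_i$ plus the field $\Phi$; run $\Xi_n$ Nash--Kuiper steps; close the parameter inequalities as in \cite{li_Qiu_very_2024}), but the corrugation ansatz you wrote for $w_i$ is wrong at leading order, and the error it leaves behind is not lower order. Expanding $\tfrac12\nabla v_i\otimes\nabla v_i-\tfrac12\nabla v_{i-1}\otimes\nabla v_{i-1}$ with $\nabla\bigl(\tfrac1{\mu_i}\Gamma_1\bigr)=\tfrac1{\mu_i}\partial_s\Gamma_1\nabla a_i+\partial_t\Gamma_1\,\xi_i$ produces the cross term $\partial_t\Gamma_1\,\sym(\xi_i\otimes\nabla v_{i-1})+O(\mu_i^{-1})$, which has $C^0$ size $\sim\delta_{q+1}^{1/2}\sqrt K$ and is \emph{not} a multiple of $\xi_i\otimes\xi_i$. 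The paper cancels it by subtracting the full vector $\tfrac1{\mu_i}\Gamma_1(a_i,\mu_i x\cdot\xi_i)\,\nabla v_{i-1}$ from $w_{i-1}$ (see \eqref{corrugation in each step}), since $\sym\nabla\bigl(\tfrac1{\mu_i}\Gamma_1\nabla v_{i-1}\bigr)=\partial_t\Gamma_1\sym(\xi_i\otimes\nabla v_{i-1})+\tfrac1{\mu_i}\Gamma_1\nabla^2v_{i-1}+O(\mu_i^{-1})$, leaving only the controllable loss-of-derivative term $\tfrac1{\mu_i}\Gamma_1\nabla^2v_{i-1}$. Your replacement $-\tfrac1{\mu_i}\partial_t\Gamma_1(a_i,t_i)\,(\nabla v_{i-1}\cdot\xi_i)\,\xi_i$ has $\sym\nabla$ proportional to $\xi_i\otimes\xi_i$ at leading order, so it cancels nothing off the $\xi_i$-axis; the uncancelled term $\partial_t\Gamma_1\sym(\xi_i\otimes\nabla v_{i-1})$ would persist into $D_{q+1}$ at size $O(\delta_{q+1}^{1/2})\gg\delta_{q+2}$ and \eqref{conclusion 3} would fail. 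This must be fixed explicitly; it cannot be absorbed into ``(lower order)''.

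A secondary inaccuracy: in this Nash--Kuiper-style scheme there are no oscillatory interaction terms requiring stationary phase or integration by parts. Once the cross term is cancelled as above, the entire per-step error is $\mathcal E_i=\tfrac1{\mu_i}\Gamma_1\nabla^2v_{i-1}-\tfrac1{2\mu_i^2}|\partial_s\Gamma_1|^2\nabla a_i\otimes\nabla a_i-\tfrac{\partial_s\Gamma_2+\partial_s\Gamma_1\partial_t\Gamma_1}{\mu_i}\sym(\nabla a_i\otimes\xi_i)$ (the $\xi_i\otimes\xi_i$ parts having cancelled via \eqref{dagger_Gamma}), bounded directly in $C^0$ by $C_*^2\delta_{q+1}\mu_{i-1}/\mu_i$, plus the mollification remainder $\overline A-\overline A*\phi_l$ of size $\|A\|_1 l$; the conditions \eqref{dagger_EC_LEQ} and \eqref{dagger_delta_q+2} then yield \eqref{conclusion 3}. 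With the $w$-ansatz corrected, the rest of your plan (estimates for $v_i-v_{i-1}$, $w_i-w_{i-1}$, absorption of $\Phi$ into the first step, and the parameter selection) matches the paper's proof.
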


\begin{proof}
\noindent\textbf{1. Mollification.}  Consider the mollification $v_0:=V_q*\phi_l$, $w_0:=W_q*\phi_l$, and denote 
    \begin{equation*}
        \tilde{D}:=\overline{A}*\phi_l-\frac{1}{2}\nabla v_0\otimes \nabla v_0-\sym \nabla w_0.
    \end{equation*}
 Notice by property of mollification, $D_q*\phi_l=\overline{A}*\phi_l-\frac{1}{2}(\nabla V_q\otimes\nabla V_q)*\phi_l-\sym\nabla w_0$, then
    \begin{equation*}
        \tilde{D}=D_q*\phi_l+\frac{1}{2}\left((\nabla V_q\otimes\nabla V_q)*\phi_l-\nabla v_0\otimes\nabla v_0\right).
    \end{equation*}

With the definition of $\mu_0$ in \eqref{level three definition}, the estimates on mollification in Lemma \ref{Mollification Lemma} and the initial assumptions \eqref{norm1 assumption} \eqref{norm 2 assumption} yield 
\begin{equation}\label{v0-vq:0 norm}
    \|v_0-V_q,w_0-W_q\|_0\lesssim\|(V_q,W_q)\|_1 l\lesssim \sqrt{K} \cdot l,
\end{equation}
\begin{equation}\label{v0-Vq}
    \lVert(v_0-V_q,w_0-W_q)\rVert_1\lesssim \lVert(V_q,W_q)\rVert_2 l\lesssim K\delta_q^{\frac{1}{2}}\lambda_ql\lesssim \delta_{q+1}^{\frac{1}{2}}\mu_0l\lesssim \delta_{q+1}^{\frac{1}{2}},
\end{equation}
\begin{equation}\label{v0 w0 norm 2+k}
    \lVert (v_0,w_0)\rVert_{2+k}\lesssim\lVert (V_q,W_q)\rVert_2l^{-k}\lesssim\delta_{q+1}^{\frac{1}{2}}\mu_0l^{-k}, \quad k=0,1.
\end{equation}
By using the estimates of mollification in Lemma \ref{Mollification Lemma}, the assumptions  \eqref{norm1 assumption} \eqref{norm D assumption}, and the definition $\mu_0=\frac{K\delta_q^{\frac{1}{2}}\lambda_q}{\delta_{q+1}^{\frac{1}{2}}}$ in \eqref{level three definition}, one has
\begin{align*}
    \rVert\tilde{D}\rVert_0&\leq \rVert D_q*\phi_l\rVert_0+\frac{1}{2}\rVert(\nabla V_q\otimes\nabla V_q)*\phi_l-\nabla v_0\otimes\nabla v_0\lVert_0\\
    &\leq \lVert D_q\rVert_0+Cl^2\lVert V_q\rVert_2^2\\
    &\leq \delta_{q+1}+C(\mu_0l)^2\delta_{q+1}.
\end{align*}
Recall the definition $l=\frac{1}{C_* \mu_0}$ in \eqref{level three definition}, then for some $C_*>1$, we have 
\begin{equation*}
    \lVert\tilde{D}\lVert_{0}\leq  2  \delta_{q+1}, \quad \text{namely} \quad
    \lVert\frac{\tilde{D}}{\delta_{q+1}}\lVert_{0}\leq  2 .
\end{equation*}
Similar estimates for higher derivatives give
\begin{equation*}
    \lVert \tilde{D}\lVert_{k}\leq C\delta_{q+1}l^{-k}, \quad \text{ for }1\leq k\leq 3.
\end{equation*}

\noindent\textbf{2. Decomposition.} Applying Lemma \ref{Decomposition Lemma} to $\frac{\tilde{D}}{\delta_{q+1}}$ gives us  $\tilde{\Phi}$ and $\tilde{a}_i$ such that
$\frac{\tilde{D}}{\delta_{q+1}}=-\sym(\nabla \frac{\tilde{\Phi}}{\delta_{q+1}})+\sum_{i=1}^{\Xi_n}\frac{\tilde{a}_i^2}{\delta_{q+1}}\xi_i\otimes\xi_i,$ namely
\begin{equation}\label{D tilde decomposition} 
\overline{A}*\phi_l-\frac{1}{2}\nabla v_0\otimes\nabla v_0-\sym\nabla  w_0=-\sym(\nabla \tilde{\Phi})+\sum_{i=1}^{\Xi_n}\tilde{a}_i^2\xi_i\otimes\xi_i. 
\end{equation}
Next, we apply a similar technique as in Lemma \ref{nonnegative coefficient lemma}, adding a positive constant to $\tilde{a}_i^2$ to obtain working estimates of $\|\nabla^ka_i\|_0$. Since $\|\tilde{D}\|_0\leq2 \delta_{q+1},\,\,\|\tilde{D}\|_k\leq C\delta_{q+1} l^{-k}$, we let $a_i=\sqrt{\tilde{a}_i^2+\delta_{q+1}}$, and $\Phi=\tilde{\Phi}+\delta_{q+1}(\sum_{i=1}^{\Xi_n}\xi_i\otimes \xi_i)x$ for $x\in \overline{\Omega}$, then $-\sym(\nabla \tilde{\Phi})+\sum\tilde{a}_i^2\xi_i\otimes\xi_i=-\sym(\nabla\Phi)+\sum a_i^2 \xi_i\otimes\xi_i $.

Since the tensors $\xi_i\otimes\xi_i$ are fixed and linearly independent, the estimates in Lemma \ref{Decomposition Lemma} imply
\begin{equation*}
\|\tilde a_i^2\|_0\lesssim \delta_{q+1},
\qquad
\|\tilde a_i^2\|_k\lesssim \delta_{q+1}l^{-k},
\quad 1\leq k\leq 3.
\end{equation*}
The definition of $a_i$, $\Phi$ and chain rule gives
\begin{equation}\label{a_i estimates 0}
\|a_i\|_0
\lesssim
\delta_{q+1}^{1/2},
\end{equation}
\begin{equation}\label{a_i estimates 1 to 3}
\|\nabla^k a_i\|_0
\lesssim
\delta_{q+1}^{1/2}l^{-k},
\qquad 1\leq k\leq 3,
\end{equation}
\begin{equation}\label{Phi estimates 1 to 2}
\|\Phi\|_k
\lesssim
\|\tilde\Phi\|_k+\delta_{q+1}
\lesssim
\delta_{q+1}\left\|\frac{\tilde D}{\delta_{q+1}}\right\|_{k-1}+\delta_{q+1}
\lesssim
\delta_{q+1}l^{1-k},
\qquad 1\leq k\leq 2 .
\end{equation}

\noindent\textbf{3. Construction of $i$th step.} Now we start the induction over the steps  $1\leq i\leq \Xi_n$ in one-stage,  with $\Xi_n<\frac{n(n+1)}{2} $ as defined in Definition \ref{defi of Xi}.
\begin{equation}\label{corrugation in each step}
\begin{aligned}
    &v_i:=v_{i-1}+\frac{1}{\mu_i}\Gamma_1(a_i, \mu_ix\cdot \xi_i),\\
    & 
w_i:=\begin{dcases}
        w_{0}-\Phi-\frac{1}{\mu_1}\Gamma_1(a_1,\mu_1x\cdot \xi_1)\nabla v_{0}+\frac{1}{\mu_1}\Gamma_2(a_1,\mu_1x\cdot \xi_1)\xi_1 & \text{for } i=1,\\
        w_{i-1}-\frac{1}{\mu_i}\Gamma_1(a_i,\mu_ix\cdot \xi_i)\nabla v_{i-1}+\frac{1}{\mu_i}\Gamma_2(a_i,\mu_ix\cdot \xi_i)\xi_i & \text{for } 2\leq i\leq\Xi_n,
    \end{dcases}\\
    & (V_{q+1},W_{q+1}):=(v_{\Xi_n},w_{\Xi_n}).
\end{aligned}
\end{equation}

The estimates for $v_i$
  are obtained as follows. For $0\leq k\leq 3$, by the estimates on $\Gamma_1$ in \eqref{Gamma estimates} and on $a_i$ in \eqref{a_i estimates 0}\eqref{a_i estimates 1 to 3}, we have
\begin{equation*}
    \begin{aligned}
        \lVert v_i-v_{i-1}\rVert_k&\lesssim \frac{1}{\mu_i}(\mu_i^k\lVert a_i\rVert_0+\mu_i^{k-1}\lVert a_i\rVert_1+\cdots+\lVert a_i\rVert_k )\\
        &\lesssim\frac{1}{\mu_i}(\mu_i^k\delta_{q+1}^{\frac{1}{2}}+\mu_i^{k-1}\delta_{q+1}^{\frac{1}{2}}l^{-1}+\cdots+\delta^{\frac{1}{2}}_{q+1}l^{-k}
        ).
    \end{aligned}
\end{equation*}
If we require
\begin{equation}\label{dagger_C_*_LEQ}
    l^{-1}= C_*\mu_0\leq \mu_1 \tag{$\dagger_{C_*\leq}$},
\end{equation}
 then
\begin{equation}\label{vi-vi-1}
    \lVert v_i-v_{i-1}\rVert_k\lesssim \delta_{q+1}^{\frac{1}{2}}\mu_i^{k-1}.
\end{equation}
Consequently, together with $\lVert v_0\rVert_{3}\lesssim \delta_{q+1}^{\frac{1}{2}}\mu_0l^{-1}$ as in \eqref{v0 w0 norm 2+k}, we have
\begin{equation}\label{estimates of v_i_k}
\begin{gathered}
    \lVert\nabla v_i\rVert_0\lesssim \lVert \nabla V_q\rVert_0+\delta_{q+1}^{\frac{1}{2}}\lesssim \sqrt{K},\\
    \lVert\nabla^2v_i\rVert_0\leq \|\nabla^2v_0\|_0+\sum_{j=1}^i\|\nabla^2(v_j-v_{j-1})\|_0
  \lesssim \delta_{q+1}^{\frac{1}{2}}\mu_0+\delta_{q+1}^{\frac{1}{2}}(\mu_1+\cdots+\mu_i)\lesssim \delta_{q+1}^{\frac{1}{2}}\mu_i,\\
  \lVert\nabla^3v_i\rVert_0\leq \|\nabla^3v_0\|_0+\sum_{j=1}^i\|\nabla^3(v_j-v_{j-1})\|_0
  \lesssim \delta_{q+1}^{\frac{1}{2}}\mu_0l^{-1}+\delta_{q+1}^{\frac{1}{2}}(\mu_1^2+\cdots+\mu_i^2)\lesssim \delta_{q+1}^{\frac{1}{2}}\mu_i^2.
\end{gathered}
\end{equation}

Next, we establish the estimates for $w_i$. For $0\leq k\leq 2$, by requirement $l^{-1}\leq\mu_1$ , estimates on $\Gamma_1$ and $\Gamma_2$ in \eqref{Gamma estimates} and estimates on  $a_i$, $\nabla v_{i-1}$ in \eqref{a_i estimates 0} \eqref{a_i estimates 1 to 3} \eqref{estimates of v_i_k}, and $\Phi$ in \eqref{Phi estimates 1 to 2}, we have
\begin{equation}   \label{wi-wi-1} 
\begin{aligned}
\|w_i-w_{i-1}\|_k\lesssim &\frac{1}{\mu_i}\sum_{j_1,j_2,j_3\geq0}^{j_1+j_2+j_3=k}\|a_i\|_{j_1}\|\nabla v_{i-1}\|_{j_2}\mu_i^{j_3}+\frac{1}{\mu_i}(\|a_i^2\|_0\mu_i^k+\cdots\|a_i^2\|_k)+\|\Phi\|_k\\
\lesssim&\frac{1}{\mu_i}(\sqrt{K}\sum_{j_1,j_3\geq0}^{j_1+j_3=k}\delta_{q+1}^{\frac{1}{2}}l^{-j_1}\mu_i^{j_3}+\sum_{j_1,j_3\geq0,j_2\geq 1}^{j_1+j_2+j_3=k}
\delta_{q+1}^{\frac{1}{2}}l^{-j_1}
\delta_{q+1}^{\frac{1}{2}}\mu_i^{j_2}\mu_i^{j_3})\\
&+\frac{1}{\mu_i}(\delta_{q+1}\mu_i^k+\cdots+\delta_{q+1}l^{-k})+\delta_{q+1}l^{1-k}\\
\lesssim& \sqrt{K}\delta_{q+1}^{\frac{1}{2}}\mu_i^{k-1}    .
\end{aligned}
\end{equation}
Although only $w_1-w_0$ contains $\Phi$, its contribution is absorbed in the same estimate. \par
Combining \eqref{v0-vq:0 norm} \eqref{v0-Vq} \eqref{v0 w0 norm 2+k} \eqref{vi-vi-1} \eqref{wi-wi-1} and the definition of $\mu_i$ in \eqref{level three definition}, we conclude that

\begin{equation*}
    \begin{aligned}
        \|(V_{q+1}-V_q,W_{q+1}-W_q)\|_0&\lesssim \|(v_0-V_q,w_0-W_q)\|_0+\sum_{i=1}^{\Xi_n}(\|v_i-v_{i-1}\|_0+\|w_i-w_{i-1}\|_0)\\
        &\lesssim  \frac{\sqrt{K}}{ C_*\mu_0}+ \sqrt{K}\delta_{q+1}^{\frac{1}{2}}(\sum_{i=1}^{\Xi_n}\frac{1}{\mu_i})\leq C\left(\frac{1}{\sqrt K}+\frac1{\sqrt K}\delta_{q+1}^{\frac{1}{2}}\right)
\frac{\delta_{q+1}^{\frac{1}{2}}}
{\delta_q^{\frac{1}{2}}\lambda_q}\leq K(\delta_{q+1}^{\frac{1}{2}}+1)\frac{\delta_{q+1}^{\frac{1}{2}}}{\delta_{q}^{\frac{1}{2}}\lambda_q},
    \end{aligned}
\end{equation*}

\begin{equation*}
    \begin{aligned}
        \|(V_{q+1}-V_q,W_{q+1}-W_q)\|_1&\lesssim \|(v_0-V_q,w_0-W_q)\|_1+\sum_{i=1}^{\Xi_n}(\|v_i-v_{i-1}\|_1+\|w_i-w_{i-1}\|_1)\lesssim \sqrt{K}\delta_{q+1}^{\frac{1}{2}}\leq K\delta_{q+1}^{\frac{1}{2}},
    \end{aligned}
\end{equation*}
\begin{equation*}
\begin{aligned}
    \|(V_{q+1},W_{q+1})\|_2&\lesssim\|(v_0,w_0)\|_2+\sum_{i=1}^{\Xi_n}(\|v_i-v_{i-1}\|_2+\|w_i-w_{i-1}\|_2)\lesssim\sqrt{K}\delta_{q+1}^{\frac{1}{2}}(\sum_{i=0}^{\Xi_n}\mu_i)\leq K\delta_{q+1}^{\frac{1}{2}}\lambda_{q+1},
\end{aligned}
\end{equation*}
as long as $K>1$ large enough to cover all the $\lesssim$ . Thus the conclusions \eqref{conclusion 0}, \eqref{conclusion 1} and \eqref{conclusion 2} arrive.\par

\noindent\textbf{4. Estimation of error $D_{q+1}$.}    Recall the definition of $D_{q+1}$ as in \eqref{eq:deficit_q}, the equation on $a_i$ and $\Phi$  in \eqref{D tilde decomposition} and the corrugation in each step in \eqref{corrugation in each step}, we have 
    \begin{align*}
        D_{q+1}=&\overline{A}-\frac{1}{2}\nabla V_{q+1}\otimes \nabla V_{q+1}-\sym \nabla W_{q+1} -\overline{A}*\phi_l+\frac{1}{2}\nabla v_0\otimes \nabla v_0+\sym\nabla( w_0- \Phi) +\sum_{i=1}^{\Xi_n}a_i^2\xi_i\otimes\xi_i
        \\
        =&\sum_{i=1}^{\Xi_n}\left(a_i^2\xi_i\otimes\xi_i-\frac{1}{2}(\nabla v_{i}\otimes \nabla v_{i}-\nabla v_{i-1}\otimes \nabla v_{i-1})+\sym \nabla(\frac{1}{\mu_i}\Gamma_1\nabla v_{i-1}-\frac{1}{\mu_i}\Gamma_2\xi_i)\right)\\
        &+\overline{A} -\overline{A}*\phi_l+\sym\nabla \Phi-\sym\nabla \Phi\\
        =&\sum_{i=1}^{\Xi_n}\left(a_i^2\xi_i\otimes\xi_i-\sym(\frac{1}{\mu_i}\nabla\Gamma_1\otimes\nabla v_{i-1})-\frac{1}{2\mu_i^2}\nabla\Gamma_1\otimes \nabla\Gamma_1+\sym \nabla(\frac{1}{\mu_i}\Gamma_1\nabla v_{i-1}-\frac{1}{\mu_i}\Gamma_2\xi_i) \right)+\overline{A}-\overline{A}*\phi_l
        \\ 
        =&\sum_{i=1}^{\Xi_n}\left(a_i^2\xi_i\otimes\xi_i+
        \frac{1}{\mu_i}\Gamma_1\nabla^2 v_{i-1}-\frac{1}{2\mu_i^2}\nabla\Gamma_1\otimes \nabla\Gamma_1-\frac{1}{\mu_i}\sym (\nabla\Gamma_2\otimes\xi_i) \right)+\overline{A}-\overline{A}*\phi_l
    \end{align*}
Here and throughout the following, we use the shorthand $\Gamma_1$ (resp. $\Gamma_2$) to denote $\Gamma_1(a_i,\mu_ix\cdot\xi_i)$ (resp. $\Gamma_2(a_i,\mu_ix\cdot\xi_i)$).
We denote the $i$-th step error term as
\begin{equation}
\mathcal{E}_i:=a_i^2\xi_i\otimes\xi_i+
        \frac{1}{\mu_i}\Gamma_1\nabla^2 v_{i-1}-\frac{1}{2\mu_i^2}\nabla\Gamma_1\otimes \nabla\Gamma_1-\frac{1}{\mu_i}\sym (\nabla\Gamma_2\otimes\xi_i).    
\end{equation}
Notice that $\nabla\Gamma_k=\partial_s\Gamma_k\nabla a_i+\partial_t\Gamma_k\mu_i\xi_i$. Using the identity $\frac{1}{2}|\partial_t \Gamma_1|^2+\partial_t\Gamma_2=a_i^2$ as in  \eqref{dagger_Gamma}, the coefficients of $\xi_i\otimes\xi_i$ cancel, we have 
\begin{equation*}
\begin{aligned}
\mathcal{E}_i=&a_i^2\xi_i\otimes\xi_i+
        \frac{1}{\mu_i}\Gamma_1\nabla^2 v_{i-1}-\frac{1}{2\mu_i^2}|\partial_s\Gamma_1|^2\nabla a_i\otimes \nabla a_i-\frac{\partial_s\Gamma_1\partial_t\Gamma_1}{\mu_i}\sym (\nabla a_i\otimes\xi_i)     -\frac{1}{2}|\partial_t\Gamma_1|^2\xi_i\otimes\xi_i\\
        &-\partial_t\Gamma_2\xi_i\otimes\xi_i-\frac{\partial_s\Gamma_2}{\mu_i}\sym (\nabla a_i\otimes\xi_i)\\
        =&\frac{1}{\mu_i}\Gamma_1\nabla^2 v_{i-1}-\frac{1}{2\mu_i^2}|\partial_s\Gamma_1|^2\nabla a_i\otimes \nabla a_i-\frac{\partial_s\Gamma_2+\partial_s\Gamma_1\partial_t\Gamma_1}{\mu_i}\sym (\nabla a_i\otimes\xi_i).
\end{aligned}
\end{equation*}
Now using the  estimates of $\Gamma_{1}$ and $\Gamma_2$ in \eqref{Gamma estimates},  $\nabla^2v_{i-1}$ in \eqref{estimates of v_i_k} and  $a_i$ in \eqref{a_i estimates 0}, we have
\begin{equation*}
\begin{aligned}
\|\mathcal{E}_i\|_0\lesssim &\frac{1}{\mu_i}\|\Gamma_1\|_0\|\nabla^2v_{i-1}\|_0+\frac{1}{\mu_i^2}\|\partial_s\Gamma_1\|_0^2\|\nabla a_i\|_0^2+\frac{1}{\mu_i}(\|\partial_s\Gamma_2\|_0+\|\partial_s\Gamma_1\|_0\|\partial_t\Gamma_1\|_0)\|\nabla a_i\|_0\\
\lesssim&\frac{1}{\mu_i}\delta_{q+1}^{\frac{1}{2}}\cdot \delta_{q+1}^{\frac{1}{2}}\mu_{i-1}+\frac{1}{\mu_i^2}(\delta_{q+1}^{\frac{1}{2}}l^{-1})^2+\frac{1}{\mu_i}\delta_{q+1}^{\frac{1}{2}}\cdot \delta_{q+1}^{\frac{1}{2}}l^{-1}\\
\lesssim& C_*^2\,\delta_{q+1}\frac{\mu_{i-1}}{\mu_i}.
\end{aligned}    
\end{equation*}
Due to the high regularity of $A$ and the estimate \eqref{f-f*phi}, we have $\|\overline{A}-\overline{A}*\phi_l\|_0\lesssim\|\overline{A}\|_1l\lesssim \|A\|_1l,$    
    thus 
    \begin{equation*}
        \|D_{q+1}\|_0=\|\overline{A}-\overline{A}*\phi_l+\sum_{i=1}^{\Xi_n}\mathcal{E}_i\|_0\lesssim \|A\|_1l+ C_*^2\delta_{q+1}\sum_{i=1}^{\Xi_n}\frac{\mu_{i-1}}{\mu_i}.
    \end{equation*}
Recall the definition $\mu_i=\mu_0^{1-\frac{i}{\Xi_n}}\mu_{\Xi_n}^{\frac{i}{\Xi_n}}$ as in \eqref{level three definition}, if we require the $a,b,c$ to satisfy

\begin{equation}\label{dagger_delta_q+2}
    l\leq \frac{ \delta_{q+2}}{\EC},\quad \frac{\mu_0}{\mu_{\Xi_n}}\leq(\frac{\delta_{q+2}}{\EC\delta_{q+1}})^{\Xi_n},
\tag{$\dagger_{\delta_{q+2}}$}\end{equation}
we will then get 
\begin{equation*}
    \|D_{q+1}\|_0\lesssim \|A\|_1 l+ C_*^2\delta_{q+1}(\frac{\mu_0}{\mu_{\Xi_n}})^{\frac{1}{\Xi_n}}\leq ( C_*^2+\|A\|_1)\frac{\delta_{q+2}}{\EC}.
\end{equation*}
Hence to get the conclusion  $\|D_{q+1}\|_0\leq  \delta_{q+2}$ \eqref{conclusion 3}, we require $\EC $ large enough to cover the $\lesssim$ along the estimates. Hereby we  conclude the proof.
\end{proof}

\subsection{ Conclusion}
 We are going to determine the parameters $a>1$, $1<b<2$, $c>0$, and compute the upper bound of $\alpha$ such that $(V_q,W_q)$ converges in $C^{1,\alpha}$. The requirements to be satisfied are \eqref{dagger_C_*_LEQ} \eqref{dagger_delta_q+2}:
\begin{equation*}
    C_* \mu_0\leq \mu_1,\quad \frac{1}{C_* \mu_0}\leq \frac{ \delta_{q+2}}{\EC},\quad\frac{\mu_0}{\mu_{\Xi_n}}\leq (\frac{\delta_{q+2}}{\EC \delta_{q+1}})^{\Xi_n},
\end{equation*}
   Recall by definition \eqref{level three definition} $\mu_0=K\delta_q^{\frac{1}{2}}\lambda_q{\delta_{q+1}^{-\frac{1}{2}}}= K\cdot a^{(\frac{b-1}{2}+c)b^q}$,  $\mu_{\Xi_n}=\lambda_{q+1}=a^{cb^{q+1}}$, $\mu_1=\mu_0^{\frac{\Xi_n-1}{\Xi_n}}\mu_{\Xi_n}^{\frac{1}{\Xi_n}}= (Ka^{(\frac{b-1}{2}+c)b^q})^{1-\frac{1}{\Xi_n}}(a^{cb^{q+1}})^{\frac{1}{\Xi_n}}$, then the above three inequalities become
\begin{align}
   C_* ^{\Xi_n} K&\leq a^{(c-\frac{1}{2})(b-1)b^q},\label{step3 ineq 1}\\
   \EC a^{b^{q+2}}&\leq  KC_*\cdot a^{(\frac{b-1}{2}+c)b^q},\label{step3 ineq 2}\\
    K\cdot a^{(\frac{1}{2}-c)(b-1)b^q}&\leq \EC^{-\Xi_n}a^{\Xi_n(b-b^2)b^q}.\label{step3 ineq 3}
\end{align}

For fixed $b,c$, the constants $C_*,K,\EC$ can be absorbed by choosing $a$ sufficiently large. Thus the above requirements are ensured by

\begin{equation*}
    c>b^2-\frac {b}{2}+\frac{1}{2},\qquad
c>\frac{1}{2}+\Xi_n b.
\end{equation*}
Taking $b>1$ sufficiently close to $1$ and then $c>\Xi_n+\frac{1}{2}$ sufficiently close to $\Xi_n+\frac{1}{2}$, the requirements are satisfied.\par

The last remaining requirement for running the induction is to ensure that the initial condition \eqref{norm1 assumption}, namely $\|(V_q,W_q)\|_1\leq \sqrt{K}$,  is satisfied for all $q\geq 0$ . From \eqref{definition V0 W0}, we observe that $\|(V_0,W_0)\|_1\leq C\delta_1^{\frac{1}{2}}  <\sqrt{K}/2$. Recalling  \eqref{conclusion 1} and the definition  $\delta_{q}=a^{-b^q}$ from \eqref{definition of delta and lambda}, we note that for any $K>1$ and $b>1$, there exists a large enough $a$ such that 
\begin{equation} 
\|(V_{q+1},W_{q+1})-(V_{q},W_{q})\|_1\leq K a^{-b^{q+1}/2}<\sqrt{K}\cdot(\frac{1}{2})^{q+1}\quad \text{for any } q\geq 0.
\end{equation} 
It then follows  that \eqref{norm1 assumption} holds for all $q\geq0$.

   Now consider 
   \begin{equation*}
   \begin{aligned}
       \|(V_{q+1},W_{q+1})-(V_q,W_q)\|_{1+\alpha}&\leq \|(V_{q+1},W_{q+1})-(V_q,W_q)\|_1^{1-\alpha}\|(V_{q+1},W_{q+1})-(V_q,W_q)\|_2^\alpha\\
       &\leq K \delta_{q+1}^\frac{1}{2}\lambda_{q+1}^\alpha\leq Ka^{(c\alpha-\frac{1}{2})b^{q+1}},
   \end{aligned}
   \end{equation*} 
   hence for $a$ sufficiently large,
    the sequence $(V_q,W_q)$ is Cauchy in $C^{1,\alpha}$ provided that
 \begin{equation}\label{final}
       c\alpha-\frac{1}{2}<0.
   \end{equation}

If we take $b>1$ close enough to $1$, $c>\Xi_n+\frac{1}{2}$ close enough to $\Xi_n+\frac{1}{2}$, then any $\alpha<\frac{1}{1+2\Xi_n}$ would satisfy \eqref{final}. Denote $(\underline{v},\underline{w}):=\lim_{q\rightarrow\infty}(V_q,W_{q})$, and recall the final solution to \eqref{ddagger} would be $(v,w)=(\delta_1^{-\frac{1}{2}}\tau^{\frac{1}{2}}\underline{v},\delta_1^{-1}\tau \underline{w})$ as discussed in Section \ref{section:setp 1}.
   For any $\epsilon>0$, using $1<b<2$, $c>\Xi_n+\frac{1}{2}$, the conclusion \eqref{conclusion 0}, and definitions $\delta_q=a^{-b^q}$, $\lambda_q=a^{cb^q}$ from \eqref{definition of delta and lambda}, we obtain
   \begin{align*}
       \|v-v^\flat\|_0&\leq \|\delta_1^{-\frac{1}{2}}\tau^{\frac{1}{2}}V_0-v^\flat\|_0+\sum_{q\geq 0}\delta_1^{-\frac{1}{2}}\tau^{\frac{1}{2}}\|V_{q+1}-V_q\|_0\\
       &\leq 2\tau^{\frac{1}{2}}\sum_{q\geq0}\frac{\delta_{q+1}^{\frac{1}{2}}}{\delta_1^{\frac{1}{2}}\delta_{q}^{\frac{1}{2}}\lambda_q}\leq 2\tau^{\frac{1}{2}}\sum_{q\geq0} a^{\frac{b}{2}}(a^{\frac{1-b-2c}{2}})^{b^q}\\
       &\leq 2\tau^{\frac{1}{2}}\sum_{q\geq0} a \cdot(a^{-\frac{2\Xi_n+1}{2}})^{1+q(b-1)}< 2\tau^{\frac{1}{2}} a^{-\Xi_n+\frac{1}{2}}\frac{1}{1-a^{-\frac{2\Xi_n+1}{2}(b-1)}}
       <\epsilon
    \end{align*}
   for some $a$ large enough, depending on $b$ and $\epsilon$.
   Similarly, $\|w-w^\flat\|_0\leq  
2\tau\sum_{q\geq0}
a^b\left(a^{\frac{1-b-2c}{2}}\right)^{b^q}<2\tau a^{-\Xi_n+1}
<\epsilon$ for $a$ large enough.
 Therefore,  Theorem \ref{main theorem} is proved provided that $a$ is chosen sufficiently large, depending on $b>1$, $c>\Xi_n+\frac{1}{2}$,  $C_*>1$, $K>1$, $\EC>1$, $\tau$ and $\epsilon>0$.

\bibliographystyle{abbrv}
\bibliography{ref}

\Addresses

\end{document}